\tikzset{vert/.style={circle,fill,inner sep=0,
    minimum size=0.15cm,draw}, new/.style={}}
\renewcommand{\comment}[1]{}
\newcommand{\eq}{\begin{equation}}
\newcommand{\en}{\end{equation}}
\newcommand{\rr}{\mathbb{R}}
\newcommand{\NN}{\mathbb{N}}
\newcommand{\abs}[1]{\left\lvert #1 \right\rvert}
\newcommand{\mcal}[1]{\mathcal{#1}}
\newcommand{\nin}{\noindent}
\newcommand{\tbf}{\textbf}
\newcommand{\Exp}{\text{Exp}}
\newcommand{\Ww}{\mathcal{W}}
\newcommand{\bad}{\text{birth-and-death }}
\newcommand{\E}{\mathrm{E}}
\renewcommand{\P}{\mathrm{P}}
\newcommand{\leb}{\mathrm{Leb}}
\newcommand{\Cov}{\mathrm{Cov}}
\newcommand{\cov}{\mathrm{Cov}}
\newcommand{\Var}{\mathrm{Var}}
\newcommand{\stime}{\overline{\sigma}}
\newcommand{\tchi}{\widetilde{\chi}}
\newcommand{\bG}[2]{\overleftarrow{G}_{#1}(#2)}
\newcommand{\bX}[4][s]{\overleftarrow{M}_{#2\ifthenelse{\equal{#3}{}}{}{,#3}}^{(#1)}(#4)}
\newcommand{\bGraph}[2]
{\ifthenelse{\equal{#1}{}}{\overleftarrow{\Gamma}(#2)}
{\overleftarrow{\Gamma}_{#1}(#2)}}
\newcommand{\gevent}{\mathcal{E}}
  \newcommand{\Cy}[2][\infty]{C_{#2}^{(#1)}}
\newcommand{\e}{\varepsilon}
\newcommand{\tr}{\mathrm{tr}}
\newcommand{\hQ}{\widehat{Q}}
\newcommand{\hq}{\widehat{q}}
\newcommand{\CNBW}[2][\infty]{\mathrm{CNBW}_{#2}^{(#1)}}
\newcommand{\divides}{\mid}
\begin{document}

\theoremstyle{plain}
\newtheorem{thm}{Theorem}
\newtheorem{lemma}[thm]{Lemma}
\newtheorem{prop}[thm]{Proposition}
\newtheorem{cor}[thm]{Corollary}

\theoremstyle{definition}
\newtheorem{defn}{Definition}
\newtheorem{asmp}{Assumption}
\newtheorem{notn}{Notation}
\newtheorem{prb}{Problem}

\theoremstyle{remark}
\newtheorem{rmk}{Remark}
\newtheorem*{rmk*}{Remark}
\newtheorem{exm}{Example}
\newtheorem{clm}{Claim}

\title[random transposition dynamics]{The random transposition dynamics on random regular graphs and the Gaussian free field}

\author{Shirshendu Ganguly and Soumik Pal}
\address{Department of Mathematics\\ University of Washington\\ Seattle, WA 98195}
\email{sganguly@math.washington.edu, soumikpal@gmail.com}

\keywords{Random regular graphs, Chinese Restaurant process, Random transpositions, virtual permutations, Gaussian free field, minor process, Dyson Brownian motion}

\subjclass[2000]{60B20, 60C05}

\thanks{Soumik's research is partially supported by NSF grant DMS-1308340}

\date{\today}

\begin{abstract} 
A single permutation, seen as union of disjoint cycles, represents a regular graph of degree two. Consider $d$ many independent random permutations and superimpose their graph structures. It is a common model of a random regular (multi-) graph of degree $2d$. We consider the following dynamics. The dimension (i.e. size) of each permutation grows by coupled Chinese Restaurant Processes, while in ÔtimeÕ each permutation evolves according to the
random transposition chain. Asymptotically in the size of the graph one observes a remarkable evolution of short cycles and linear eigenvalue statistics in dimension and time. In dimension, it was shown by Johnson and Pal \cite{JP14} that cycle counts are described by a Poisson field of Yule processes. Here, we give a Poisson random surface description in dimension and time of the limiting cycle counts for every $d$. As $d$ grows to infinity, the fluctuation of the limiting cycle counts, across dimension, converges to the Gaussian Free Field. In time this field is preserved by a stationary Gaussian dynamics. The laws of these processes are similar to eigenvalue fluctuations of the minor process of a real symmetric Wigner matrix whose coordinates evolve as i.i.d. stationary stochastic processes.
\end{abstract}

\maketitle

\section{Introduction} 

We begin with a heuristic description of our model and the main results. Precise formulations are given in the following subsection. Consider $d$ independent random permutations on $n$ labels $[n]:=\left\{ 1,2,\ldots,n \right\}$. Every permutation has a corresponding permutation matrix whose entries are zero or one. We add all the $d$ matrices and further add it to its own transpose. This produces a symmetric matrix such that the sum of entries in every row is exactly $2d$. The matrix can be thought of as the adjacency matrix of a regular (multi)graph that allows loops and multiple edges. The model of the random graph generated by this procedure  is called the permutation model of a random regular graph of degree $2d$ on $n$ vertices. We will denote this random graph by $G(n,2d)$. For more on the recent uses and applications of the permutation model see \cite{friedmane2, friedmanalon}. 

Our objective in this paper is to consider stochastic processes of such random regular graphs indexed by two parameters: size (or `order' or `dimension') and time. The inspiration comes from a standard model of random matrix theory: a Gaussian Wigner matrix called the Gaussian orthogonal ensemble (GOE). In that model, at a given time and dimension $n$, one has an $n\times n$ symmetric matrix of independent mean-zero Gaussian upper triangular entries (all entries have variance one, except the diagonal elements which have variance two). The matrix grows to dimension $(n+1)\times (n+1)$ by adding an independent $(n+1)$th row of independent Gaussian entries and, hence, a column, by symmetry. This gives us a matrix-valued process indexed by $n$ (called the minor process). If, now, the individual entries are replaced by independent Gaussian stochastic processes (say, stationary Ornstein-Uhlenbeck diffusions), then we have a field of random matrices indexed by size and time that is known to display remarkable properties.

The corresponding field of random regular graphs is non-trivial since the entries of the adjacency matrix are not independent. However, each graph is constructed using random permutations. One can borrow well-known dynamics on every random permutation: the Chinese Restaurant Process (CRP) to grow its size and the random transposition Markov chain to evolve it in time. Dynamics on individual permutations must be coupled to ensure that the same set of labeled vertices are preserved in dimension and time.

To do this, imagine observing an infinite sequence $\{\pi_1,\pi_2,\ldots \}$ of independent random permutations of $\NN$. By a `random permutation' of $\NN$ we mean the following.
The $i$th element $\pi_i$ is a sequence of permutations visualized as a tower. The $n$th level of this tower is a random permutation of $[n]$ that grows in $n$ according to the CRP. This, in turn, produces an array of permutations matrices doubly indexed by $(i,n)$. Consider the sequence of partial sums of these permutation matrices along $i$ for every $n$. Symmetrize every matrix by adding it to its transpose. Consider the $d$th partial sum and consider the process of matrices growing in $n$. The corresponding sequence of graphs produces a coupling of $G(n,2d)$ for all values of $n$.  


We now describe the evolution in time. Attach each element of $\NN$ with a sequence of i{.}i{.}d{.} exponential clocks. When any clock rings the corresponding element chooses a `uniform' element from $\NN$ and every permutation tower gets multiplied on the left by the transposition of the pair. Clearly this statement as it is does not make sense since there are countably infinitely many elements. However, as we show later, there is a way to make this precise. The evolving family of towers of permutations now produces a family $G(n,2d,s)$ of regular graphs indexed by the triplet (order, degree, time). Consider the following graph statistics (i) counts of cycles of a fixed size, (ii) polynomial linear eigenvalue statistics. We study their functional limits for large order and suitably scaled time both for  fixed $d$ and as $d$ tends to infinity.  

For very large order, the cycle counts form an approximate polynomial basis for the linear eigenvalue statistics which makes (i) and (ii) asymptotically equivalent. We provide a precise process description of the joint evolution of cycles of various sizes across dimension (i.e., order) and time. Informally, suppose $\left( N_k(t,s),\; k \in \NN \right)$ denote the number of $k$ cycles in the graph for \textit{very large} order $t$ and \textit{very small} time $s$, then its joint law is approximately that of countably many random Poisson surfaces (one per $k$) given by Yule processes in dimension and approximate \bad chains in time. The dimension part of it is described in \cite{JP14}. As $d$ goes to infinity, the fluctuation of the above infinite dimensional surface converges to a product of countably many two dimensional Gaussian random surfaces (one for each $k$) each of which has stationary one dimensional marginals. The covariances of these random surfaces coincide with that of Chebyshev polynomial eigenvalue statistics of Wigner matrices, growing in dimension as the minor process, while each entry (up to symmetry) moves in time as independent processes. See \cite{Bor2, Bor1}. It follows that the fluctuation of the height function of eigenvalue distribution (for every fixed time) is distributed approximately as the Gaussian Free Field (GFF) on the upper half plane and zero boundary condition, whose law is kept preserved in time  by the random transposition Markov chain.

\subsection{Formal description of the model} A part of the description of our model already appears in  \cite{JP14} and \cite{Johnson14}, where the reader can find more references on the subject. Consider a permutation $\pi^{(n)}$ on the $n$ labels $[n]:=\{ 1,2, \ldots, n \}$. Consider the permutation matrix corresponding to $\pi^{(n)}$, add it to its transpose. The resulting matrix can be thought of as the adjacency matrix of a vertex labeled $2$-regular graph that allows multiple edges and loops.

We will now grow this graph in dimension and transform it in time. In dimension this will be done by the Chinese restaurant Process (CRP). By a tower of random permutations we mean a sequence of random permutations $(\pi^{(n)}, n\in \NN)$ such that
\begin{enumerate}
\item[(i)] $\pi^{(n)}$ is a uniformly distributed random permutation of $[n]$ for each $n$, and
\item[(ii)] for each $n$, if $\pi^{(n)}$ is written as a product of cycles then $\pi^{(n-1)}$ is derived from $\pi^{(n)}$ by deletion of the element $n$ from its cycle. 
\end{enumerate}
The CRP is a Markov chain that reverses the above procedure by building $\pi^{(n)}$ from $\pi^{(n-1)}$. See  \cite[Section 3.1]{Pit}. 
 
Now suppose we construct a countable collection $\{ \Pi_d,\, d \in \NN \}$ of towers of random permutations. We will denote the permutations in $\Pi_d$ by $\left\{ \pi_d^{(n)},\, n \in \NN\right\}$. Then it is possible to model every possible $G(n, 2d)$ by adding the permutation matrices (and their transposes) corresponding to $\left\{ \pi^{(n)}_j,\, 1\le j \le d \right\}$. In what follows we will keep $d$ fixed and consider $n$ as a growing parameter. Thus, $G_n$ will represent $G(n,2d)$ for some fixed $d$. Here and later, $G_0$ will represent the empty graph. 
We construct a continuous-time version of a graph-valued Markov chain by inserting new vertices
into $G_n$ with rate $n+1$.  Formally,
define independent \label{page:poissonization}
times $T_i\sim\Exp(i)$, exponential distribution with rate $i$, and let 
\begin{align}\label{eq:continuoustimeexp}
  M_t=\max\left\{m\colon\ \sum_{i=1}^mT_i\leq t\right\},
\end{align}
and consider the continuous-time Markov chain
$G_{M_t}$ for $t\in [0, \infty)$. We are now going to abuse our notation and define a two-parameter family of graphs $G(t,s)$, for nonnegative parameters $t$ and $s$. The second index $s$ in $G(t,s)$ refers to time, while the first index $t$ refers to dimension. This should not produce confusion with the notation $G(n,2d)$, where the parameters are integers and $d$ is fixed. We start by defining $G(t,0)=G_{M_t}$ for all $t\ge 0$. 

We now describe the movement in time. Fix some positive $T$ and suppose $M_T=n$. Let $\tau_{ij}$ be the transposition $(i,j)$. Consider the finite set of transpositions of elements in $[n]$. Consider the Markov chain that independently chooses a uniform random transposition and multiplies to each permutation $\pi_i^{(n)}$ on the left. This is the well-known random transposition Markov chain which keeps the joint law of $d$ independent permutations $\left\{ \pi_i^{(n)},\; i\in[d]  \right\}$ invariant. 

As before, we will actually modify the above to a continuous time chain. Suppose we attach a sequence of i.i.d. exponential one clocks with every label in $[n]$. As the first clock rings, the corresponding element (say $I_1$) chooses a uniformly random element in $[n]$ except itself (say $J_1$) and we multiply every permutation $\left\{ \pi_j^{(n)},\; j\in[d]\right\}$ on the left by $\tau_{I_1J_1}$. With every successive ring one takes a successive product $\tau_{I_k J_k} \cdot \ldots \cdot \tau_{I_1 J_1} \cdot \pi^{(n)}_j$.

Thus every possible transposition occurs with rate $2/(n-1)$ and we successively multiply them on the left. After time $s$, let $\sigma_s$ denote the (left) product of successive transpositions so far. Then, each permutation is now modified to $\sigma_s \cdot \pi_j^{(n)}$. The graph induced by these permutations will be denoted by $G(T,s)$.

Given initial permutations $\left\{\pi^{(n)}_i,\; i \in [d]\right\}$ and $\sigma_s$ for some $s\ge 0$, we  define $G(t,s)$ for $t\in [0, T)$ by  successively removing elements in the order 
\eq\label{eq:orderremoval}
\sigma_s(n), \sigma_s(n-1), \ldots, \sigma_s(1).
\en
More precisely, recall the sequence of exponential times $T_1, T_2, \ldots, T_n$ that defined $G(T,0)$ in \eqref{eq:continuoustimeexp} with $M_T=n$. Consider the CRP backwards at time $0$ as it removes vertices from each permutation. When it removes vertex $i$ at time $0$, simultaneously remove vertex $\sigma_s(i)$ from at time $s$. This is the CRP running backwards on the relabeled vertices \eqref{eq:orderremoval}. Hence, the law of the unlabeled graph-valued process in dimension remains unchanged along time. Additionally, by the above coupling every $G(t,s)$, $s\ge 0$, has the same number of vertices for all $0\le t \le T$, although their labels might be different. 

\begin{rmk*}
Recall the comment made in the introduction about the difficulty in defining transposition Markov chain on a permutation on the entire $\NN$.
We get around this problem by defining the transposition chain for a permutation of a large dimension $T$ and then project back to smaller dimensions by running the CRP backwards. We will later take $T\to \infty$ to construct a substitute for the entire $\NN$. 
\end{rmk*}

For every $T >0$ the above construction produces a doubly-indexed family of graphs $\left\{ G(t,s),\; 0\le t \le T,\; s\ge 0  \right\}$, where $t$ represents dimension and $s$ represents time. We intend to study the asymptotic behavior of short cycles and linear eigenvalue statistics of this process as $T$ grows to infinity. For a fixed time, the asymptotic law of the process in dimension has been already studied in \cite{JP14}, to be described in the next section. The main focus of this work is to study the joint evolution of the graph statistics in dimension and time and to draw parallel with results in \cite{Bor2}. 

\subsection{Notation and Definitions} We are interested in the dynamics of cycles of a given size. However, not all cycles of a fixed size behave identically. To obtain a nice Markovian description, we need to classify cycles by the permutations that produce its edges. The following concepts are recalled from \cite{JP14}.

   \begin{figure}[t]
      \begin{center}
        \begin{tikzpicture}[scale=1.5,vert/.style={circle,fill,inner sep=0,
              minimum size=0.15cm,draw},>=stealth]
          \node[vert] (s0) at (270:1) {};
            \node[vert] (s1) at (330:1) {};
            \node[vert] (s2) at (30:1) {};
            \node[vert] (s3) at (90:1) {};
            \node[vert] (s4) at (150:1) {};
            \node[vert] (s5) at (210:1) {};
            \draw[thick,->] (s0) to node[auto,swap] {$\pi_2$} (s1);
            \draw[thick,<-] (s1) to node[auto,swap] {$\pi_1$} (s2);
            \draw[thick,->] (s2) to node[auto,swap] {$\pi_2$} (s3);
            \draw[thick,->] (s3) to node[auto,swap] {$\pi_1$} (s4);
            \draw[thick,->] (s4) to node[auto,swap] {$\pi_2$} (s5);
            \draw[thick,<-] (s5) to node[auto,swap] {$\pi_3$} (s0);
        \end{tikzpicture}
      \end{center}
    \caption{A cycle whose word
    is the equivalence class of
    $\pi_2\pi_1^{-1}\pi_2\pi_1\pi_2\pi_3^{-1}$ in $\Ww_6/D_{12}$. Here $h(w)=1$, $b(w)=2$, $c(w)=0$.}
    \label{fig:cycleword}
  \end{figure}
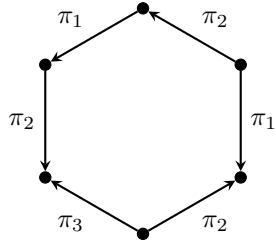

Imagine the graph $G_n$ as a directed, edge-labeled
  graph in a natural way.  For convenience, drop superscripts
  and let $\pi_l=\pi^{(n)}_l$.
  If $\pi_l(i)=j$, then we imagine this edge to be directed from $i$ to $j$ and to be
  labeled by $\pi_l$.

  Consider a walk on $G_n$ (i.e., a sequence of neighboring vertices) and write
  down the label of each edge as it is traversed, putting
  $\pi_i$ or $\pi_i^{-1}$ according to the direction
  we walk over the edge. Any such sequence of $\pi_i$ or $\pi^{-1}_i$ will be called a word. 
 
   We call a walk \emph{closed} if it starts and ends at the same vertex,
  and we call a closed walk a cycle if
  it never visits a vertex twice until the very last one, 
  and it never traverses an
  edge more than once in either direction.  
  Thus a word $w=w_1\cdots w_k$ that corresponds to traversing a cycle
  is cyclically reduced, i.e., $w_i\neq w_{i+1}^{-1}$ for all $i$, 
  considering $i$ modulo
  $k$. 

  Let $\Ww_k$ denote the set
  of cyclically reduced words of length $k$. We identify
  elements of $\Ww_k$ that differ only by rotation and inversion
  and denote the resulting
  set by $\Ww_k/D_{2k}$, where $D_{2k}$ is the dihedral group acting
  on the set $\Ww_k$ in the natural way. Let $\Ww'=\bigcup_{k=1}^{\infty}\Ww_k/D_{2k}$, and let
  $\Ww'_K=\bigcup_{k=1}^K\Ww_k/D_{2k}$.
For each $k$-cycle in $G_n$ we associate an element in $\Ww_k/D_{2k}$ formed by starting the walk
  at any point in the cycle and walk in either of two directions. Two cycles are 
  considered equivalent if they both map to the same equivalent class of words.

  \begin{defn}[Properties of words]\label{defn:propertiesw}
     For any $k$-cycle
     in $G_n$, the element of $\Ww_k/D_{2k}$ given by walking
     around the cycle is called the \emph{word} of the cycle  (see 
     Figure~\ref{fig:cycleword}).
  For any word $w$, let $\abs{w}$ denote the length of $w$.
  Let $h(w)$ be the largest number $m$ such that
  $w=u^m$ for some word $u$.  If $h(w)=1$, we call $w$ \emph{primitive}.
  For any $w\in\Ww_k$, the orbit of $w$ under the action 
  of $D_{2k}$ contains $2k/h(w)$ elements, a fact which we will frequently use.
 The sign of a letter in a word is $+1$ or $-1$ depending on whether the letter is $\pi_i$ or $\pi_i^{-1}$. 
 Let $b(w)$ denote the number of letters in $w$ whose sign is the same as the letter appearing right before it.
 In other words, $\abs{w}-b(w)$ is the number of successive sign changes in $w$.   
  Let $c(w)$ denote the number of pairs of double letters in $w$, i.e.,
  the number of integers $i$ modulo $|w|$ such that $w_i=w_{i+1}$.
  
  For example, if $w=\pi_1\pi_1\pi_2^{-1}\pi_2^{-1}\pi_1$, then $b(w)=2$ and $c(w)=3$. See Figure \ref{fig:cycleword} for another example. We will consider $|\cdot|$, $h(\cdot)$, $b(\cdot)$ and $c(\cdot)$ as
  functions on $\Ww_k/D_{2k}$, since they are invariant
  under cyclic rotation and inversion.
  \end{defn}
  
  To more easily refer to words in $\Ww_k/D_{2k}$, choose some
  representative $w_1\cdots w_k\in\Ww_k$ for
  every $w\in\Ww_k/D_{2k}$.  Based on this, we will often
  think of elements of $\Ww_k/D_{2k}$ as words instead of equivalence
  classes, and we will make statements about the $i$th letter
  of a word in $\Ww_k/D_{2k}$.  For $w=w_1\cdots w_k\in\Ww_k/D_{2k}$,
  let $w^{(i)}$ refer
  to the word in $\Ww_{k+1}/D_{2k+2}$ given by
  $w_1\cdots w_i w_i w_{i+1}\cdots w_k$.  We refer to this
  operation as \emph{doubling} the $i$th letter of $w$.
  A related operation is to 
  \emph{halve} a pair of double letters, for example producing
  $\pi_1\pi_2\pi_3\pi_4$ from $\pi_1\pi_2\pi_3\pi_4\pi_1$. 
  Since we apply these operations
  to words identified with their rotations, we do not need
  to be specific about which letter of the pair is deleted.

  
 Finally, we adopt the following convention regarding words. As our permutations get modified by multiplication with transpositions, we do not change the notation of the letters. That is, at time zero, the letters are $\left\{ \pi_i, \pi_i^{-1},\; 1\le i\le d  \right\}$ and they remain so for all time $s$ regardless of the fact that each $\pi_i$ is now modified to $\sigma_s \cdot \pi_i$. Therefore, the edge labels due to a word, say $\pi_1\pi_2^{-1}\pi_3\pi_1$, at any time $s$ should be understood as being given by the resulting permutation at time $s$.

Consider a word $w \in \Ww_k/D_{2k}$, and let $C_w(t,s)$ denote the number of cycles with word $w$ that exists in $G(t,s)$. We can now formulate a regime where the limit of the processes $C_w(\cdot,\cdot)$ can be described. Fix some $T >0$ and consider the doubly indexed process $\left(  C_w\left( (T+t)_+, s  \right),\; w\in \Ww', \; t\le 0,\; s\ge 0   \right)$ where $x_+:=\max(x,0)$. 

We will take limit as $T$ goes to infinity to get a process 
\eq\label{eq:indexlimit}
\left(  N_w(t, s),\; w\in \Ww',\; t\le 0, \; s\ge 0 \right)
\en
which is described below. We imagine that in dimension this process is indexed by the negative half-line with a \textit{front} at zero. In dimension, this process will be stationary. Recall the following Markov chain from \cite[Lemma 13]{JP14}. 

\begin{defn}[The halving chain]\label{defn:mconwords}
Consider a time homogenous Markov chain with RCLL paths on the state space $\Ww' \cup \{\Delta \}$, where $\Delta$ is the cemetery. Let $u\in \Ww_{k-1}/D_{2k-2}$ and $w\in \Ww_k/D_{2k}$ be two words such that $u$ can be obtained from $w$ by halving $j$ different pairs of letters. The transition kernel of the chain is described below.  
\begin{enumerate}[(i)]
\item The chain jumps from $w$ to $u$ at rate $j$.  
\item The chain jumps from $w$ to $\Delta$ (i.e., gets killed) at rate $\left(  \abs{w} - c(w) \right)$.
\end{enumerate}
\end{defn}
The following definition encapsulates the following simple idea. At dimension $0$, the number of cycles with word $w$ is a \bad chain running in stationarity. Once born, each cycle, looked backward in dimension performs the halving chain. If the same cycle exists at two different time points, then we observe an identical backward path. Different cycles behave independently.
\begin{defn}[The limiting process]\label{defn:limitpoisson}
Consider a Poisson point process (PPP) $\chi$ on $(-\infty, \infty) \times [0, \infty) \times \Ww'$ with an intensity measure that can be described in the following way. For any word $w\in \Ww'$, atoms appear on $(-\infty, \infty) \times [0, \infty)\times \{w\}$ with a rate given by the product of Lebesgue measure on $(-\infty, \infty)$, the exponential probability measure of rate $2b(w)$ on $[0, \infty)$, and the delta mass $2b(w)/h(w)\delta_w$. 

Consider an extension of this probability space to support independent halving chains starting from every atom of $\chi$. For an atom labeled $(z,v,w)$, we will call this chain $X_{z,v,w}(u)$, where $u\ge 0$ is the common `time' parameter for these chains. For $t\le0, s\ge 0$ and $\bar\omega\in \Ww'$, define the collection of random variables
\eq\label{eq:definecount}
N_{\bar\omega}(t,s)\stackrel{\Delta}{=} \sum_{(z,v,w)\in \Lambda^*(t,s)} 1\left\{ X_{z,v,w}(-t) = \bar\omega  \right\}. 
\en
Here $\Lambda^*(t,s)$ is the collection of all atoms $(z,v,w)$ in $\chi$ such that $z \le s \le z+v$. This is what we will refer to as the limiting process or the limiting field. 
\end{defn} 
The birth of a cycle is captured by the first coordinate of any atom of $\chi$, the second notes its lifetime, while the third is the word of the cycle. \\

We are now ready to formally state the main results in this paper.
\section{Main Results}
Our first result describes marginal cycle counts. Here and below, the topology of weak convergence is a natural generalization of the Skorokhod topology in higher dimensions as described in \cite{neuhaus71}. We will provide more details later. 

\begin{thm}\label{prop:weakconv}
Fix any $T_0, S_0 > 0$. As $T$ tends to infinity, the cycle counting field,
\[
\left( C_w(T+t,s), \quad (t,s) \in [-T_0,0]\times [0, S_0], \; w\in \Ww'\right),
\]
converges weakly in to the field $\left( N_w(\cdot, \cdot),\; w\in \Ww'    \right)$ defined in Definition \ref{defn:limitpoisson}. 
\end{thm}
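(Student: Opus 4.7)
The plan is to extend the dimension-only convergence of \cite{JP14} to the joint dimension--time field by combining the stationarity of the random transposition chain with a new computation of the time-direction death rate of individual cycles. As in \cite{JP14}, I would first restrict to words of length at most $K$ and later let $K \to \infty$; the contribution of longer cycles is controlled uniformly in $T$ by a first-moment estimate. The foundational observation is that the random transposition chain is stationary on the uniform measure over $d$-tuples of permutations, so for every fixed $s \ge 0$ the tuple $\{\sigma_s \cdot \pi_i^{(M_T)} : i \in [d]\}$ is again uniformly distributed. Combined with the coupling that runs the backwards CRP on the relabeled vertices $\sigma_s(n), \ldots, \sigma_s(1)$, this implies that the $s$-slice $(C_w(T + t, s))_{t, w}$ has the same distribution as the $s = 0$ slice, so marginal convergence at each $s$ to $N_w(\cdot, s)$ follows directly from \cite{JP14}.

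For the joint dependence in $(t,s)$, the main new ingredient is the asymptotic destruction rate of a fixed cycle in the time coordinate. A transposition $\tau_{IJ}$ acting on the left of $\pi_l$ alters exactly the two edges labeled $\pi_l$ whose heads are $I$ or $J$. For a fixed $k$-cycle $c$ with word $w$ and vertex set $V(c)$, I would enumerate the triples $(I, J, l)$ for which the transposition breaks $c$ as an unlabeled subgraph carrying the same word. A position-by-position analysis along $w$ shows that, to leading order, the relevant transpositions are indexed by same-sign consecutive letter pairs in $w$; combined with the per-transposition rate $2/(n-1)$ and the total rate $n$, the destruction rate of $c$ is asymptotically $2 b(w)$, matching the exponential lifetime of atoms in Definition \ref{defn:limitpoisson}. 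Stationarity of the dimension marginal then forces the $s$-birth rate of new cycles of word $w$ to equal $2b(w)/h(w)$ per unit dimension, reproducing the PPP intensity.

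With the rates identified, joint finite-dimensional convergence at points $(t_j, s_j)_{j = 1}^m$ is established by factorial moments. Expanding
\[
E \prod_{j} \bigl( C_{w_j}(T + t_j, s_j) \bigr)_{k_j}
\]
as a sum over ordered tuples of distinct cycles (one per index $j$) and adapting the decoupling argument of \cite[Section 4]{JP14} by inserting a factor $e^{-2 b(w)(s' - s)}$ whenever a single cycle is required to persist between two times, I can show these moments converge to those of the PPP in Definition \ref{defn:limitpoisson}. Tightness in the multi-parameter Skorokhod topology of \cite{neuhaus71} reduces, by standard manipulations, to coordinate-wise tightness: the $t$-tightness from \cite{JP14} holds at each fixed $s$ via the stationarity observation, and the $s$-tightness follows from the uniform-in-$T$ boundedness of the total jump rate of cycle counts on any compact window $[-T_0, 0] \times [0, S_0]$. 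Finally, convergence over the full index set $\Ww'$ is recovered by sending $K \to \infty$.

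The main obstacle is the destruction-rate computation sketched in the second paragraph. Isolating a clean $2 b(w)$ coefficient requires ruling out transpositions that simultaneously destroy one edge of $c$ and reassemble the remaining structure into a cycle of the same word on an overlapping vertex set; these "shadow" configurations are of order $O(1)$ rather than $O(n)$ and must be controlled by a combinatorial enumeration of overlapping reduced walks, in the spirit of \cite{JP14}. That the coefficient is exactly $b(w)$ reflects a matching between two combinatorial quantities---same-sign adjacent letter pairs in $w$ versus position-specific destructive transpositions---and this matching is what forces the limiting field to have the precise Poisson-surface structure of Definition \ref{defn:limitpoisson}.
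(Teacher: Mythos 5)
Your identification of the time-direction death rate $2b(w)$ via transpositions splitting a cycle, your plan to control long words by a first-moment tail estimate, and your awareness that ``shadow'' configurations of order $O(1)$ must be excluded all match the paper's analysis. But the proposal has a genuine gap in the joint dimension--time structure, and this is exactly where the paper's proof has to work hardest.

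The limiting object in Definition~\ref{defn:limitpoisson} is not just a birth--death field in $s$ attached pointwise to halving chains in $t$: it is a PPP whose atoms carry a \emph{single} halving trajectory, and $N_{\bar\omega}(t,s)$ evaluates that one trajectory at dimension $-t$ for every $s$ in the atom's lifetime. The corresponding pre-limit fact is that, under the vertex-removal order \eqref{eq:orderremoval}, a cycle that persists from time $s_1$ to $s_2$ shrinks backward in dimension along the \emph{identical} path at both time slices (see Figure~\ref{fig:commintw} and the surrounding discussion). Your factorial-moment scheme, ``inserting a factor $e^{-2b(w)(s'-s)}$ whenever a single cycle is required to persist between two times,'' encodes only the persistence-in-$s$ marginal. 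It does not encode the path-level coupling of the dimension trajectory across different $s$, nor the halving-chain transition kernel that governs how counts at $(t_1,s)$ and $(t_2,s)$ with $t_1\neq t_2$ are correlated. Without both ingredients your moments cannot match those of the field in Definition~\ref{defn:limitpoisson}, and the covariance identity of Lemma~\ref{lem:finitedcov}(iv) (which is a product of a halving-chain survival factor with a time-persistence factor) would not come out. The paper handles this by first proving weak convergence of the point process $\chi_L(T)$ at the front (Proposition~\ref{lem:jumpone}, via a Watanabe--Meyer time change and the stopping-time control of Lemma~\ref{lem:stimeinf}), and then separately establishing in Section~\ref{sec:halvingconv} that, on the high-probability tangle-free event $\gevent$, the backward CRP induces exactly the coupled halving chains. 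Your plan contains no analogue of the second step.

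A secondary issue: you derive the time-direction birth rate $2b(w)/h(w)$ by ``stationarity forces it.'' That is the paper's heuristic (Section~\ref{hal}), but it only pins down the rate after one has already established that the cycle-count process is a birth--death chain with some constant birth rate, which is exactly what needs proving. The paper instead computes the birth rate directly from a pre-cycle count (Lemmas~\ref{lem:precyclecount} and~\ref{lem:birthrate}); you would need some substitute for that direct computation to make the argument non-circular. Finally, ``$t$-tightness from \cite{JP14} holds at each fixed $s$ via the stationarity observation'' gives marginal tightness slice by slice, but tightness in the two-parameter Skorokhod space of \cite{neuhaus71} requires oscillation control jointly in $(t,s)$; the paper obtains this via the coupling of Section~\ref{sec:halvingconv} together with the finiteness bound of Lemma~\ref{lem:existence}, not by slicing.
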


Fix any $k\in \NN$. Define a family of random variables
\eq\label{eq:whatisnk}
N_k(t,s):= \sum_{w\in \Ww_k/D_{2k}} N_w(t,s), \qquad t \le 0, s \ge 0. 
\en
Thus, $N_k(t,s)$ counts the number of $k$ cycles at dimension $t$ and time $s$. 

Let
\begin{align}\label{adk}
a(d,k)=\begin{cases}
(2d-1)^k - 1 + 2d, \quad \text{when $k$ is even,}\\
(2d-1)^k + 1, \quad \text{when $k$ is odd}. 
\end{cases}
\end{align}
It is shown in \cite[Lemma 41]{DJPP} that $a(d,j)/2j$ is the size of $\Ww_j/D_{2j}$. 

For every fixed $(t,s)$, the variable $N_k(t,s)$ has the limiting law of the number of $k$-cycles in random regular graph of degree $2d$. It follows from \cite{DJPP} that every $N_k(t,s)$ is Poisson with mean $a(d,k)/2k$. Let 
\eq\label{eq:scaledfield}
X_k(t,s)= (2d-1)^{-k/2}\left(  2kN_k(t,s) - a(d,k)  \right).
\en
Then, $X_k$'s are centered random variables with variance one.  

It has been shown in \cite{JP14} that, for every fixed $s$, the vector-valued process $\left(X_k(\cdot,s), \; k\in \NN\right)$ converges in law to a family of independent stationary Ornstein-Uhlenbeck (OU) processes. Here we show surface convergence. 
Recall that a Yule process $\xi$ is a pure-birth process on $\NN$ with generator
\[
Lf(k)= k\left( f(k+1) - f(k)  \right), \quad k\in \NN. 
\]

\begin{thm}\label{prop:covariance} As $d\rightarrow \infty$, the field $\left( X_k(t,s),\; t\le 0, s\ge 0,k\in \NN  \right)$ converges weakly to a family of continuous Gaussian surfaces $\left(  U_k(t,s), \; t\le 0, s\ge 0, k\in \NN  \right)$ over any compact rectangle.
\begin{enumerate}[(i)]
\item If $j \neq k$, then $U_j$ and $U_k$ are independent. 
\item Suppose $j=k$. Consider, two elements $(t_1, s_1)$ and $(t_2,s_2)$ in $(-\infty,0]\times [0, \infty)$. If $\theta:=-\max(t_1, t_2)$ and $s:=\abs{s_1-s_2}$ then
\eq\label{eq:covriancejeqk}
\begin{split}
\Cov\left( U_j(t_1, s_1), U_j(t_2, s_2)  \right)= 2j e^{-j\abs{t_1-t_2}}  \left[  \E \left(e^{-2s\xi(\theta)}\right)   \right]^j \E\left( e^{2s\tau}  \right),
\end{split}
\en 
where $\xi(\theta)$ is the state of a Yule process at time $\theta$, starting from $\xi(0)=1$, and $\tau$ is the random variable that counts the number of sign changes along a $j$-cycle if we attach random i.i.d. $\pm 1$ at every vertex.
\end{enumerate}
\end{thm}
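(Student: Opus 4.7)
My plan is to establish the convergence in three stages: compute the limiting covariance from the Poisson structure of Definition~\ref{defn:limitpoisson}, verify Gaussian convergence of the finite-dimensional distributions via a cumulant calculation, and prove tightness for the functional statement. Applying the Campbell--Mecke formula to the Poisson representation of $N_k$, with $s=|s_1-s_2|$, $\theta=-\max(t_1,t_2)$, and $X_w$ denoting the halving chain launched from $w$,
\begin{align*}
\Cov(N_k(t_1,s_1), N_k(t_2,s_2))
&= \sum_{w \in \Ww'}\frac{2b(w)}{h(w)} \int_{\rr} dz \int_0^{\infty} 2b(w)e^{-2b(w)v}\,dv \\
&\quad \times \mathbf{1}\{z \le \min_i s_i,\ z+v \ge \max_i s_i\}\, P\!\left(|X_w(-t_1)| = k,\, |X_w(-t_2)| = k\right).
\end{align*}
The $(z,v)$-integral reduces to $e^{-2b(w)s}/(2b(w))$. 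Because the halving chain has non-increasing length and leaves every length-$k$ state at total rate $k$, the joint probability factors as $P(|X_w(\theta)|=k)\,e^{-k|t_1-t_2|}$, isolating the dimensional decorrelation prefactor. What remains is the asymptotic identification
\[
\frac{(2k)^2}{(2d-1)^k}\sum_{w \in \Ww'} \frac{e^{-2b(w)s}}{h(w)}\, P(|X_w(\theta)| = k) \;\xrightarrow[d\to\infty]{}\; 2k\,[\E(e^{-2s\xi(\theta)})]^k\, \E(e^{2s\tau}).
\]

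The key observation is that the dominant atoms are ``multi-doublings'' of a generic length-$k$ word $\bar\omega$: words of the form $w = \bar\omega_1^{n_1}\cdots\bar\omega_k^{n_k}$ with block lengths $n_i\ge 1$. For this class $h(w)=1$, $b(w)=b(\bar\omega)+\sum_i(n_i-1)$, and the halving chain factorises — each block $i$ shrinks from $n_i$ to $1$ as an independent pure-death chain, namely the time-reverse of a Yule process $\xi_i$ started at $1$, while global killing occurs at constant rate $|w|-c(w)=k$ throughout. Hence
\[
P(|X_w(\theta)| = k) = e^{-k\theta}\prod_{i=1}^k P(\xi_i(\theta) \ge n_i).
\]
The inner sum $\sum_{n\ge 1}e^{-2s(n-1)}P(\xi(\theta)\ge n)$ evaluates explicitly to $e^{2s+\theta}\E(e^{-2s\xi(\theta)})$, so the $k$-fold product combined with $e^{-k\theta}$ yields $e^{2sk}[\E(e^{-2s\xi(\theta)})]^k$. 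Averaging the residual $e^{-2sb(\bar\omega)}$ factor over a uniform generic $\bar\omega$ gives $e^{-2sk}\E(e^{2s\tau})$ — a cyclic transfer-matrix computation on the iid $\pm 1$ signs — using that $\tau = k-B$ where $B=b(\bar\omega)$ counts same-sign adjacencies on a $k$-cycle. Multiplying everything together and using that the number of generic length-$k$ orbits is asymptotically $(2d-1)^k/(2k)$ produces the claimed limit.

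For joint Gaussianity of the finite-dimensional distributions I would apply the cumulant method: higher cumulants of $X_k(t,s)$ admit diagram expansions for Poisson functionals, and each diagram of order $r\ge 3$ contributes at most $O((2d-1)^{k-rk/2})$, vanishing as $d\to\infty$. Independence across distinct $k$ follows because only atoms whose halving chain visits both length $j$ and length $k$ contribute to $\Cov(X_j,X_k)$; counting such non-generic atoms gives $\Cov(X_j,X_k) = O((2d-1)^{-|j-k|/2})$. Tightness over compact rectangles follows from moment bounds on two-parameter increments (again via Poisson moment formulas) and the criterion of \cite{neuhaus71}.

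The main obstacle is justifying the multi-doubling reduction: showing that atoms coming from doublings that interact across block boundaries, or attached to non-generic $\bar\omega$ (with $h(\bar\omega)>1$ or $c(\bar\omega)>0$), are subdominant as $d\to\infty$, and handling the orbit factor $h(w)$ on periodic words. Once this reduction is in place, the Yule and sign factors emerge cleanly from the Yule hitting-time identity and the cyclic transfer matrix for signs.
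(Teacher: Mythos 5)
Your proposal is correct in outline and produces the right covariance, but it takes a genuinely different route from the paper. The paper's computation runs through the duality between the halving and doubling chains (Lemma~\ref{lem:dual}) applied to the PPP rates (equation~\eqref{eq:ratebt} in Proposition~\ref{prop:dimmarginal}): this converts the sum over \emph{large} source words $w$ with halving-chain probabilities into a sum over \emph{length-$k$} words with doubling-chain expectations (Lemma~\ref{lem:finitedcov}). The Yule process then enters because $|Y_t|$ for the doubling chain started from a length-$k$ word is a sum of $k$ independent Yule processes started at $1$, and the Gaussian structure, word-sign asymptotics, and the $h(w)\to 1$ simplification are bundled into Lemma~\ref{lem:randomsign}. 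You instead apply Campbell--Mecke directly to the marked PPP and identify the dominant atoms as multi-doublings $w=\bar\omega_1^{n_1}\cdots\bar\omega_k^{n_k}$ of a generic primitive $\bar\omega$, using that on this class the halving chain decouples into $k$ independent linear pure-death blocks (each the time-reverse of a Yule process started at $1$) plus an independent Exp$(k)$ killing clock. Your summation identity $\sum_{n\ge1}e^{-2s(n-1)}P(\xi(\theta)\ge n)=e^{2s+\theta}\E(e^{-2s\xi(\theta)})$, the constant $|w|-c(w)=k$ killing rate along the multi-doubling class, the orbit count $\sim(2d-1)^k/(2k)$, and the identification $\tau=k-b(\bar\omega)$ all check out, and the final formula $2ke^{-k|t_1-t_2|}[\E(e^{-2s\xi(\theta)})]^k\E(e^{2s\tau})$ matches~\eqref{eq:covriancejeqk}. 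The trade-off: duality lets the paper avoid any classification of large words (the asymptotics collapse to statistics of a single uniformly chosen length-$k$ word and its doubling chain), whereas your bare-hands approach must prove the multi-doubling reduction, i.e., that $\bar\omega$ with $h(\bar\omega)>1$ or $c(\bar\omega)>0$ and cross-block merges contribute $O((2d-1)^{k-1})$ rather than $(2d-1)^k$. You correctly flag this as the main obstacle; it is genuinely extra work that the paper's duality sidesteps. Two smaller remarks. First, for finite-dimensional Gaussian convergence the paper simply invokes the Poisson CLT; your cumulant/diagram argument is a valid alternative but heavier than needed. Second, your tightness discussion (one sentence) is much thinner than the paper's, which decomposes $N_k$ into four coordinatewise monotone counting processes $H_1,\dots,H_4$, further splits $H_1$ into three pieces including an empirical-process piece handled via Neuhaus's criterion, and then appeals to the continuous mapping theorem; that decomposition is doing real work and a moment bound on two-parameter increments is not a drop-in replacement without detail.
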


Our next results makes precise the idea of running time infinitesimally slowly. 
\begin{thm}\label{prop:finalgauss}
For every finite rectangle in $ \rr \times [0,\infty)$, consider $T_0$ large enough such the following process is well-defined:
\[
\left(  U_j\left(  -T_0 + u, \frac{1}{2}v e^{-T_0}    \right), \; j\in \NN  \right)
\]
where $(u,v)$ lies in the rectangle.
As $T_0$ tends to infinity, the weak limit of this field is another family of Gaussian surfaces $\left( G_j(u,v),\; u\in \rr,v\ge 0   \right)$, independent for each $j$, with the following non-trivial covariance structure: 
\eq\label{eq:finalcov}
\Cov\left(  G_j(u_1,v_1), G_j(u_2,v_2)   \right)= 2j\left( \frac{ e^{-\abs{u_1-u_2}}}{ 1 + \abs{v_1-v_2}e^{-\max(u_1, u_2)}}\right)^j.
\en
\end{thm}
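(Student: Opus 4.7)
The plan is to derive Theorem~\ref{prop:finalgauss} directly from the explicit covariance in Theorem~\ref{prop:covariance}. Fix $j\in\NN$ and set
\begin{align*}
t_i = -T_0 + u_i, \qquad s_i = \tfrac{1}{2} v_i e^{-T_0}, \qquad i=1,2,
\end{align*}
so that $|t_1-t_2| = |u_1-u_2|$, $s := |s_1-s_2| = \tfrac{1}{2}|v_1-v_2|e^{-T_0}$, and $\theta := -\max(t_1,t_2) = T_0 - \max(u_1,u_2)$. Since we are only reparametrizing a centered continuous Gaussian field that is independent across $j$, Gaussianity and cross-$j$ independence are preserved in the limit; thus it suffices to (a) compute the limit of the covariance in \eqref{eq:covriancejeqk} under this reparametrization, and (b) establish tightness of the rescaled field on any compact rectangle in $\rr\times[0,\infty)$.

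For (a), the prefactor contributes $e^{-j|t_1-t_2|}=e^{-j|u_1-u_2|}$, giving the numerator in \eqref{eq:finalcov}. The heart of the calculation is $[\E(e^{-2s\xi(\theta)})]^j$ in the combined regime $\theta\to\infty$, $s\to 0$. Because the rate-one Yule process with $\xi(0)=1$ is geometric with parameter $e^{-\theta}$, one has $e^{-\theta}\xi(\theta)\Rightarrow E\sim\mathrm{Exp}(1)$. Writing
\begin{align*}
2s\,\xi(\theta) = (2s\,e^\theta)\cdot(e^{-\theta}\xi(\theta)), \qquad 2s\,e^\theta = |v_1-v_2|e^{-\max(u_1,u_2)} =: \lambda,
\end{align*}
the constant $\lambda$ is independent of $T_0$; since $x\mapsto e^{-\lambda x}$ is bounded and continuous, the portmanteau theorem yields
\begin{align*}
\E\bigl(e^{-2s\xi(\theta)}\bigr) \longrightarrow \E(e^{-\lambda E}) = \frac{1}{1+\lambda} = \frac{1}{1+|v_1-v_2|e^{-\max(u_1,u_2)}}.
\end{align*}
The remaining factor $\E(e^{2s\tau})$ tends to $1$ by bounded convergence, since $|\tau|\le j$ and $s\to 0$. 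Combining these and raising the Yule factor to the $j$-th power recovers \eqref{eq:finalcov}, which settles finite-dimensional convergence of the Gaussian field.

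For (b), to upgrade to weak convergence on $C$ of a compact rectangle, I would apply the Kolmogorov-Chentsov criterion to the Gaussian increments. Denoting the prelimit covariance by $K_{T_0}$, the squared increment
\begin{align*}
\E\bigl[(U_j(u_1,v_1)-U_j(u_2,v_2))^2\bigr]=K_{T_0}(u_1,v_1;u_1,v_1)+K_{T_0}(u_2,v_2;u_2,v_2)-2K_{T_0}(u_1,v_1;u_2,v_2)
\end{align*}
is controlled by a Hölder modulus of $K_{T_0}$ on the compact rectangle. Uniform-in-$T_0$ Hölder continuity in the $u$-variables is direct from the exponential prefactor; uniform-in-$T_0$ Hölder continuity in the $v$-variables reduces to a uniform derivative bound on $v\mapsto\E(e^{-2s\xi(\theta)})$, which in turn follows from a uniform-in-$\theta$ moment bound on $e^{-\theta}\xi(\theta)$ that is immediate from its geometric law. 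I expect the main obstacle to be cleanly packaging these uniform estimates into a joint modulus of continuity valid across all large $T_0$ and the full compact rectangle; once this is done, finite-dimensional convergence plus tightness yield weak convergence to the centered Gaussian field $G_j$ with the stated covariance \eqref{eq:finalcov}.
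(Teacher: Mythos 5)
Your proposal is correct and follows essentially the same route as the paper: the reparametrization, the use of Lemma~\ref{lem:weakyule} to turn $\E(e^{-2s\xi(\theta)})$ into $1/(1+\lambda)$ with $\lambda = |v_1-v_2|e^{-\max(u_1,u_2)}$, the observation that $\E(e^{2s\tau})\to 1$, and a Kolmogorov--Chentsov type moment bound uniform in $T_0$ for tightness all match the paper's argument. Two small remarks. First, the ``cleanly packaging'' step you flag as the main obstacle is exactly what the paper handles by invoking a ready-made multiparameter tightness criterion, \cite[Theorem~1]{DZ08}, with $d=2$, $\alpha=6$, $\beta=3$: from the uniform order estimate $\Var\bigl(U_j(u_1,v_1)-U_j(u_2,v_2)\bigr)=O\bigl(\max(|u_1-u_2|,|v_1-v_2|)\bigr)$ one gets the sixth-moment bound \eqref{kccondition}, which supplies both tightness and almost sure continuity of the limit. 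Second, your claim that $u$-H\"older continuity is ``direct from the exponential prefactor'' is a slight oversimplification: $u$ also enters the Yule factor through $\theta = T_0-\max(u_1,u_2)$, so the derivative bound on $\E(e^{-2s\xi(\theta)})$ must be taken jointly in $(u,v)$, not only in $v$. This is harmless since the same uniform moment bound on $e^{-\theta}\xi(\theta)$ covers it, but the phrasing suggests the $u$-dependence of the Yule term was overlooked.
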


Note that, for fixed time parameter $v_1=v_2$ the process is a stationary OU process. For a fixed dimension $u=u_1=u_2$, the process is a stationary Gaussian process. In conclusion, this Gaussian field can be thought of as the asymptotic fluctuation of cycle counts in the heuristic set-up described in the very beginning. 
\medskip

We now focus on eigenvalues of $G(t)$. For any $d$-regular graph on $n$ vertices $G$
and function $f\colon\rr \rightarrow \rr$, define the random variable
\[
\tr f(G):=\sum_{i=1}^n \hat{f}(\lambda_i)
\]
where $\lambda_1\ge \ldots \ge \lambda_n$ are the eigenvalues of adjacency matrix of $G$
\textit{divided} by $2(2d-1)^{1/2}$ and $\hat{f}$ is $f$ with the constant term adjusted.  The details, similar to \cite{JP14}, will be described later.  
By a polynomial basis we refer to a sequence of polynomials $\{ f_0\equiv 1, f_1, f_2, \ldots \}$ such that $f_k$ is a polynomial of degree $k$ of a single argument over reals. 

\begin{thm}\label{prop:eigenvalues}

There exists a polynomial basis $\{ f_i,\; i\in \NN\}$ 
(depending on $d$)
such that for any $K\in \NN, T_0 > 0, S_0 > 0$, the process 
\[
( \tr f_k(G(T+t, s)), \; k \in [K], \; t \in [-T_0, 0], s\in [0, S_0] )
\]
converges in law, as $T$ tends to infinity, to the limiting field $(N_k(t,s),\; k,t,s )$ of Proposition~\ref{prop:weakconv}.
Hence, for any polynomial $f$, the process $\big(\tr f(G(T+t, s))\big)$
converges to a linear combination of $(N_k(t, s),\,k,t,s)$.
\end{thm}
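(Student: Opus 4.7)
The plan is to follow the strategy of \cite{JP14} and \cite{DJPP}: construct the polynomial basis $\{f_k\}$ so that $\tr f_k(G)$ equals a centered count of closed non-backtracking walks (CNBWs) of length $k$, and then show that for the random regular graphs in the relevant regime these CNBWs are dominated by genuine cycles. Since every realization of $G(T+t,s)$ is $2d$-regular, there is an explicit family of degree-$k$ polynomials (essentially Chebyshev-like, as in \cite[Sec.~4]{DJPP}) whose trace on the normalized adjacency matrix counts $\mathrm{CNBW}_k(G)$ divided by $(2d-1)^{k/2}$. Adjusting the constant term yields the basis elements $f_k$; as a pathwise identity this representation is valid simultaneously for every $(t,s)$.

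First I would set up this algebraic representation and record that $\tr f_k(G(T+t,s))$ differs from $2k\, N_k(T+t,s)\cdot (2d-1)^{-k/2}$ (after subtracting its deterministic mean) only through non-cycle CNBW contributions together with cycles of length $k$ whose word $w$ satisfies $h(w) > 1$, all of which can be written as $\mathbb{Z}$-linear combinations of the quantities $N_w(T+t,s)$ for $w \in \Ww'_K$. Choosing the additive constant in $f_k$, and matching the precise integer coefficients that define $f_k$, reduces this combination to $N_k(t,s)$ in the limit rather than to some affine modification of it.

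Next I would bound the CNBW error, i.e.\ the contribution from non-backtracking closed walks that revisit an edge or a vertex. The moment bounds of \cite{DJPP} give that, at any fixed $(t,s)$, the expected number of such walks of length at most $k$ remains $O(1)$ as $T \to \infty$, whereas the contribution of genuine cycles is of order $(2d-1)^k$; after the $(2d-1)^{k/2}$ normalization the non-cycle part is $o_p(1)$. To upgrade this to a statement uniform in $(t,s) \in [-T_0,0]\times[0,S_0]$, I would use the stationarity of the random transposition chain together with a discretization-plus-tightness argument: between successive transpositions and CRP events the CNBW count is piecewise constant, and each such event alters it by a quantity bounded in expectation by a polynomial in $d$ and $k$ independent of $T$, so a Markov inequality on a fine enough mesh controls the supremum. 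Combining this with Theorem~\ref{prop:weakconv} and the continuous-mapping theorem applied to the linear functional $(N_w)_w \mapsto \sum_{|w|=k} N_w = N_k$ yields the claimed joint convergence of $\tr f_k(G(T+t,s))$ to $N_k(t,s)$ over $k\in[K]$ and $(t,s)$ in the compact window. The statement for an arbitrary polynomial $f$ is then immediate by expanding $f$ in the basis $\{f_k\}_{k\le\deg f}$ and invoking linearity of the trace.

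The main obstacle I anticipate is the uniform control of the non-cycle CNBW contribution along the time axis. In \cite{JP14} only a single time slice was considered, so a pointwise second-moment bound sufficed. Here, although the marginal law at each $s$ is stationary, one must also rule out the appearance of exceptional times in $[0,S_0]$ at which a single transposition produces an unusually large burst of non-backtracking walks that revisit vertices. Establishing the right modulus-of-continuity estimate, most likely by quantifying the effect of a single transposition on short CNBW counts and summing over the $O(T)$ transpositions that fall in the window, is the technically delicate step.
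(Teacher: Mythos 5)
Your overall reduction — express $\tr f_k$ via closed non-backtracking walks through the identity \eqref{eq:eigencnbw}, show that bad CNBWs are negligible, and then recover $N_k$ from the divisor sums $\sum_{j\mid k}2j N_j$ by M\"obius inversion, feeding in Theorem~\ref{prop:weakconv} and the continuous mapping theorem — is exactly the strategy the paper follows, and you have correctly identified that the genuinely new technical burden in the two-parameter setting is the uniform-in-time elimination of bad CNBWs.

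However, the step where you handle that burden has a real gap. You propose a discretization-plus-Markov-inequality argument: bound the expected change in the bad-CNBW count per transposition, observe the process is piecewise constant in $s$, and union-bound over a fine mesh. This does not close the argument. The random transposition chain at dimension $T$ (with $n=M_T$ vertices) makes order $n$ jumps in the window $[0,S_0]$, while the one-point probability of a bad short CNBW is only $O(1/n)$ (this is the characteristic-$\ge 1$ bound of \cite[Proposition 15]{DJPP}). A union bound over jump times therefore gives $O(1)$, not $o(1)$, and placing the mesh coarser than the jump rate does not help because between mesh points the process can still jump and create a bad CNBW. Bounding the \emph{expected increment} of the CNBW count per transposition does not rule out the appearance of a bad walk at some exceptional time; what is needed is a bound on the probability that any bad walk ever forms in the window, and that requires using the combinatorial structure of how a transposition can create one.

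The paper supplies precisely this structure in Proposition~\ref{prop:nobadwalk}, proved via Lemmas~\ref{lem:nobadcharacter}, \ref{lem:tanglecnbw}, \ref{lem:tanglefreedimt}, and \ref{lem:badcnbwshrink}. The mechanism is: a bad short CNBW forces the graph to fail the $(K,K)$ tangle-free property, i.e.\ to contain a ``double-lollipop'' — two short cycles joined by a short path. One then tracks \emph{pre-lollipops} (trails that become a double-lollipop after one transposition, analogous to the pre-cycles of Lemma~\ref{lem:precyclecount}) and shows that only $O(\log n)$ pre-lollipop vertices are ever available during $[0,S_0]$. A double-lollipop can form only if a transposition selects a pair of vertices from this small set, which has probability $O(\log^2 n/n^2)$ per transposition; summing over the Poisson$(nS_0)$ transpositions gives $O(\log^2 n/n)\to 0$, which \emph{does} survive the union bound. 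This cumulative pre-lollipop count — rather than a mesh argument — is what replaces the single-time second-moment estimate of \cite{JP14}. A second ingredient (Lemma~\ref{lem:badcnbwshrink}) propagates tangle-freeness backward in dimension through the CRP deletion dynamics, using the same shrinking-vertex counting as in Section~\ref{sec:halvingconv}. If you want to complete your proposal, this is the piece of machinery you need to build; as written, your mesh argument would not produce the uniform $o_p(1)$ control you assert.
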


For our final result we will take $d$ to infinity. We will make the following notational convention: for any polynomial $f$, we will denote the corresponding linear combination from Theorem \ref{prop:eigenvalues} by $\tr f\left(G(\infty + t, s) \right)$. 

\begin{thm}\label{prop:chebycov}
Let $\{T_k, \; k \in \NN\}$ denote the Chebyshev orthogonal polynomials of the first kind on $[-1,1]$. As $d$ tends to infinity, the collection of processes
\eq\label{eq:chebyfield}
\left( \tr T_{k} \left(G(\infty + t, s) \right) - \E\left[ \tr T_{k} \left(G(\infty + t, s) \right)\right],\; t\le 0,\; s\ge 0, \; k\in \NN \right)
\en
converges weakly to the Gaussian field $U/2$, as in Theorem \ref{prop:covariance}. In particular, under the set-up of Theorem \ref{prop:finalgauss}, (half of) the same weak limit holds. 
\end{thm}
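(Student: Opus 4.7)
The plan is to reduce the trace of $T_k$ applied to the normalized adjacency matrix of $G(T+t,s)$ to a linear combination of the cycle counts $C_w$, and then read off the Gaussian limit directly from Theorem~\ref{prop:covariance}. The reduction rests on the Ihara/CNBW identity already used in \cite{JP14,DJPP}: for any $2d$-regular graph $G$,
\[
2(2d-1)^{k/2}\tr T_k(A(G)/2\sqrt{2d-1}) = \CNBW_k(G) - r_k(n,d),
\]
where $\CNBW_k(G)$ counts closed non-backtracking walks of length $k$ in $G$ and $r_k(n,d)$ is a deterministic correction depending only on $n$ and $d$. This is precisely the algebraic starting point already exploited in the proof of Theorem~\ref{prop:eigenvalues}.

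First I would decompose $\CNBW_k(G)$ by the word of the underlying walk. A closed non-backtracking walk that traces out a cycle with word $w\in\Ww_j/D_{2j}$, $j \divides k$, contributes $\tfrac{2k}{h(w)}C_w(G)$ per word class. The remaining closed non-backtracking walks -- the ones not obtained by tracing a single cycle -- give a remainder $R_k(G)$ that is supported on ``tangles'' which are rare for the random permutation model and whose contribution is negligible in the limit $T\to\infty$. Plugging this decomposition back into the identity and centering rewrites $\tr T_k(G)-\E\tr T_k(G)$ as a deterministic linear combination of the centered cycle-count variables $C_w - \E C_w$ with $|w|\leq k$, plus an error that will be negligible after the scaling applied below.

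Next, using Theorem~\ref{prop:eigenvalues} jointly for $k=1,\dots,K$ (equivalently, Theorem~\ref{prop:weakconv} for the underlying $C_w$'s), we obtain the weak convergence, as $T\to\infty$ at fixed $d$, of the centered Chebyshev field to the corresponding linear combination of centered $N_w$'s. To then take $d\to\infty$, I would isolate the leading contribution: there are $\sim (2d-1)^k/(2k)$ primitive words $w$ of length $k$, each contributing $(2k/h(w))\cdot C_w = 2k\cdot C_w$, so the leading term of $(2d-1)^{-k/2}(\tr T_k - \E\tr T_k)$ is exactly $\tfrac12 X_k$ in the notation of~\eqref{eq:scaledfield}. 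Theorem~\ref{prop:covariance} then identifies the limit as $\tfrac12 U_k$, with independence across $k$ inherited from Theorem~\ref{prop:covariance}(i).

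The main obstacle is the joint control of the two limits: showing that, after the scaling $(2d-1)^{-k/2}$, contributions from non-primitive words of length $j\divides k$ with $j<k$ and from the tangle remainder $R_k$ are $o(1)$ uniformly in $(t,s)$ over compact rectangles, and that this is compatible with joint convergence across several $k$'s. Concretely, one must show $\Var\!\bigl(R_k/(2d-1)^{k/2}\bigr)\to 0$ and that the non-primitive contributions, whose total weight is $O((2d-1)^{k/2})$, decorrelate after centering so that the cross-covariances between different $k$'s vanish in the limit. Both estimates are essentially present in \cite{DJPP} for a single snapshot; the new ingredient is to propagate them along the two-parameter process, which by the coupling description of Definition~\ref{defn:mconwords} reduces to verifying that neither the CRP running backward in dimension nor the random-transposition dynamics in time creates additional tangled configurations over a fixed compact window.
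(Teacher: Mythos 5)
Your route matches the paper's --- Chebyshev/CNBW identity, reduction to cycle counts, Theorem~\ref{prop:eigenvalues} for $T\to\infty$, Theorem~\ref{prop:covariance} for $d\to\infty$ --- but the ``main obstacle'' you describe is not real, and your scaling is off. The theorem is about $\tr T_k(G(\infty+t,s))$, which by the convention stated immediately before it is \emph{defined} to be the linear combination of the limiting fields $N_j$ supplied by Theorem~\ref{prop:eigenvalues}; the $T\to\infty$ step and the removal of the tangle remainder $R_k$ are already complete, and there is nothing further to control in that direction. The ``decorrelation'' of non-primitive contributions also needs no separate estimate: once things are written in terms of the $X_j$'s, the cross-terms carry coefficients $(2d-1)^{(j-k)/2}$ with $j<k$, which vanish as $d\to\infty$ by the joint tightness and Gaussianity already supplied by Theorem~\ref{prop:covariance}.

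The scaling error: since the eigenvalues entering $\tr$ are already normalized by $2\sqrt{2d-1}$, the identity reads $2\,\tr T_k(G(\infty+t,s)) = (2d-1)^{-k/2}\sum_{j\mid k} 2j N_j(t,s)$, and centering yields
\[
2\bigl(\tr T_k - \E\,\tr T_k\bigr) = (2d-1)^{-k/2}\sum_{j\mid k}\bigl(2jN_j - a(d,j)\bigr) = \sum_{j\mid k}(2d-1)^{(j-k)/2} X_j.
\]
The $j=k$ term is $X_k$, so $\tr T_k - \E\,\tr T_k \to \tfrac12 U_k$ with \emph{no} further $(2d-1)^{-k/2}$ factor. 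The quantity you wrote, $(2d-1)^{-k/2}(\tr T_k - \E\,\tr T_k)$, would converge to zero, not to $\tfrac12 U_k$.
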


The independent Gaussian evolution of Chebyshev polynomials in \eqref{eq:finalcov} is similar to Borodin's result on the dynamics of minor processes of stochastically evolving Wigner random matrices. See \cite[Proposition 3]{Bor2}. This explains how the random transposition chain preserves the GFF fluctuations of eigenvalues in a manner similar to Dyson Brownian motion. A more detailed comparison is given below.

\subsection{Relevant existing literature and comparison with Wigner}
In this subsection we discuss analogies between this work and \cite{Bor2}. Some of the notations below is borrowed from that source. We keep our description informal. The reader is also encouraged to look at the introduction to \cite{JP14}. Consider two families of independent identically distributed real-valued stochastic processes $\left\{ Z_{ij}(t),\; j>i \ge 1,\; t\in \rr\right\}$ and $\left\{ Y_i(t),\; i\ge 1,\; t\in \rr  \right\}$ which have zero mean and suitable higher-moment conditions. Assume that there is a kernel $c(s,t)$ such that $c(s,t)\ge 0$, $c(t,t)=1$ such that 
\[
\begin{split}
\E\left(  Z_{12}(s) Z_{12}(t) \right)&\equiv c(s,t)\equiv \frac{1}{2}\E \left(  Y_1(s) Y_1(t) \right),\\
\E\left( Z^2_{12}(s) Z^2_{12}(t)  \right)&\equiv 2c^2(s,t) + 1. 
\end{split}
\]

Define a process of infinite Wigner matrices $X(t)$ by 
\[
X_{ij}(t)=\begin{cases}
Z_{ij}(t),& i < j,\\
Y_i(t),& i=j,\\
X_{ji}(t),& i > j.
\end{cases}
\] 
When the entry processes are standard Ornstein-Uhlenbeck processes on $\rr$, the spectrum evolves as a process known as Dyson Brownian motion. In this setting at a single time point the matrix arising forms the Gaussian Orthogonal Ensemble (GOE) ($\beta=1$). \\
For any $n\in \NN$, let $X(n,t)$ denote the $n\times n$ principal submatrix of $X(t)$. 
Fix a parameter $L$ that will be sent to infinity. Let $z$ be a complex number in the upper half plane $\mathbb{H}$. Let $y=\abs{z}^2$ and $x=2\Re(z)$. The height function $H_L$ of the eigenvalue distribution of $X(t)$ is a function indexed by $\mathbb{H} \times \rr$ given by
\[
H_L(z,t)=\sqrt{\frac{\pi}{2}} \#\left\{ \text{eigenvalues of $X\left( \lfloor Ly\rfloor ,t \right) \ge \sqrt{L} x$ }  \right\}.
\]
Then, Borodin shows that, as $L$ tends to infinity, $\left\{ H_L(z, t),\; z\in \mathbb{H},\; t\in \rr  \right\}$, seen as a stochastic process of random distribution on $\mathbb{H}$, converges in law to a generalized Gaussian process on $\mathbb{H} \times \rr$ whose every $t$ marginal is the GFF on $\mathbb{H}$ with zero boundary condition. 

The law of this limiting Gaussian process can be characterized by Chebyshev polynomials (\cite[Proposition 3]{Bor2}). In short, consider Chebyshev polynomials $\left(T_j,\; j \in \NN\right)$. Consider the corresponding fluctuations of linear eigenvalue statistics:
\[
\tr\left( T_j\left(X\left( \lfloor Ly \rfloor, t \right)\right)  \right) - \E\left[ \tr\left( T_j\left(X\left( \lfloor Ly \rfloor, t \right)\right)  \right)  \right].
\]   
Borodin shows, as $L\rightarrow \infty$, that the collection of limiting centered Gaussian surfaces, indexed by $\NN$, are independent of one another. For a given $j$, two points on the surface $(y_1, t_1)$ and $(y_2, t_2)$ have a non-trivial covariance given by 
\eq\label{eq:wignercov}
\frac{j}{2} \left(  \sqrt{\frac{y_1}{y_2}} c(t_1, t_2)  \right)^j, \quad y_1 \le y_2.
\en

Consider now Theorem \ref{prop:eigenvalues} and the final Gaussian field in Theorem \ref{prop:finalgauss} (divided by $2$). Not only the Gaussian surfaces for different Chebyshev polynomials are independent, but for a given polynomial the covariance structure is almost identical to \eqref{eq:wignercov}. There are two differences though. One, there is a re-parametrization of $y_i=e^{2u_i}$ which gives us
\[
\sqrt{\frac{y_1}{y_2}}= e^{-\abs{u_1-u_2}}, \quad y_1\le y_2. 
\]  
More importantly, if one takes the only possible choice of $c(v_1, v_2)=\left( 1 + \abs{v_1-v_2}\right)^{-1}$, there is an additional term that does not match with \eqref{eq:wignercov}. 

However, if $u_1=u_2=u$ and we redefine $v$ to $v e^{-u}$, it does give us the correct expression. The heuristic explanation is that while in the case of Borodin every minor process is moving in time at the same speed, for the random transpositions the speed depends on the size of the graph being considered. 

\bigskip
Our analysis is also seemingly related to a series of work by \cite{KOV} and others, 
 where a sequence of permutations satisfying condition (ii) is called
 a \emph{virtual permutation}, and the distribution on virtual permutations
 satisfying condition (i) is considered as a substitute for Haar measure on $S(\infty)$,
 the infinite symmetric group.  Although many of the same ingredients appear in both these works, our work considers several permutations while the other considers a single infinite permutations.    

Random matrix theory for sparse random regular graphs is a recent area of research which is not covered under the rubric of universality of traditional random matrix ensembles. See the discussion in the article \cite{DP}. However, empirical spectra distributions have been shown to approximate Wigner's semicircle law in different limiting regimes in \cite{DP} and \cite{TVW}. Both the above articles consider a sequence $\{d_n\}$ of degree that goes to infinity with the size $n$ of the graph, although several results in \cite{DP} extend easily to the case of a fixed degree $d$. The study of linear eigenvalue statistics for both fixed $d$ and growing $\{d_n\}$ is done in \cite{DJPP} where many similar combinatorial objects were exploited. The closest relative of the current article is \cite{JP14} which covers the dimension dynamics for fixed $d$, much as the current article. The thesis \cite[Chapter 4]{Johnson14} extends the ideas in \cite{JP14} to growing $\{d_n\}$ where a complete proof of the convergence of fluctuation of height function to the GFF has been done.

Let us also mention that when $d=1$, the process in dimension is a continuous-time version
of the CRP itself while, in time, it is the well-known random transposition Markov chain. The latter has been studied in several contexts. See, for example, the references in the book \cite{Dbook}. In particular, a long chain of literature is devoted to mixing properties of the chain. Modern bounds and more references can be found in the article \cite{BSZ11}. A slightly related study is the effect this chain has on large cycles of the permutation (the split-merge transformation). See the article \cite{DMZZ04}. In our context the case $d=1$ is unusual compared to $d> 1$. For example, $G(t,s)$ is likely to be disconnected when $d=1$ and connected when $d$ is larger. However, our results for finite $d$ continue to hold.

\section{Properties of the limiting field}

We start with the process described in Definition \ref{defn:limitpoisson}. The PPP $\chi$ and the countably many halving processes can clearly be constructed on a suitable probability space. What is not obvious is why the field $N_{\cdot}(\cdot, \cdot)$ as defined in \eqref{eq:definecount} is finite almost surely. In this subsection we prove this and other properties of the limiting field.

The following definition will be used throughout the rest of the article.

\begin{defn}\label{defn:colloquial}
A few colloquial conventions regarding an atom $(z,v,w)$ of $\chi$ or any other point process on the same space. We will refer to $w$ as the word of the atom. We say that the atom is \textit{born} during time interval $J$ if $z\in J$. We say that the atom \textit{exists} at time $s$ if $z\le s\le z+v$. The middle coordinate $v$ will be referred to as the lifetime of the atom. We will frequently use the memoryless property of the lifetime distribution without mention. Also, whenever we write `time', it refers to the time of the halving chains, which is really the dimension running backwards for the limiting field. 
\end{defn}


\begin{lemma}\label{lem:dimzero}
For any $w\in \Ww'$, the process $\left( N_w(0,s),\; s\ge 0  \right)$ is a continuous time \bad chain on the state space $\{0,1,2,\ldots\}$ and generator
\eq\label{eq:badgen0}
\frac{2 b(w)}{h(w)} \left( f(x+1)-f(x)\right)+ 2x b(w) \left( f(x-1)-f(x)\right)1_{\{x>0\}}.
\en
The chain is running in stationarity and is time-reversible. The collection of processes $\left( N_w(0,\cdot),\; w\in \Ww'  \right)$ are independent of one another. In particular, the distribution of $\left( N_w(0,s),\; w\in \Ww'   \right)$, for any fixed $s$, is the product measure of independent Poisson$\left( 1/h(w)\right)$, $w\in \Ww'$. 
\end{lemma}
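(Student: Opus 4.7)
The crucial simplification at $t=0$ is that the halving chain in \eqref{eq:definecount} is evaluated at $u=-t=0$, and by construction $X_{z,v,w'}(0)=w'$. Hence
\[
N_w(0,s) \;=\; \#\bigl\{(z,v,w')\in\chi : w'=w,\; z\le s\le z+v\bigr\},
\]
so $N_w(0,s)$ simply counts atoms of $\chi$ with word $w$ that are alive at time $s$. Since $\chi$ restricted to the disjoint slices $\rr\times[0,\infty)\times\{w\}$ gives independent Poisson processes, independence of the family $(N_w(0,\cdot))_{w\in\Ww'}$ is immediate and it suffices to analyse a fixed word. By Definition~\ref{defn:limitpoisson}, the atoms with word $w$ form a PPP on $\rr\times[0,\infty)$ with intensity $\tfrac{2b(w)}{h(w)}\,dz \otimes 2b(w)\,e^{-2b(w)v}\,dv$.

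\textbf{Step 1: Markov structure and generator.} The $z$-marginal intensity of this PPP is the constant $2b(w)/h(w)$, so new atoms are born at rate $2b(w)/h(w)$ regardless of the current count. For deaths I would invoke the marking/memoryless property of PPPs: conditional on having $x$ atoms alive at time $s$, their residual lifetimes are i.i.d.\ $\Exp(2b(w))$, so the first of them expires at total rate $2xb(w)$. Together this yields the Markov property with generator \eqref{eq:badgen0}.

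\textbf{Step 2: Stationary law and reversibility.} A direct computation
\[
\E[N_w(0,s)] \;=\; \int_{-\infty}^s \frac{2b(w)}{h(w)}\,dz \int_{s-z}^\infty 2b(w)\,e^{-2b(w)v}\,dv \;=\; \frac{1}{h(w)},
\]
combined with the standard count formula for a PPP over a deterministic region, upgrades to $N_w(0,s)\sim\mathrm{Poisson}(1/h(w))$ for every $s$; by the independence across words already noted, this gives the asserted product Poisson joint marginal. On the other hand, detailed balance for the generator \eqref{eq:badgen0},
\[
\pi(x)\,\frac{2b(w)}{h(w)} \;=\; \pi(x+1)\,2(x+1)b(w),
\]
is uniquely solved by $\pi=\mathrm{Poisson}(1/h(w))$. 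Matching this with the previous display shows the chain is started in its stationary distribution and, via the same detailed balance equation, is time-reversible.

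\textbf{Expected difficulty.} The lemma is essentially an unpacking of the PPP definition together with elementary properties of exponential lifetimes, so I do not foresee a genuine obstacle. The only step warranting explicit justification is the marking/memoryless claim, since this is what promotes the functional of $\chi$ to a \emph{bona fide} birth-and-death Markov chain on $\{0,1,2,\ldots\}$. Finiteness of $N_w(0,s)$ is automatic from $\E N_w(0,s)=1/h(w)<\infty$, so no separate argument is required.
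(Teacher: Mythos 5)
Your proposal is correct and follows essentially the same route as the paper: observe that at $t=0$ the halving chains are evaluated at time $0$ so $N_w(0,\cdot)$ simply counts live atoms of $\chi$ with word $w$, deduce independence across $w$ from the PPP structure, identify the birth rate $2b(w)/h(w)$ and (via memorylessness of exponential lifetimes) the death rate $2xb(w)$ to recognize an $M/M/\infty$-type birth-and-death chain, and then identify the stationary/reversing measure as Poisson$(1/h(w))$. The paper's proof is considerably terser (it labels the generator identification and the Poisson stationary law as "elementary"), while you fill in the detailed-balance and mean computations explicitly; the substance is identical.
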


\begin{proof}[Proof of Lemma \ref{lem:dimzero}] From the PPP structure, it is immediate that the processes $N_w(0,\cdot)$ are independent for various $w$. For a fixed $w$, atoms arrive at a rate $2b(w)/h(w)$ and survive an i.i.d. exponentially distributed amount of time. Clearly, $N_w(0, \cdot)$ is a continuous time Markov chain with generator \eqref{eq:badgen0}. 
This is obviously a \bad chain. Elementary arguments show that the unique stationary law is Poisson$(1/h(w))$ under which it is reversible. 
\end{proof}

The above is a particular case covered in Proposition \ref{prop:dimmarginal} stated later. However we defer stating it since one needs a few more definitions to do that.  \\
 
Now recall \eqref{eq:definecount} and the halving chain in Definition \ref{defn:mconwords}. The next lemma roughly states that given any finite (dimension, time) rectangle and words of length at most $K$, with high probability, there exists an $L$ large enough such that all such words must have shrunk from words of length at most $L$ at dimension zero.  

\begin{lemma}\label{lem:existence}  
Fix $K, L\in \NN, L > K$, let $\chi_L$ denote the restriction of $\chi$ to $[0,\infty)\times[0,\infty)\times \Ww'_L$. Define the field
\[
N^{(L)}_{\bar\omega}(t,s)\stackrel{\Delta}{=} \sum_{(z,v,w)\in \Lambda^*(t,s) \cap \chi_L} 1\left\{ X_{z,v,w}(-t)=\bar\omega   \right\}.
\]
Then, given any rectangle $R:=[-T_0,0]\times [0, S_0] \times \Ww'_K$, for $T_0, S_0 >0$ and $K\in \NN$ and any $\epsilon >0$, there exists an $L \gg K$ such that 
\eq\label{eq:LapproxN}
\P\left(  N_{w}(t,s)= N^{(L)}_w(t,s),\; (t,s,w)\in R   \right) \ge 1 - \epsilon. 
\en 
In particular, the random variable $\sup_{(t,s,w)\in R}N_{w}(t,s)$ is almost surely finite. 
\end{lemma}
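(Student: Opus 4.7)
The event $\{N_w(t,s)\ne N_w^{(L)}(t,s)\text{ for some }(t,s,w)\in R\}$ is exactly the event that there exists a \emph{bad atom}: an atom $(z,v,w')\in\chi$ with $|w'|>L$, alive at some $s\in[0,S_0]$ (i.e.\ $[z,z+v]\cap[0,S_0]\ne\emptyset$), and whose halving chain $X_{z,v,w'}$ visits $\Ww'_K$ within time $T_0$. By Markov's inequality and Campbell's formula applied to the PPP $\chi$,
\[
\P(\exists\text{ bad atom}) \le \E[\#\text{bad atoms}] \;=\; \sum_{|w'|>L}\lambda(w')\,p(w'),
\]
where $\lambda(w')=(2b(w')S_0+1)/h(w')$ is the expected number of atoms of type $w'$ alive during $[0,S_0]$ (obtained by integrating the PPP intensity over $\{(z,v)\colon [z,z+v]\cap[0,S_0]\ne\emptyset\}$), and $p(w')=\P\bigl(X_{w'}(u)\in\Ww'_K\text{ for some }u\in[0,T_0]\bigr)$.

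To bound $p(w')$: reaching $\Ww'_K$ requires at least $|w'|-K$ successful halvings, and each halving at current length $i$ takes at least $\Exp(i)$ time. Hence $p(w')\le\P\bigl(\sum_{i=K+1}^{|w'|}E_i\le T_0\bigr)$ with independent $E_i\sim\Exp(i)$. A Chernoff computation with the optimal exponential tilt $\lambda^\ast\approx|w'|/(e^{T_0}-1)$ yields geometric decay
\[
p(w')\le C_{K,T_0}\,|w'|^{O(K)}\,(1-e^{-T_0})^{|w'|}.
\]
This by itself is not enough, since $|\Ww_k/D_{2k}|\sim(2d-1)^k$ could overwhelm the decay. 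The rescue is a combinatorial observation: a word $w'\in\Ww_k/D_{2k}$ that is halvable to $\Ww'_K$ is uniquely determined by its \emph{reduced form} $\bar w\in\Ww'_K$ of some length $m\le K$ (the word obtained by exhausting all possible halvings, which is well defined up to $D_{2m}$) together with a positive composition $a_1+\cdots+a_m=k$ recording the multiplicities of the consecutive runs through which each letter of $\bar w$ is inflated. Hence the number of such $w'$ of length $k$ is at most $\sum_{m\le K}|\Ww_m/D_{2m}|\binom{k-1}{m-1}=O(k^{K-1})$, which is polynomial in $k$.

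Combining these with the trivial bound $\lambda(w')\le 2|w'|S_0+1$,
\[
\E[\#\text{bad atoms}]\le C_{K,S_0,T_0}\sum_{k>L}k^{O(K)}(1-e^{-T_0})^k \xrightarrow{L\to\infty} 0,
\]
so $L$ can be chosen so the bound is below $\epsilon$, yielding \eqref{eq:LapproxN}. The main obstacle is the simultaneous need for (i) the sharp Chernoff rate $(1-e^{-T_0})$, which requires a careful optimization adapted to the specific rates $\{K+1,\ldots,|w'|\}$, and (ii) the polynomial combinatorial count of halvable words, which is what converts the apparent $(2d-1)^k$ growth of $\Ww_k/D_{2k}$ into a polynomial factor compatible with the Chernoff decay. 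For the last assertion, $\sup_R N_w^{(L)}$ is almost surely finite because $\chi_L$ restricted to the bounded $(z,v)$-region relevant to $R$ has integrable intensity supported on the finitely many $w'$ with $|w'|\le L$. Applying \eqref{eq:LapproxN} along a summable sequence (e.g.\ $\epsilon_n=1/n^2$) and invoking Borel--Cantelli then gives $\sup_R N_w<\infty$ almost surely.
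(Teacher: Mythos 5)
Your proof is correct and follows essentially the same strategy as the paper's: the two load-bearing facts are (a) a halving chain started from length $l$ visits $\Ww'_K$ within time $T_0$ with probability at most $l^{O(K)}(1-e^{-T_0})^{l-K}$, and (b) only $O(l^{O(K)})$ words of length $l$ can shrink to $\Ww'_K$ at all, because they are parametrized by a reduced word in $\Ww'_K$ together with a composition of $l$ recording run lengths -- this is exactly the paper's count in display \eqref{candidate}. The one technical difference is how the decay rate in (a) is obtained: the paper fixes the set $I$ of $l-K$ positions to be deleted, attaches independent $\mathrm{Exp}(1)$ clocks to those positions, and reads off $(1-e^{-T_0})^{l-K}$ directly (then multiplies by the number of $(w,I)$ pairs); you instead dominate the hitting time by $\sum_{i=K+1}^{l}E_i$ with $E_i\sim\mathrm{Exp}(i)$ and extract the rate by a Chernoff optimization. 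Your route is a valid repackaging (and the tilt $\lambda^\ast\approx |w'|/(e^{T_0}-1)$ does give $\log(1-e^{-T_0})$ in the exponent, as one can also see more quickly from the order-statistics identity $\sum_{i=K+1}^l E_i \stackrel{d}{=} X_{(l-K:l)}$ for i.i.d.\ $\mathrm{Exp}(1)$), but the paper's clock argument avoids the optimization altogether. The final a.s.\ finiteness step is also fine, though Borel--Cantelli is overkill: once $\P(\sup_R N_w = \sup_R N_w^{(L)})\ge 1-\epsilon$ with the right side finite a.s., letting $\epsilon\to 0$ suffices.
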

\textbf{Step 2} The proof of \cite[Theorem 16]{JP14} proves the one dimensional version of the above statement. For the benefit of the reader we use similar notations since the arguments are quite similar with necessary generalizations. The basis of the argument is careful counting followed by the union bound.  
\begin{proof}[Proof of Lemma \ref{lem:existence}]  

  Fix $L > K$. Consider a word $w$ of size $l \ge L$ in an atom of $\chi$ born during $[0, S_0]$. Suppose a halving chain starting at $w$ reduces to a word $\bar\omega\in \Ww'_K$ by `time' $T_0$. Then, it has to halve at least $l-K$ times during $[0, T_0]$. To bound this probability, we recall the transition kernel of the halving chain given in Definition \ref{defn:mconwords}. An easier description of the halving probability is to attach independent exponential clocks of rate $1$ at every letter $i$ such that $w_i=w_{i+1}$, modulo the length. These are the positions that can be halved. Whenever a clock rings, we erase that letter and the state of the chain has jumped to a new word. Of course, the chain can be killed at any time, but since we are only interested in upper bounds on cycle counts, we can ignore this event. 
  
Let $E(l)$ be the event that for some word of length $l$ existing in $\chi$ during $[0,S_0]$ jumps at least $l-K$ times by `time' $T_0$. For $l \ge L$, word $w\in \Ww_l/D_{2l}$, and $I \subseteq [l]$, let $F(w,I)$ denote the event that there exists an atom during $[0, S_0]$ of word $w$ such that the  halving chain starting from it deletes all the vertices in $I$ by `time' $T_0$.

Then by union bound
\eq\label{eq:firstunion1}
\P\left( E(L)  \right) \le \sum_{w, I} \P\left[  F(w,I)  \right],
\en
where the sum is over all words of length $ l \ge L$ and all possible subsets $I \subset [l]$ such that $\abs{I} =l-K$, $w_i=w_{i+1}$ for $i\in I$. Note that such a set $I$ need not exist for all $w$ of length $l$. 
We now bound $P\left[  F(w,I)  \right]$. Let $H_w$ denote the number of atoms $(z,v,w)$ in $\chi$ that exists at any time during $[0, S_0]$. By definition of the halving chain in Definition \ref{defn:mconwords} and union bound again, we get 
\eq\label{eq:secondunion1} 
\P\left[  F(w,I)  \right]\le \left( 1 - e^{-T_0} \right)^{l-K} \E\left( H_w \right). 
\en 
since for every atom the chance that the vertices corresponding to $I$ will be deleted by `time' $T_0$ and the expected number of atoms of with word $w$ during $[0, S_0]$ is $\E\left( H_w \right)$.  
Now $\chi$ is a PPP whose intensity is given in Definition \ref{defn:limitpoisson}. Let $H_w$ consists of atoms that are born during $(0, S_0]$ and those born before $0$ but exists at a positive time. The expected number of the former is $2b(w)/h(w) S_0 \le 2 \abs{w} S_0$. For the latter we observe the following fact that $N_w(0,\cdot)$ is a \bad chain running in stationarity. 
By Lemma \ref{lem:dimzero} the distribution of the number of atoms of word $w$ that exists at $0$ is Poisson$(1/h(w))$. Hence, the expected value is at most one.  Combining the two, we get 
\eq\label{eq:somecount}
\E\left( H_w  \right) \le 2 S_0 \abs{w} + 1. 
\en  

All that remains is to bound the number of possible $w$ and $I$ over which the sum in \eqref{eq:firstunion1} runs. Now the number of $w,I$ pairs with $|w|=l$ is at most ${(2d)}^{K}l^{K}$. See \cite[page 1416]{JP14}. We include the short proof for completeness. 
For any pair $w,I$ let $u=u_1u_2\ldots u_K$ be the word of length $K$ obtained from $w$ after deleting the vertices in $I$. Hence $w$ must necessarily look like 
\begin{equation}\label{candidate}
{{\underbrace{u_1\ldots u_1}} \atop a_1 \mbox{ times}}\quad{{\underbrace{u_2\ldots u_2}}\atop{a_2 \mbox{ times}}}\,\, \ldots \,\, {{ \underbrace{u_K\ldots u_K}}\atop{a_K \mbox{ times}}}.
\end{equation}
The total number of choices for $u$ is at most ${(2d)}^K$ and the total number of choices of $a_1,a_2\ldots a_K$ is at most $l^{K}$.

Thus combining this and \eqref{eq:somecount} we get
\eq \label{sum1}
\P\left[  E(L) \right] \le  {(2d)}^{K}\sum_{l=L}^\infty \left( 2S_0 l + 1  \right) l^{K} \left(  1- e^{-T_0} \right)^{l-K}.  
\en
The right side of the above bound is summable. Hence, one can find $L$ large enough such that it is smaller than any $\epsilon >0$. 

Outside this event of probability at most $\epsilon$, no atom of word length more than $L$ contributes to $N_w(t,s)$, $(t,s,w)\in R$. This proves \eqref{eq:LapproxN}. The almost sure finiteness follows immediately since $\sup_{(t,s,w)\in R}N^{(L)}_w(t,s)$ is obviously finite for every $L$. 
\end{proof}

\begin{defn}[The doubling chain]\label{defn:doubling} The doubling chain is a time homogenous Markov process on the state space $\Ww'$ with the following transition kernel. Let $u\in \Ww_{k}/D_{2k}$ and $w\in \Ww_{k+1}/D_{2k+2}$ be two words such that $w$ can be obtained from $u$ by doubling $a$ different letters. Then, the chain jumps from $u$ to $w$ at rate $a$.
\end{defn}

An easier description of doubling chain is to attach i.i.d. exponential one random clocks to every letter of the word. Whenever a clock rings, the corresponding letter doubles. In particular, if $Y$ is a doubling chain, then $\abs{Y}$ is a Yule process.

The next lemma shows how the doubling chain can be thought of as the time-reversal of the halving chain.

\begin{lemma}\label{lem:forward}
Consider the limiting field in Definition \ref{defn:limitpoisson}. Fix any $s$. The process $\left(  N_w(t,s),\; t\in \rr, w\in \Ww'  \right)$ is a time homogeneous Markov process with respect to the natural filtration running in stationarity. Looked backwards in `time', the process $\left(  N_w(-t,s),\; t\in \rr, w\in \Ww'  \right)$ is again a time homogenous Markov process running in stationarity. Forward in `time', the process counts existing atoms of the PPP $\chi$ performing independent doubling chains while new atoms get born independently. Backward in `time', individual atoms perform independent halving chains. In particular, for any $t\in \rr$, the distribution of the vector $\left(  N_w(t,s), w\in \Ww'  \right)$ is always the product of Poisson$(1/h(w))$, $w\in \Ww'$.
\end{lemma}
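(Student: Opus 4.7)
Fix $s\ge 0$. The strategy is to express the field at fixed $s$ as a superposition of independent halving chains driven by a Poisson initial configuration; both the backward Markov structure and the forward description then follow. By memorylessness of the exponential lifetimes in $\chi$, the set $\Lambda^*(s)$ of atoms of $\chi$ existing at $s$ is itself a Poisson point process whose projection onto $\Ww'$ gives independent $\mathrm{Poisson}(1/h(w))$ counts (recovering Lemma \ref{lem:dimzero}). Attaching to each atom $(z,v,w)\in\Lambda^*(s)$ its independent halving chain $X_{z,v,w}(u)$ from Definition \ref{defn:limitpoisson}, we have
\[
N_{\bar\omega}(t,s) \;=\; \sum_{(z,v,w)\in\Lambda^*(s)} \mathbf{1}\{X_{z,v,w}(-t) = \bar\omega\}.
\]
Since distinct atoms carry independent Markov chains, the joint process $(N_w(t,s))_{w\in\Ww'}$ is time-homogeneous Markov in $t\le 0$ and evolves backward in $t$ exactly as claimed: each atom independently runs the halving chain of Definition \ref{defn:mconwords}.

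Stationarity amounts to showing that the product Poisson law with intensity $\mu(w)=1/h(w)$ is invariant under this backward evolution. By Poisson thinning this reduces to the balance equation at each $w'\in\Ww'$:
\[
\frac{|w'|}{h(w')} \;=\; \sum_{\substack{|w|=|w'|+1 \\ w\to w'}} \frac{r_h(w,w')}{h(w)},
\]
where $r_h(w,w')$ denotes the halving rate of Definition \ref{defn:mconwords}. This follows from the combinatorial identity $r_h(w,w')\,h(w')=r_d(w',w)\,h(w)$, where $r_d(w',w)$ is the doubling rate of Definition \ref{defn:doubling}, together with the trivial count $\sum_w r_d(w',w)=|w'|$ (one doubling per letter of $w'$). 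Both sub-identities describe how the dihedral quotient $\Ww_k/D_{2k}$ interacts with insertion or deletion of a single letter, and are essentially contained in \cite{JP14}.

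The forward-in-$t$ description then follows by time-reversing this stationary Markov process. A backward halving $w\to u$ at rate $r_h(w,u)$ reverses to a forward transition $u\to w$ at rate $r_h(w,u)\,h(u)/h(w)=r_d(u,w)$, which is precisely the doubling-chain rate. A backward killing $w\to\Delta$ at rate $|w|-c(w)$ reverses to a spontaneous, independent birth of an atom at word $w$. Combining, the forward picture is as stated: existing atoms double while new atoms are born independently. The process extends from $t\le 0$ to $t\in\rr$ via the established stationarity and Markov property by Kolmogorov extension (equivalently, by re-anchoring the construction at any dimension $T$ using the $z$-shift invariance of $\chi$); every $t$-marginal is the product Poisson law.

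The main obstacle is the combinatorial identity $r_h(w,u)\,h(u)=r_d(u,w)\,h(w)$, which simultaneously powers the invariance of $\mu$ and the identification of time-reversed rates with the doubling chain. Once this is in place, the remainder is routine Poisson thinning and Markov time-reversal.
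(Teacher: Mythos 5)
Your proof is essentially correct and fills in, from first principles, the content that the paper handles by simply citing Lemmas~12 and~13 of \cite{JP14}. The decomposition into independent halving chains seeded at the Poisson configuration $\Lambda^*(s)$, the Poisson-preservation argument, and the Markov time-reversal are all the right moves, and the balance equation you write down is the correct one. The key engine is the infinitesimal duality identity $r_h(w,u)\,h(u)=r_d(u,w)\,h(w)$; this is exactly what the paper later records (via indicator functions and differentiating at $t=0$) as the content of \cite[Lemma~9]{JP14} in the remark following Lemma~\ref{lem:dual}, so you are correctly identifying the load-bearing combinatorial fact that both \cite{JP14} and the paper rely on. Two small points you should be careful about: (a) your balance argument implicitly requires that the total exit rate from \emph{every} word is $|w'|$, including for $|w'|=1$, which forces the convention that halving a length-one word sends the chain to $\Delta$ (otherwise, length-one words absorb mass and the product Poisson law is not stationary) --- this is consistent with the rate count $c(w')+(|w'|-c(w'))=|w'|$, but it is worth stating since length-one words have no valid halving target in $\Ww'$; and (b) since $\Ww'$ is infinite, the superposition-of-chains decomposition needs the a.s.\ finiteness of the counts from Lemma~\ref{lem:existence} (this is precisely the `extension to $L=\infty$' the paper's one-line proof alludes to), and it would be cleaner to invoke it explicitly. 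Neither point is a genuine gap, but both deserve a sentence.
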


\begin{proof}
This is essentially the equivalence of \cite[Lemma 12]{JP14} and \cite[Lemma 13]{JP14}. The argument has been shown for words of bounded size (the bound is called $L$ in those Lemmas). The current lemma follows by an easy extension to $L=\infty$. 
\end{proof}

 Recall the notion of (weak) duality for Markov processes from \cite[Part III]{GS84}.

\begin{lemma}\label{lem:dual} Let $\left(Q_u,\; u\ge 0 \right)$ denote the sub-Markovian transition operator for the halving chain on $\Ww'$, and let $\left(\hQ_u,\; u\ge 0 \right)$ denote the Markovian transition operator of the doubling chain. Consider the measure $\mu_h$ on $\Ww'$ such that $\mu_h(\{w\})=1/h(w)$ for all $w\in \Ww'$. Then $Q$ and $\hQ$ are dual with respect to $\mu_h$. In other words, suppose $q_u(w,\bar{w})$ and $\hq_u(\bar{w}, w)$ are the transition probabilities corresponding to $Q_u$ and $\hQ_u$, respectively. Then, for any two non-negative functions $f, g$ on $\Ww'$, we have
\eq\label{eq:duality}
\sum_{\bar{w}\in \Ww'} \frac{f(\bar{w})}{h(\bar{w})} \sum_{w\in \Ww'} g(w)\hq_u(\bar{w}, w) = 
\sum_{w\in \Ww'} \frac{g(w)}{h(w)} \sum_{\bar{w}\in \Ww'} f(\bar{w})q_u(w, \bar{w}).
\en
\end{lemma}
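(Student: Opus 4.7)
The plan is to reduce the duality \eqref{eq:duality} to a detailed-balance relation at the generator level and then verify that by a combinatorial identity on dihedral orbits. Testing \eqref{eq:duality} against indicators $f = \mathbf{1}_{\{\bar w_0\}}$, $g = \mathbf{1}_{\{w_0\}}$ shows that \eqref{eq:duality} is equivalent to the pointwise kernel identity
\[
\frac{\hq_u(\bar{w}, w)}{h(\bar{w})} \;=\; \frac{q_u(w, \bar{w})}{h(w)}
\]
for every $u \ge 0$ and $w, \bar{w} \in \Ww'$. Both chains are continuous-time Markov chains on the countable state space $\Ww' \cup \{\Delta\}$, so by a standard Kolmogorov-equation argument this pointwise identity is implied by (and implies) its $u$-derivative at zero, namely the generator-level relation
\[
\frac{\hat{L}(\bar{w}, w)}{h(\bar{w})} \;=\; \frac{L(w, \bar{w})}{h(w)} \qquad \text{for all } w, \bar{w} \in \Ww',
\]
where $L$ and $\hat{L}$ denote the generators of the halving and doubling chains respectively.

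Both generators have off-diagonal entries only between words differing in length by one, so the non-trivial case of this identity is $|w| = k$, $|\bar{w}| = k-1$ with $\bar{w}$ a halving of $w$. Definitions \ref{defn:mconwords} and \ref{defn:doubling} then give $L(w, \bar{w}) = j$ and $\hat{L}(\bar{w}, w) = a$, reducing the whole lemma to the purely combinatorial identity
\[
a \cdot h(w) \;=\; j \cdot h(\bar{w}).
\]
I would prove this by double counting the set $\mathcal{P}$ of \emph{pointed halvings}, i.e., pairs $(W, i)$ with $W \in \Ww_k$ a linear representative of the $D_{2k}$-orbit of $w$ and $i$ a position satisfying $W_i = W_{i+1 \bmod k}$ whose halving produces a linear representative of the $D_{2(k-1)}$-orbit of $\bar{w}$. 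Projecting $\mathcal{P}$ onto the first coordinate uses the orbit-size formula $2k/h(w)$ together with the fact that the halvings of $w$ yielding $\bar{w}$ form exactly $j$ dihedral classes; projecting instead onto the halved word uses $2(k-1)/h(\bar{w})$ together with the $a$ inequivalent doublings of $\bar{w}$ yielding $w$. Writing the two counts and cancelling common orbit-stabilizer factors recovers the identity.

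I expect the main obstacle to be the orbit-stabilizer bookkeeping in this double count: $D_{2k}$ acts on $\mathcal{P}$ by its joint action on $W$ and on the position $i$, and one must track carefully how the reflections interact with the distinguished double-letter pair on the $w$-side and with the distinguished doubled letter on the $\bar{w}$-side, confirming that the normalizing factors line up so that only $h(w)$ and $h(\bar{w})$ survive. As a conceptual sanity check, the duality is consistent with Lemma \ref{lem:forward}: at fixed $s$ the Poisson field $\{N_w(t,s)\}_w$ is stationary in $t$ with intensity measure $\mu_h$, forward-in-$t$ atoms evolve by doubling and backward-in-$t$ atoms evolve by halving; duality with respect to $\mu_h$ is exactly the assertion that the forward and backward individual-atom dynamics are mutually time-reversed under this common intensity, which that lemma provides for free.
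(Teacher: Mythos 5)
Your reduction of the duality \eqref{eq:duality} to a weighted detailed-balance identity at the generator level, and the bootstrap back to the transition kernels by exponentiating the generators, is sound, and it is a genuinely different route from the paper's. The paper deduces \eqref{eq:duality} probabilistically from Lemma~\ref{lem:forward}: atoms of the stationary PPP with intensity $\mu_h$ evolve forward by doubling and backward by halving, and one computes the expectation of $F(\alpha,\gamma)=f(\alpha)g(\gamma)$ over atomic pairs in two ways. Your argument is more elementary, and the target identity $a\,h(w)=j\,h(\bar w)$ is correct --- indeed it is precisely \cite[Lemma~9]{JP14}, which the remark immediately following this lemma identifies as the $u$-derivative of \eqref{eq:duality} at $u=0$. (The diagonal terms also match: both chains have total exit rate $|w|$ from any word $w$, since the halving chain leaves at rate $c(w)+(|w|-c(w))$ and the doubling chain at rate $|w|$, so the off-diagonal identity is all that is needed.)

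The double count of $\mathcal{P}$ as you have set it up, however, does not give the identity. Projecting on the first coordinate correctly gives $|\mathcal{P}|=\tfrac{2k}{h(w)}\,j$, but the fibers of the halving map over the $2(k-1)/h(\bar w)$ representatives of $\bar w$ are \emph{not} uniformly of size $a$: equating your two projection counts gives $\tfrac{2k}{h(w)}\,j=\tfrac{2(k-1)}{h(\bar w)}\,a$, which after cancelling is off from $\tfrac{j}{h(w)}=\tfrac{a}{h(\bar w)}$ by a factor $k/(k-1)$. Concretely, take $w=\pi_1\pi_1\pi_2$ and $\bar w=\pi_1\pi_2$, so $h(w)=h(\bar w)=1$, $j=a=1$, $k=3$, and $|\mathcal{P}|=6$; the four representatives $\pi_1\pi_2,\ \pi_2\pi_1,\ \pi_2^{-1}\pi_1^{-1},\ \pi_1^{-1}\pi_2^{-1}$ of $\bar w$'s orbit are hit with multiplicities $1,2,2,1$, not uniformly. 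The repair is to compare within single representatives rather than over the full orbits: fix one $W$ representing $w$; the $j$ valid halving positions of $W$ are a set permuted \emph{freely} by the rotational stabilizer of $W$ in $D_{2k}$, which has order exactly $h(w)$ by the orbit-size formula $2k/h(w)$ (in particular no reflection stabilizes a cyclically reduced word), giving $j/h(w)$ orbits. Likewise the $a$ doubling positions of a fixed $\bar W$ give $a/h(\bar w)$ orbits under the order-$h(\bar w)$ stabilizer of $\bar W$, and these two orbit sets are in bijection because every halving move is the inverse of a doubling move. That count, not the naive one over all representatives, yields $j/h(w)=a/h(\bar w)$ cleanly.
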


\begin{proof} This follows from Lemma \ref{lem:forward}. The idea is to consider our counting processes $\left(N_w(\cdot,0),\; w\in \Ww'\right)$ during any interval of dimension of length $u$. Let us call this interval $[0,u]$, \text{increasing} in dimension from $0$ to $u$. Then atoms exist as a PPP on $\Ww'$ with intensity $\mu_h$ at `time' zero. From every atom we run an independent doubling chain till `time' $u$. During interval $(0,u)$ new atoms arrive at a certain rate and we start independent doubling chains, all stopped eventually at `time' $u$. Then, the distribution of counts of various words at `time' $u$ is again a PPP with intensity $\mu_h$. By Lemma \ref{lem:forward} the paths of atoms backward in `time' is exactly the halving chain.    

Now, consider a pair of words $(\bar{w}, w)$ as an atom itself, representing an atom existing at $\bar{w}$ at `time' zero that moves to $w$ at time $u$. If there is an atom $w$ at `time' $u$ which has no pre-image at `time' $0$, we denote it by $(\Delta, w)$, where $\Delta$ is the cemetery of the halving chain. Extend the function $f$ to $\Ww'\cup \{\Delta\}$ by taking $f(\Delta)=0$. 
Define a function $F$ on $\Ww'\cup \{\Delta \} \times \Ww'$ as $F(\alpha,\gamma)=f(\alpha) g(\gamma)$. 
Then, we can count the expected value of the sum of $F$ applied to every such atomic pairs in two ways: one, forward in `time', and the other, backward in `time'. The two sides must coincide, and this proves \eqref{eq:duality}.  
\end{proof}
\begin{rmk}
Equation \eqref{eq:duality} is a generalization of \cite[Lemma 9]{JP14} which can be recovered by taking indicator functions $f=\delta_u$, $g=\delta_w$ for any pair of words $u,w$ and then taking derivative with respect to $t$ at $t=0$.
\end{rmk}

The following proposition describes the evolution of limiting cycle counts in time at any dimension. Consider the PPP $\chi$ described in Definition \ref{defn:limitpoisson}. Fix any $t \le 0$. Given any atom $(z,v,w)$ of $\chi$, consider the independent halving chain that starts from that atom. Suppose the state of that halving chain at `time' $-t$ is $\bar{w}$. Then, extend the atom $(z,v,w)$ to $(z,v,w,\bar{w})$, and consider the point process $\chi(t):=\sum \delta_{(z,v,\bar{w})}$ by dropping the original word $w$. Clearly, $\left(N_w(t,\cdot),\; w\in \Ww'\right)$ is a function of $\chi(t)$. 

\begin{prop}\label{prop:dimmarginal}
The point process $\chi(t)$ is a PPP on $(-\infty, \infty) \times [0, \infty) \times \Ww'$ with an intensity measure that has a density with respect to the product of the Lebesgue measure on $(-\infty, \infty)\times [0, \infty)$, and the counting measure on $\Ww'$. At a section $\{z\}\times [v,\infty)\times \{w\}$ on the state space, the rate is given by 
\eq\label{eq:rateblife}
{r}_t(v, \bar{w})= \frac{2}{h(\bar{w})} \E_{\bar{w}}\left(  b(Y_t) e^{-2b(Y_t) v}  \right),
\en
where $Y_t$ is a doubling chain at `time' $-t$ and $\E_{\bar{w}}$ denotes expectation with starting state being $\bar{w}$. In particular, the rate is stationary in the time-coordinate $z$. The birth rate of atoms with word $\bar{w}$ at any time is exactly
\eq\label{eq:ratebt}
r_t(\bar{w}):= r_t(0,\bar{w}) = \frac{2}{h(\bar{w})} \left(b(\bar{w}) - \abs{w} + e^{-t} \abs{w}   \right), \quad t\le 0.
\en
\end{prop}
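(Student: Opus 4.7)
The plan is to realize $\chi(t)$ as the projection of a marked Poisson point process and then apply the duality relation in Lemma~\ref{lem:dual}. Since $\chi$ is a PPP and the halving chains attached to distinct atoms are mutually independent and independent of $\chi$, the PPP marking theorem applied to the map $(z,v,w)\mapsto(z,v,w,X_{z,v,w}(-t))$ produces a PPP on $(-\infty,\infty)\times[0,\infty)\times\Ww'\times(\Ww'\cup\{\Delta\})$. Projecting onto the first, second, and fourth coordinates and discarding the atoms sent to $\Delta$ yields $\chi(t)$, which is therefore itself a PPP. Local finiteness of its intensity is guaranteed by Lemma~\ref{lem:existence}, since only finitely many source words contribute to any bounded rectangle.

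To compute the intensity, I integrate out the source word. The density of $\chi$ with respect to $dz\,dv$ times counting measure on $\Ww'$ at $(z,v,w)$ is $(2b(w))^2 e^{-2b(w)v}/h(w)$, so the density of $\chi(t)$ at $(z,v,\bar{w})$ is
\[
\sum_{w\in\Ww'}\frac{(2b(w))^2}{h(w)}e^{-2b(w)v}\,q_{-t}(w,\bar{w}).
\]
Integrating the $v$-coordinate over $[v,\infty)$ gives the rate per $dz$ on the section $\{z\}\times[v,\infty)\times\{\bar{w}\}$:
\[
\sum_{w\in\Ww'}\frac{2b(w)}{h(w)}e^{-2b(w)v}\,q_{-t}(w,\bar{w}).
\]
Setting $f=\delta_{\bar{w}}$ and $g(w)=2b(w)e^{-2b(w)v}$ in Lemma~\ref{lem:dual}, the right-hand side of \eqref{eq:duality} is the display above, while the left-hand side equals
\[
\frac{1}{h(\bar{w})}\sum_{w\in\Ww'}2b(w)e^{-2b(w)v}\,\hq_{-t}(\bar{w},w)=\frac{2}{h(\bar{w})}\E_{\bar{w}}\!\left[b(Y_{-t})e^{-2b(Y_{-t})v}\right],
\]
which is exactly \eqref{eq:rateblife}. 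Translation invariance in $z$ is inherited from that of $\chi$ together with the time-homogeneity of the halving kernel.

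For the birth rate \eqref{eq:ratebt}, I substitute $v=0$ and evaluate $\E_{\bar{w}}[b(Y_{-t})]$ via a combinatorial invariance: doubling position $i$ in $u_1\cdots u_k$ to obtain $u_1\cdots u_i u_i u_{i+1}\cdots u_k$ creates exactly one new pair of adjacent identical letters (the inserted $u_i u_i$) and alters no other sign relation, since the letter following the duplicate is still $u_{i+1}$ preceded by $u_i$. Hence both $b$ and $|\cdot|$ increase by one at every step of the doubling chain, so $b(Y_s)-|Y_s|\equiv b(\bar{w})-|\bar{w}|$. As $|Y_s|$ is a Yule process starting at $|\bar{w}|$ with $\E|Y_s|=|\bar{w}|e^{s}$, setting $s=-t$ gives $r_t(\bar{w})=\frac{2}{h(\bar{w})}\bigl(b(\bar{w})-|\bar{w}|+e^{-t}|\bar{w}|\bigr)$, matching \eqref{eq:ratebt}.

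I expect the main obstacle to be the clean conversion from a sum weighted by the halving transitions $q_{-t}(w,\bar{w})$, where $w$ ranges over unboundedly many source words, into an expectation over the doubling chain started at the surviving word $\bar{w}$. The duality of Lemma~\ref{lem:dual} is precisely what makes this substitution work, and once it is invoked the remainder is a routine marked PPP computation together with the elementary invariance of $b-|\cdot|$ under doubling.
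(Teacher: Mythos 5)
Your proof is correct and follows essentially the same route as the paper: establishing the PPP property via marking/thinning, applying the duality of Lemma~\ref{lem:dual} with $f=\delta_{\bar{w}}$ and $g(w)=2b(w)e^{-2b(w)v}$ to convert the sum over source words into an expectation over the doubling chain started at $\bar{w}$, and then using that $b(\cdot)-|\cdot|$ is preserved by each doubling step so that the Yule process moment $\E|Y_{-t}|=|\bar{w}|e^{-t}$ yields \eqref{eq:ratebt}. The only cosmetic differences are that you derive the lifetime formula first and specialize to $v=0$ (the paper does the reverse order) and that you correctly write $|\bar{w}|$ where the paper's display has the typo $|w|$.
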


\begin{proof}[Proof of Proposition \ref{prop:dimmarginal}] 
The fact that $\chi(t)$ is a PPP is a consequence of Poisson thinning. Pick a word $\bar{w}$. The word of every atom $(z,v,w)\in\chi$ has a certain probability of producing $(z,v,\bar{w})\in \chi(t)$ independent of every other atom. This proves independent Poisson counts over disjoint rectangles, and hence the claim.  

We will be done once we compute the intensity measure of this PPP. It is obvious from the structure of $\chi$ that the intensity measure is translation invariant in time. Hence, we can restrict ourselves to computing rates at time $0$. 
 
Let us first evaluate the birth rates of atoms with words $\bar{w}$ at dimension $t$. By time stationarity, this is function of $(t, \bar{w})$, which we will refer to as $r_t(\bar{w})$. It follows from the PPP structure that 
\[
r_t(\bar{w})= \sum_{w\in \Ww'} \frac{2b(w)}{h(w)} q_{-t}(w, \bar{w}),
\]
where $q_{-t}$, as in Lemma \ref{lem:dual} is the transition density of the halving chain. 

This allows us to express $r_t(\bar{w})$ as the right side of \eqref{eq:duality} by taking $f=\delta_{\bar{w}}$, the indicator of the word $\bar{w}$, and $g(w)=2b(w)$. Thus, from the left side of \eqref{eq:duality} we get
\[
r_t(\bar{w})= \frac{2}{h(\bar{w})} \E_{\bar{w}}\left( b\left(Y_t\right)   \right),
\]
where $Y$ is a doubling chain and $Y_t$ refers to state of the chain at `time' $-t$. 

But, every doubling increases both the size of the word and the value of $b$ by exactly one. Thus $b(Y_t)= b(\bar{w}) + \abs{Y_t} - \abs{\bar{w}}$. However, as discussed above, $\abs{Y}$ is a Yule process, and hence $\E\left( \abs{Y_t}  \right)= \abs{w} e^{-t}$. By substituting above, we get
\eq\label{eq:ycalc}
r_t(\bar{w})= \frac{2}{h(\bar{w})} \left(b(\bar{w}) - \abs{w} + e^{-t} \abs{w}   \right).
\en
\medskip

Let us now compute the joint intensity of birth and lifetimes of atoms with word $\bar{w}$ at dimension $t$. Suppose that a word $w$ at dimension $0$ gets reduced to word $\bar{w}$ at dimension $t$. The corresponding lifetime still remains exponential with rate $2b(w)$. In particular, we see that the lifetime of atoms at any dimension $t < 0$ is not exponential, unlike the case at dimension zero.   

In fact, the intensity measure of the pair birth and lifetime of an atom with word $\bar{w}$ can be easily seen from Poisson counting. Fix word $\bar{w}$ and $v> 0$. Let, as in the statement, ${r}_t(v,\bar{w})$ be the rate at which atoms with word $\bar{w}$ and lifetime in $[v, \infty)$ are getting created at dimension $-t$ and time zero. Then, as above,
\[
{r}_t(v, \bar{w})= \sum_{w\in \Ww'} \frac{2b(w)}{h(w)} e^{-2b(w)v} q_{-t}(w, \bar{w}). 
\]  
This is the right side of \eqref{eq:duality} when we take $f=\delta_{\bar{w}}$ and $g(w)=2b(w) e^{-2b(w) v}$. 

Therefore, by Lemma \ref{lem:dual}, if $Y_t$ is the state of a doubling chain at `time' $-t$, then 
\[
\begin{split}
{r}_t(v, \bar{w})&= \frac{2}{h(\bar{w})} \E_{\bar{w}}\left(  b(Y_t) e^{-2b(Y_t) v}  \right).
\end{split}
\]
The above can again be computed explicitly in terms of Yule processes, but this is unnecessary for our analysis. 
\end{proof}

\subsection{The topology of convergence} We have stochastic processes with multidimensional parameters. The topology of weak convergence that we choose to work with is described in \cite{neuhaus71}. This is a generalization of the usual Skorokhod space of RCLL paths. For the benefit of the reader we give a short informal introduction. For more details, please consult \cite{neuhaus71}.  

Let $I_1$, $I_2$ be two bounded and closed intervals in $\rr$. The space of surfaces that we will consider will be denoted by $D\left( I_1 \times I_2  \right)$. By shifting and scaling we can assume that $I_1=I_2=[0,1]$, and we will denote the corresponding space by $D$. 

To define elements in $D$, we define \textit{quadrants}. Fix any $(t,s)\in [0,1]\times [0,1]$. Then the four quadrants are the four open subsets of $[0,1] \times [0,1]$ given by removing the axes passing through $(t,s)$:  
\[
\left\{ [0,t)\times [0,s),\; [0,t) \times (s,1],\; (t,1] \times [0,s),\; (t,1]\times (s,1]     \right\}.
\]  
Some of these are empty when $(t,s)$ lies on the boundary of $[0,1]\times [0,1]$. 

We now generalize the RCLL property. For a function $f$ on $[0,1]\times[0,1]$, we say that its quadrant limits exist at $(t,s)$ if, for every non-empty quadrant $Q$ at $(t,s)$, and any sequence of points $\left\{ (t_n, s_n),\; n\in \NN  \right\}\subseteq Q$ such that $\lim_n (t_n, s_n)=(t,s)$, the quantity $\lim_n f(t_n, s_n)$ exists. 

This does not say anything about the value of the function at $(t,s)$. For every $(t,s)\in [0,1)\times [0,1)$, consider the special up-right quadrant $(t,1]\times (s,1]$. If either $s$ or $t$ is $1$, the special quadrant is given by considering the interval $[0,1)$ for that coordinate (instead of the empty set $(1,1]$). We say that the function $f$ is continuous is continuous from above at $(t,s)$ if the quadrant limit in the special quadrant is equal to the value $f(t,s)$.

We now define the space $D$ to be the space of all real valued functions on $[0,1] \times [0,1]$ which have quadrant limits and is continuous from above at every point. It follows that such functions are bounded, are RCLL in the traditional sense along every line parallel to the axes, and have at most countably many jumps.  

The Skorokhod topology on $D$ is an extension of the usual Skorokhod topology \cite[Section 2]{neuhaus71}. As usual, by defining a proper metric, the space can be turned to a complete separable metric space over which we can define weak convergence. We will provide citations as needed later. 

For the rest of the section we will need the product Skorokhod topology on $D^{\Ww'}$ or $D^\NN$. The notion of convergence in this product is pointwise convergence of every coordinate. In particular, we will use the fact that marginal tightness along every coordinate implies joint tightness. This is a consequence of Tychonoff's theorem.

\subsection{Gaussian limits and covariance computation} 

Recall the cycle counting field $\left( N_k(t,s)  \right)$ from \eqref{eq:whatisnk}. We now compute the asymptotic covariance of any pair of elements $(N_j(t_1, s_1), N_k(t_2, s_2))$, $t_1, t_2\in (-\infty,0]$, $s_1, s_2 \in [0, \infty)$, from this field, where the ordering of the pair implies $s_1 \le s_2$. This constitutes part of Theorem \ref{prop:covariance}. The next lemma computes this covariance for every $d$.

\begin{lemma}\label{lem:finitedcov} We have the following cases:
\begin{enumerate}[(i)]
\item If $j \neq k$ and $t_1=t_2$, then $\Cov\left( N_j(t_1,s_1), N_k(t_2, s_2)   \right)=0$. 
\item If $j\neq k$ and $t_1 \neq t_2$, then 
\[
0\le \Cov\left( N_j(t_1,s_1), N_k(t_2, s_2)   \right)\le c_0 (2d-1)^{j\wedge k},
\]
where the constant $c_0$ depends on $j,k, t_1, t_2, s_1, s_2$, but not on $d$. 
\item If $j=k$ and $t_1=t_2=t$, then  
\[
\Cov\left( N_j(t,s_1), N_j(t, s_2)   \right)= \frac{a(d,j)}{2j} \E\left[ \frac{1}{h(w)} e^{-2b(Y_t) s}     \right],
\]
where $\E$ represents the joint law of a word $w$ chosen uniformly at random from $\Ww_j/D_{2j}$ and $Y_t$ is the state of a doubling chain, starting from $w$, at `time' $-t$. 
\item Finally, if $j= k$ and $t_1\neq t_2$, then 
\[
\Cov\left( N_j(t_1,s_1), N_j(t_2, s_2)   \right)= \frac{a(d,j)}{2j} e^{-j\abs{t_1-t_2}}\E\left[ \frac{1}{h(w)} e^{-2b(Y_t) s}     \right],\; t=\max(t_1, t_2),
\]
where $\E$ is the probability measure described in (iii). 
\end{enumerate}
\end{lemma}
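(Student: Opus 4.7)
The plan is to express every covariance as a Campbell-type single-atom integral over the Poisson intensity of $\chi$ and then apply the duality of Lemma~\ref{lem:dual} to rewrite the resulting halving-chain sums in terms of the doubling chain. Writing $N_j(t,s)=\sum_{\alpha}F_j^{t,s}(\alpha)$ over atoms $\alpha=(z,v,w)$ of $\chi$ with $F_j^{t,s}(\alpha)=\mathbf{1}\{z\le s\le z+v\}\mathbf{1}\{X_\alpha(-t)\in\Ww_j/D_{2j}\}$, and using independence of the halving chains across atoms, the standard second-moment identity for Poisson functionals with independent marks yields the master formula
\[
\Cov\bigl(N_j(t_1,s_1),N_k(t_2,s_2)\bigr)=\int \E\bigl[F_j^{t_1,s_1}(\alpha)F_k^{t_2,s_2}(\alpha)\bigr]\,d\mu(\alpha).
\]

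Next I would carry out the $(z,v)$ integration to expose the dependence on $s:=|s_1-s_2|$. An atom is present at both $s_1$ and $s_2$ iff $z\le s_1\wedge s_2$ and $z+v\ge s_1\vee s_2$; integrating the Lebesgue $dz$ against the exponential density $2b(w)e^{-2b(w)v}\,dv$ over this region yields $e^{-2b(w)s}/(2b(w))$. Multiplying by the marginal birth rate $2b(w)/h(w)$ and pushing the expectation through (the halving chain depends on $\alpha$ only through its word $w$) gives the key working identity
\[
\Cov\bigl(N_j(t_1,s_1),N_k(t_2,s_2)\bigr)=\sum_{w\in\Ww'}\frac{e^{-2b(w)s}}{h(w)}\,\P_w\!\bigl[X(-t_1)\in\Ww_j/D_{2j},\,X(-t_2)\in\Ww_k/D_{2k}\bigr].
\]

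For (i), at $t_1=t_2$ with $j\ne k$ the event inside the probability is impossible, so every summand vanishes. For (iii), with $j=k$ and $t_1=t_2=t$ the inner probability is $q_{-t}(w,\Ww_j/D_{2j})$; Lemma~\ref{lem:dual} applied with $u=-t$, $f=\mathbf{1}_{\Ww_j/D_{2j}}$ and $g(w)=e^{-2b(w)s}$ converts the halving sum into a doubling expectation, producing $\sum_{\bar w\in\Ww_j/D_{2j}}\tfrac{1}{h(\bar w)}\E_{\bar w}[e^{-2b(Y_t)s}]$, which equals $\tfrac{a(d,j)}{2j}\E[h(w)^{-1}e^{-2b(Y_t)s}]$ with $w$ uniform on $\Ww_j/D_{2j}$, since $|\Ww_j/D_{2j}|=a(d,j)/(2j)$. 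For (iv) with $j=k$ and (WLOG) $t_1>t_2$, the observation is that halving strictly decreases word length while the total departure rate from any length-$j$ word is exactly $j$ (halving rate $c(w)$ plus killing rate $j-c(w)$); by the Markov property, staying in $\Ww_j/D_{2j}$ at both $-t_1$ and $-t_2$ forces no jumps in the intervening interval, contributing a factor $e^{-j|t_1-t_2|}$ and reducing the computation to case (iii) at $t=\max(t_1,t_2)$.

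For case (ii) I would sum first over the terminal (shorter-length) endpoint word. Assuming WLOG $t_1>t_2$ and $j>k$ (the other orientation is vacuous since halving is length-monotone), drop the event $\{X(-t_1)\in\Ww_j/D_{2j}\}$ and bound the inner probability by $q_{-t_2}(w,\hat w)$ for each fixed $\hat w\in\Ww_k/D_{2k}$; Lemma~\ref{lem:dual} with $f=\delta_{\hat w}$ and $g(w)=e^{-2b(w)s}$ collapses the $w$-sum to $\tfrac{1}{h(\hat w)}\E_{\hat w}[e^{-2b(Y_{-t_2})s}]\le 1$. Summing over the $|\Ww_k/D_{2k}|=a(d,k)/(2k)=O((2d-1)^k)$ choices of $\hat w$ produces the required $O((2d-1)^{j\wedge k})$ bound, and non-negativity is immediate from the master formula. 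The main technical subtlety is choosing the correct endpoint to dualize in (ii): dualizing at the shorter word yields the desired $(2d-1)^{j\wedge k}$ growth, whereas the other choice would only yield the useless $(2d-1)^{j\vee k}$.
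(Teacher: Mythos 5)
Your proof is correct but proceeds by a genuinely different route from the paper's. The paper treats each case separately using Poisson-thinning decompositions: for (i) disjoint word sets; for (ii) a monotone bound $\cov(N_j(t_1,s_1),N_k(t_2,s_2))\le\cov(N_j(t_1,0),N_k(t_2,0))$ followed by citing \cite[Corollary 17]{JP14}; for (iii) writing the covariance as $\Var\left(N_j(t,[0,s])\right)$ for a Poisson count and integrating the rate ${r}_t(v,\bar w)$ from Proposition~\ref{prop:dimmarginal}; and for (iv) expressing it as $\Var(B)$ where $B$ is a Poisson-mixed Binomial with success probability $e^{-j|t_1-t_2|}$ given by the no-jump event. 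Your version instead derives a single master formula from the Campbell second-moment identity for the marked PPP $\chi$, performs the $(z,v)$-integration once to expose the $e^{-2b(w)s}/h(w)$ kernel, and then reduces each case by duality (Lemma~\ref{lem:dual}), by dropping an event, or by the same no-jump factorization. This is more uniform and avoids the appeal to \cite{JP14}; in (iv) you and the paper exploit the same observation (total departure rate from a length-$j$ word is $j$), just packaged differently. Two small notational slips worth fixing: in case (ii) the object you call $Y_{-t_2}$ should be $Y_{t_2}$ in the paper's convention ($Y_t$ denotes the doubling chain at time $-t$), and the phrase about the other orientation being ``vacuous'' should be unpacked: after choosing $t_1>t_2$ WLOG by symmetry, the subcase $j<k$ gives a literally zero covariance because halving is length-decreasing, so one then restricts attention to $j>k$.
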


\begin{proof}[Proof of Lemma \ref{lem:finitedcov}] It follows from time stationarity, that, without loss of generality, we can replace the pair $(s_1, s_2)$ by $(0, s)$, where $s=s_2-s_1$.

Consider case (i). Let $t_1=t_2=t$. Suppose $j\neq k$.  then $N_j$ and $N_k$ counts atoms of the PPP $\chi(t)$ over disjoint collections of words. Therefore, they are independent and has zero covariance. 

Consider case (ii). We compute $\cov\left(N_j(t_1,0), N_k(t_2,s)\right)$. There are to sub-cases: either $t_1 < t_2$ or $t_1 > t_2$. Since the PPP $\chi$ is time-reversible, these two cases are symmetric. Hence, without loss of generality, we consider the case of $t_1 < t_2$. 

Consider $\chi(t_2)$. The atoms counted in $N_j(t_1,0)$ are obtained as a Poisson thinning of atoms of $\chi(t_2)$ that exist at time zero.  The atoms counted in $N_j(t_2,s)$ consists of two independent collections: those that exist at both time zero and time $s$, and those that were born after time zero but exist at time $s$.  Thus
\[
0\le \cov\left( N_j(t_1,0), N_k(t_2, s)   \right) \le \cov\left(  N_j(t_1,0), N_k(t_2,0)  \right).
\]
The last covariance, computed in \cite[Corollary 17]{JP14}, produces the bound. 

Now, consider case (iii). As before, we compute $\cov\left( N_j(t,0), N_j(t,s)  \right)$, where $s=s_2-s_1\ge 0$. Consider atoms of $\chi(t)$. The atoms counted in $N_j(t,0)$ can be classified in two groups: either existing simultaneously at both time $0$ and $s$, or not. The same holds for atoms counted in $N_j(t,s)$. By Poisson thinning 
\[
\cov\left( N_j(t,0), N_j(t,s)  \right)= \Var\left( N_j(t, [0,s])  \right),
\]
where $N_j(t,[0,s])$ is the number of atoms of $\chi(t)$ that exist simultaneously at both times $0$ and $s$. 

Define a Borel subset $\Gamma \subseteq (-\infty, \infty) \times [0, \infty) \times \Ww'$ by 
\[
\Gamma:=\left\{  (z,v,w):\; z\le 0, \; v \ge s-z,\; \abs{w}=j   \right\}.
\]
Then $N_j(t, [0,s])$ is the mass that the Poisson random measure $\chi(t)$ puts on $\Gamma$. In particular, it is a Poisson random variable whose expectation and variance are both given by the mass of the intensity measure on $\Gamma$.

The variance can now be computed using \eqref{eq:ratebt}:
\eq
\begin{split}
\Var&\left( N_j(t, [0,s])  \right)= \sum_{w\in \Ww_j/D_{2j}} \frac{2}{h(w)} \int_{-\infty}^0 \E_w \left[  b(Y_t) e^{-2b(Y_t) (s-z)}   \right]dz\\
&= \sum_{w\in \Ww_j/D_{2j}} \frac{2}{h(w)} \E_w\left[ b(Y_t) \int_{-\infty}^0 e^{-2b(Y_t) (s-z)} dz    \right], \quad \text{By Fubini-Tonelli}, \\
&= \sum_{w\in \Ww_j/D_{2j}} \frac{1}{h(w)} \E_w\left[ 2b(Y_t) \int_{s}^\infty e^{-2b(Y_t) u} du    \right] \\
&= \sum_{w\in \Ww_j/D_{2j}} \frac{1}{h(w)} \E_w\left(  e^{-2b(Y_t) s}   \right).
\end{split}
\en  
The claimed statement follows since the number of elements in $\Ww_j/D_{2j}$ is $a(d,j)/2j$. 

Finally, we consider case (iv). By symmetry, as in case (ii), we can assume $t_1 < t_2$. Again, by Poisson thinning, we can decompose both $N_j(t_1,0)$ and $N_j(t_1,s)$ as a sum of several independent Poisson random variables with exactly one common class counted in both of them. This is the count of all atoms in $\chi(t_2)$ that exist simultaneously at both times $0$ and $s$ at dimension $t_2$, and moreover, the halving chains starting from those atoms do not jump during `time' $[0, t_2-t_1]$. 
This is because other atoms counted in $N_j(t_1,0)$ either do not exist at time $s$ at dimension $t_2$, or must have descended from words of a bigger size at dimension $t_2$. Both collections are independent from atoms counted in $N_j(t_2,s)$.

Thus
\[
\cov\left( N_j(t_1,0), N_j(t_2,s)  \right)=\Var\left( B \right),
\]
where the random variable $B$ is Binomial, given $N_j(t_2,[0,s])$, with parameters $N_j(t_2,[0,s])$ and $\hat{p}$. Here $\hat{p}$ represents the probability that a halving chain starting from a word $w$, with $\abs{w}=j$, does not jump during `time' $t_2-t_1$. By definition, $\hat{p}= e^{-(t_2-t_1)j}$. Therefore, $B$ is Poisson with parameter $\hat{p} \E\left( N_j(t_2,[0,s])\right)$. Substituting the values from case (iii) computes the expression for its variance.  
\end{proof}

\begin{lemma}\label{lem:randomsign}
Fix $j\in \NN$. Let $W$ be a uniformly picked word in $\Ww_j/D_{2j}$. Then, as $d$ tends to infinity, the asymptotic law of $\abs{W}-b(W)$ is that of the number of sign changes along a $j$-cycle if we attach random i.i.d. $\pm 1$ at every vertex. The asymptotic law of $h(W)$ is the delta mass at one. In particular $(b(W), h(W))$ are asymptotically independent. 
\end{lemma}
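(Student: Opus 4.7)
The approach is to lift the sampling from $\Ww_j/D_{2j}$ to $\Ww_j$, where uniform sampling has a transparent combinatorial description, and to exploit that the $D_{2j}$-invariant statistics $h(W)$ and $\abs{W}-b(W)$ are insensitive to this lift once almost every class is primitive. An equivalence class $[w]$ has orbit size $2j/h([w])$, so the uniform measure on $\Ww_j/D_{2j}$ and the pushforward of the uniform measure on $\Ww_j$ differ only through reweighting by $h(\cdot)^{-1}$; the two agree in the limit as soon as the non-primitive part is negligible.

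First I would prove $h(W)\to 1$ in probability. Any class with $h([w])=m\ge 2$ is uniquely of the form $[u^m]$ with $[u]\in\Ww_{j/m}/D_{2j/m}$ primitive, so the count of non-primitive classes is at most $\sum_{m\mid j,\,m\ge 2}|\Ww_{j/m}/D_{2j/m}|=O((2d-1)^{j/2})$, which is negligible compared with $|\Ww_j/D_{2j}|\sim (2d-1)^j/(2j)$. This yields the second assertion of the lemma and simultaneously justifies the lift described above.

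Next I would determine the asymptotic joint law of the sign vector under the uniform measure $\mu$ on $\Ww_j$. Fix $\epsilon\in\{\pm 1\}^j$ and count words $W_i=\pi_{a_i}^{\epsilon_i}$ with this pattern: the cyclic-reduction constraint forces $a_i\ne a_{i-1}$ exactly at the $K:=K(\epsilon)$ cyclic positions where $\epsilon_{i-1}\ne\epsilon_i$, and is vacuous elsewhere. The constraint graph on $(a_1,\ldots,a_j)$ is therefore a subgraph of $C_j$ with $K$ edges, so proper $d$-colouring counts give $d^{j-K}(d-1)^K$ when $K<j$ and $(d-1)^j+(-1)^j(d-1)$ when $K=j$, both asymptotic to $d^j$ for fixed $j$. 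Each of the $2^j$ sign patterns therefore has asymptotic probability $2^{-j}$, so the signs become asymptotically i.i.d.\ uniform $\pm 1$, and $\abs{W}-b(W)$ converges in distribution to the number of sign changes along a cyclic i.i.d.\ $\pm 1$ sequence of length $j$, which is the random variable $\tau$ from Theorem \ref{prop:covariance}.

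The final step transfers the preceding conclusion from $\mu$ to the uniform measure on $\Ww_j/D_{2j}$ using the non-primitive estimate: the contribution of non-primitive classes to any bounded statistic is $O((2d-1)^{-j/2})$, while on the primitive part the two measures differ by the overall ratio $2j\cdot|\Ww_j/D_{2j}|/|\Ww_j|\to 1$. Since $h(W)$ converges in probability to the constant $1$, asymptotic independence of $(b(W),h(W))$ is automatic. The main obstacle I anticipate is the combinatorial bookkeeping in the sign-pattern count for the fully alternating case $K=j$ (a cycle rather than a union of paths), but this affects only a single sign pattern and is dominated by the generic $K<j$ contributions in the large-$d$ limit, so the execution should be routine.
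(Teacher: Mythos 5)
Your proof is correct, and the strategy is essentially the one the paper uses: pass from uniform sampling of classes in $\Ww_j/D_{2j}$ to a more tractable uniform sampling of letters, and exploit that in the large-$d$ limit the non-generic (non-primitive, or non-reduced) part is negligible. Where you differ is in how the second step is executed. The paper samples each of the $j$ letters i.i.d.\ uniform from $\{\pi_i^{\pm 1}\}$ (so the signs are i.i.d.\ $\pm 1$ by fiat), then observes by Markov's inequality that the conditioning on cyclic reducedness has probability $1-O(j/2d)\to 1$, so the conditional sign law is unchanged in the limit. You instead count directly: for each sign pattern $\epsilon$, the number of cyclically reduced words with that pattern is a proper-coloring count of a subgraph of $C_j$ with $K(\epsilon)$ edges, giving $d^{j-K}(d-1)^K$ (or $(d-1)^j+(-1)^j(d-1)$ if $K=j$), hence $\sim d^j$ uniformly in $\epsilon$, so each pattern has asymptotic probability $2^{-j}$. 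Both routes are valid and the counts match; your version is a little more self-contained and also makes explicit (via the $h(\cdot)^{-1}$ reweighting and the $O((2d-1)^{j/2})$ bound on non-primitive classes) a step the paper treats informally, namely that uniform on $\Ww_j/D_{2j}$ and the pushforward of uniform on $\Ww_j$ agree in total variation as $d\to\infty$.
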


\begin{proof}[Proof of Lemma \ref{lem:randomsign}]
One can imagine $W$ as a random pick from all possible cyclic words (up to equivalent classes) of length $j$, conditioned on being cyclically irreducible. A random pick from all possible cycles can be generated by picking i.i.d. elements from the $2d$ collection of letters $\{  \pi_i, \pi_i^{-1},\; i\in [d] \}$ at every edge of a $j$-cycle. The expected number of occurrences of successive letters $\pi_i \pi^{-1}$ or $\pi^{-1}\pi_i$ is $j/2d$. Therefore, by Markov's bound, the probability that such a cycle is \textit{not} cyclically irreducible is vanishing as $d$ tends to infinity. Now, when each letter is picked independently, their signs are are distributed as independent coin tosses. Hence the asymptotic law of $j-b(W)$. The asymptotic law of $h(W)$ follows by counting primitives. 
\end{proof}

\begin{proof}[Proof of Theorem \ref{prop:covariance}] By usual CLT for poisson variables and \eqref{eq:scaledfield} it is clear that there is finite-dimensional convergence of $\left( X_j(t,s),\; (t,s,j)\in (-\infty,0)\times(0,\infty)\times \NN \right)$ to a centered Gaussian field $\left( U_j(t,s),\; (t,s,j)\in (-\infty,0)\times(0,\infty)\times \NN \right)$. Let us first argue that the collection of Gaussian random surfaces has the stated covariance structure. 

It follows immediately from $(i),(ii)$ in Lemma \ref{lem:finitedcov} that, if $j\neq k$, then 
\[
\Cov\left( U_j(t_1, s_1), U_k(t_1, s_2) \right)= 4jk \lim_{d\rightarrow \infty} (2d-1)^{(j+k)/2} \cov\left( N_j(t_1, s_1), N_k(t_2, s_2)  \right)=0.   
\]
Extending the argument to linear combinations of $U_j$ and $U_k$ at different points in dimension and time proves that the entire fields $U_j$ and $U_k$ are independent. 

Now take $j=k$ and $t_1=t_2=t$. Assume as before, $s_1=0$ and $s_2=s\ge 0$. Recall from \eqref{adk} 
\[
\lim_{d\rightarrow \infty}(2d-1)^{-j}a(d,j)=1, \qquad j \in \NN.
\]
Then, from Lemma \ref{lem:finitedcov}, it follows that 
\[
\begin{split}
\Cov\left( U_j(t, 0), U_j(t, s) \right)=\frac{4j^2}{2j}\lim_{d\rightarrow\infty}   \E\left[ \frac{1}{h(w)} e^{-2b(Y_t) s} \right]=2j \lim_{d\rightarrow \infty} \E\left[ e^{-2b(Y_t)s}  \right].
\end{split}
\] 
The final equality is due to Lemma \ref{lem:randomsign}.

Now, fix $d$, and consider $b(Y_t)$. As in the derivation of \eqref{eq:ycalc}, we can write 
\[
b(Y_t)= b(W) + \abs{Y_t} - j, 
\]
where $W$ is a randomly chosen word of length $j$ and $\abs{Y_t}$ is a Yule process starting at $j$, independent of $W$. Thus
\[
 \E\left[ e^{-2b(Y_t)s}  \right]= e^{2sj} \E\left( e^{-2sb(W)} \right)\E\left( e^{-2s\abs{Y_t}} \right).
\]

Let $\xi(t)$ be a Yule process at time $-t$ starting with $\xi_0=1$. Then, we know that $\abs{Y_t}$ has the same law as the sum of $j$ many independent copies of $\xi(t)$. Therefore, $\E\left( e^{-2s\abs{Y_t}} \right)= \left[  \E\left( e^{-2s\xi(t)} \right)  \right]^j$.

Combining all the pieces, we get
\[
\begin{split}
\Cov\left( U_j(t, 0), U_j(t, s) \right)&= 2j \left[  \E\left( e^{-2s\xi(t)} \right)  \right]^j \lim_{d\rightarrow \infty}\E\left( e^{2s(j-b(W))} \right)\\
&=  2j  \left[  \E\left( e^{-2s\xi(t)} \right)  \right]^j \E\left( e^{2s\tau}  \right),
\end{split}
\]
where $\tau$ is the asymptotic law of $\abs{W} - b(W)$ as described in Lemma \ref{lem:randomsign}.
The general case follows along similar lines from Lemma \ref{lem:finitedcov} (iv). 
\bigskip

We now fix $k\in \NN$ and argue marginal tightness of the field $\left( X_k(t,s),\; t\le 0, s\ge 0  \right)$. We fix some rectangle $[-T_0,0]\times [0, S_0]$. The argument is similar to the case of $T_0=1$, $S_0=1$, which is what we assume for the rest of the proof. 

Consider the PPP $\chi$ from Definition \ref{defn:limitpoisson} and the independent halving chains starting from its atoms. Let $Q_w$ denote the law over the Skorokhod space $D[0,1]$ (for more details about the  Skorokhod space see \cite{Bil}) of the halving chain starting at word $w$. Then, one can think of the collection of atoms in $\chi$ and the halving chains as not separate entities but points of a PPP $\tchi$ on 
\[
(-\infty, \infty) \times [0, \infty) \times D[0, 1]
\]
with an intensity measure that is described below. Consider an atom $(z,v,x_w)$, where $X_w$ is a path of a halving chain starting at word $w$. It occurs at a rate that is the product of the rate of occurrence of $(z,v,w)$ in $\chi$ and $Q_w(dx_w)$.  

Now consider $N_k(t,s)$. One can write it as the sum of coordinatewise monotone processes in the following way. Let 
\[
\begin{split}
H_1(t,s)&= \sum_{(z,v,x_w)\in \tchi} 1\left\{ z\le s, z+v \ge 0, \abs{w}\ge k, \abs{x_w(-u)}=k, \text{for some}\; u\in [t,0]  \right\},\\
H_2(t,s)&= \sum_{(z,v,x_w)\in \tchi} 1\left\{ z\le s, z+v \ge 0, \abs{w}\ge k, \abs{x_w(-u)}=k-1, \text{for some}\; u\in [t,0]  \right\}.
\end{split}
\]
In other words, $H_1(t,s)$ is the cumulative count of all atoms of $\tchi$ that exist at some point during interval $[0,s]$ with word of size at least $k$ such that the halving chain from that word has size exactly $k$ at some dimension in $[t,0]$. Clearly $H_1(t,s)$ is increasing along $s$ and decreasing along $t$ (since $t<0$), and is distributed as Poisson for every fixed $(t,s)$. Similarly, $H_2$ counts those among $H_1$ that have jumped to a size below $k$. Thus, $H_1(t,s)-H_2(t,s)$ counts the number of atoms that are born or exists at some point in $[0,s]$ and are of size exactly $k$ at dimension $t$.

Similarly, let
\[
\begin{split}
H_3(t,s)&=\sum_{(z,v,x_w)\in \tchi} 1\left\{  0\le z+v \le s, \abs{w}\ge k, \abs{x_w(-u)}=k\; \text{for some}\; u\in [t,0]   \right\}\\
H_4(t,s)&= \sum_{(z,v,x_w)\in \tchi} 1\left\{  0\le z+v \le s, \abs{w}\ge k, \abs{x_w(-u)}=k-1\; \text{for some}\; u\in [t,0]   \right\}. 
\end{split}
\]
This is, $H_3(t,s)$ the cumulative count of all atoms that exist during $[0,s]$ and dies before time $s$, and is of size $k$ at some dimension larger than $t$. This is again increasing along $s$ and decreasing along $t$ and is marginally Poisson. A similar interpretation holds for $H_4$. Thus, $H_3(t,s) - H_4(t,s)$ counts atoms of size $k$ at dimension $t$ that have died during time $[0,s]$.  

Most importantly,
\eq\label{eq:diffmon}
\begin{split}
N_k(t,s)&=H_1(t,s) - H_2(t,s) - \left( H_3(t,s) - H_4(t,s)   \right),\\
\E\left(N_k(t,s)\right)&=\E\left(H_1(t,s)\right) - \E\left(H_2(t,s)\right) - \E\left( H_3(t,s)\right) + \E\left( H_4(t,s)   \right).
\end{split}
\en

We now claim that it is enough to show the tightness of each 
\[
S_i\stackrel{\Delta}{=}(2d-1)^{-1/2}\left(  H_i - \E H_i \right)
\]
in the $D$ topology. The reason is the Continuous Mapping Theorem. The limit of each $S_i$ will turn out to be a continuous Gaussian surface. Thus, under the product topology, the vector $(S_i, i\in[4])$ has an almost sure continuous limit. Also it is not hard to see that convergence to a continuous surface in the $D$ topology in \cite{neuhaus71} is the same as convergence in the uniform topology when restricted to continuous surfaces. This follows in the same way as for the classical Skorokhod topology. Thus, by Continuous Mapping Theorem, we can exchange the operations of limit and sums in $X_k=S_1-S_2-S_3+S_4$. This proves that $X_k$ is tight. 

\bigskip

The proof of tightness of every $S_i$ is similar. So we only explain in detail the case of $S_1$. The counting process $H_1$ itself can be decomposed in three parts. First, separately count of atoms that exist at time $0$, and those that were born after time zero. Second, among those born after time zero, count separately those which are born with words of size $k$ and those with size larger than $k$. 

That is, define
\[ 
\begin{split}
H_1^{(0)}(t)&= \sum_{(z,v,x_w)\in \tchi} 1\left\{ z\le 0, z+v \ge 0, \abs{w}\ge k, \abs{x_w(-u)}=k, \text{for some}\; u\in [t,0]  \right\},\\
H_1^{(1)}(t,s)&= \sum_{(z,v,x_w)\in \tchi} 1\left\{ 0< z \le s, z+v \ge 0, \abs{w}= k  \right\},\\
H_1^{(2)}(t,s)&= \sum_{(z,v,x_w)\in \tchi} 1\left\{ 0< z \le s, z+v \ge 0, \abs{w}> k, \abs{x_w(-u)}=k, \text{for some}\; u\in [t,0]  \right\}.
\end{split}
\]
The corresponding centered and scaled processes, $S_1^{(0)}$, $S_1^{(1)}$, and $S_1^{(2)}$, can be similarly defined. 

Notice that $H_1^{(0)}(t)$ does not depend on $s$. It can be extended to a surface by defining $H_1^{(0)}(t,s)\equiv H^{(0)}(t)$. It follows from \cite[Section 2]{neuhaus71} that the $D$ topology restricted to surfaces that are constant in the time axis is the usual Skorokhod topology for the process restricted to the dimension axis. It is not hard to see that $\left(S_1^{(0)}(t),\; -1\le t \le 0 \right)$ is a centered and scaled Poisson process and hence converges to Brownian motion in $D[0,1]$ and therefore has a continuous limit.  For details see \cite[page 1425]{JP14} and \cite{Johnson14}. Thus it has a continuous limit in $D$ for the entire surface. \\
Similarly $H_1^{(1)}(s)=H_1^{(1)}(t,s)$ does not depend on $t$ and a similar argument like above shows continuous limit for  $\left( H_1^{(1)}(s),\; 0\le s \le 1\right)$. 


Finally, for $S_1^{(2)}$ we relate this problem to the empirical process $X_n^F$ considered in \cite[Section 5]{neuhaus71}. The idea is the following, for every atom $(z,v,w)$ of $\tchi$ that is counted in $H_1^{(2)}$, consider the point $(t,z)$ on $[-1,0]\times [0,1]$, where $-t$ is the first `time' the chain $x_w$ hits a word of size $k$. The collection of points thus created is a PPP which can be described as the empirical process of i.i.d. many points in the following way. Condition on the number of points in the rectangle $[-1,0]\times [0,1]$. There are finitely many points which, by the PPP structure, are distributed independently and identically on the rectangle. A typical point $(T,Z)$ has independent coordinates: $Z$ is distributed uniformly over $[0,1]$, while $T$ has a continuous distribution of the first hitting time of size $k$ of a pure death chain, conditioned to be less than $<-t$.  
Thus, conditioned on the number of points, our counting process $S^{(2)}_1$ is basically the process $X_n^F(\cdot)$ in \cite[eqn. (4.2)]{neuhaus71} (see the remark preceding it). Since the number of points is Poisson with a mean going to infinity with $d$, a standard de-Poissonization argument extends the convergence argument in \cite{neuhaus71} to our case. This proves a continuous limit for $H^{(2)}_1$. 

Combining all three pieces and invoking a similar application of Continuous Mapping Theorem as before, we get a continuous Gaussian limit for $S_1$. Combining similar statements for $S_2, S_3$, and $S_4$, we get tightness for the surface $X_k$.

Since the limit of each $S_i$ is a continuous surface, the limit of $X_k$ must also be continuous. Since marginal tightness implies joint tightness in the product topology, this completes the proof of Theorem \ref{prop:covariance}.
\end{proof}

\begin{rmk}
Since the marginal distribution of the Yule process $\xi(t)$ is geometric with parameter $e^t$, each of the covariances in Theorem \ref{prop:covariance} can be computed explicitly. But again, this is unnecessary for our analysis. 
\end{rmk}

For the covariance in the stationary regime of Theorem \ref{prop:finalgauss}, we recall the following standard limit theorem for Yule processes. 

\begin{lemma}\label{lem:weakyule}
Let $\xi(\theta), \; \theta\ge 0$ be a Yule process such that $\xi(0)=1$. Then $e^{-\theta} \xi(\theta)$ converges in law to an exponential one random variable. Thus, for any $v\ge 0$, we have 
\eq\label{eq:expconv}
\lim_{\theta\rightarrow \infty} \E \exp\left( -v e^{-\theta} \xi(\theta) \right) = \frac{1}{1+v}. 
\en
\end{lemma}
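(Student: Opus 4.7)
The plan is to prove the weak convergence via Laplace transforms, which simultaneously establishes \eqref{eq:expconv}. The main input is the well-known explicit one-dimensional marginal of the Yule process: since $\xi(\theta)$ is a pure-birth process on $\NN$ with rate $k$ at state $k$ and $\xi(0)=1$, for every $\theta\ge 0$ the random variable $\xi(\theta)$ is geometrically distributed with parameter $e^{-\theta}$, i.e.\ $\P(\xi(\theta)=k)=e^{-\theta}(1-e^{-\theta})^{k-1}$ for $k\ge 1$. This is standard and can be verified either by writing $\xi(\theta)$ as a sum of independent exponential inter-arrival times of rates $1,2,3,\ldots$ or by directly checking that the geometric family solves the Kolmogorov forward equation. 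I will simply quote this fact.

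Given this, the Laplace transform in \eqref{eq:expconv} is an elementary geometric sum. Writing $p=e^{-\theta}$, one computes
\[
\E\exp\!\left(-v e^{-\theta}\xi(\theta)\right)=\sum_{k=1}^\infty e^{-vpk}\,p(1-p)^{k-1}=\frac{p\,e^{-vp}}{1-(1-p)e^{-vp}}.
\]
A Taylor expansion of the denominator in $p$ around $0$ gives $1-(1-p)e^{-vp}=(1+v)p+O(p^2)$ as $p\downarrow 0$, and the numerator is $p+O(p^2)$, so the ratio tends to $1/(1+v)$ as $\theta\to\infty$. Since $1/(1+v)$ is the Laplace transform of the unit exponential distribution, the continuity theorem for Laplace transforms on $[0,\infty)$ implies that $e^{-\theta}\xi(\theta)$ converges weakly to an exponential one random variable. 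This gives both the convergence in law and \eqref{eq:expconv} at the same time.

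There is no real obstacle here; the only slightly delicate point is verifying the geometric marginal, but this is a classical identity whose proof is immediate from the Markov property and a direct induction on $k$, or alternatively from the fact that the process $e^{-\theta}\xi(\theta)$ is a nonnegative mean-one martingale whose $L^1$-bounded limit can be identified via its Laplace transform as above. I would therefore keep the write-up to one short calculation followed by the continuity theorem.
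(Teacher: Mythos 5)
Your proof is correct and uses the same key fact as the paper (the geometric marginal of the Yule process, $\P(\xi(\theta)=k)=e^{-\theta}(1-e^{-\theta})^{k-1}$), but it reverses the direction of the argument. The paper cites the weak convergence $e^{-\theta}\xi(\theta)\Rightarrow\mathrm{Exp}(1)$ as classical, and then deduces \eqref{eq:expconv} as an immediate consequence of weak convergence applied to the bounded continuous function $x\mapsto e^{-vx}$. You instead compute the Laplace transform $\E\exp(-ve^{-\theta}\xi(\theta))=pe^{-vp}/(1-(1-p)e^{-vp})$ with $p=e^{-\theta}$ directly as a geometric series, show it tends to $1/(1+v)$, and then invoke the continuity theorem for Laplace transforms to conclude the weak convergence. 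Your route is more self-contained since it does not rely on an external citation for the exponential limit; the paper's route is shorter because it treats the weak convergence as known and only needs to note that \eqref{eq:expconv} is a trivial corollary of it. Both are correct; the computation you give is accurate (the denominator is $(1+v)p+O(p^2)$, the numerator $p+O(p^2)$, hence the ratio tends to $1/(1+v)$), and the continuity theorem applies since $1/(1+v)$ is indeed the Laplace transform of the unit exponential.
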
 
\begin{proof} The first claim is classical. The exact distribution at time $\theta$ is a geometric with mean $e^{\theta}$. See \cite[pg 122]{kf}. Convergence to an exponential one now follows from that. Equation \eqref{eq:expconv} follows from the stated weak convergence since $e^{-vx}$ is a continuous bounded function for $x\ge 0$.
\end{proof}

\begin{proof}[Proof of Theorem \ref{prop:finalgauss}] Let us show that the covariance converge to the stated limit. Pick $u_1, u_2, v_1, v_2\ge 0$, $j\in \NN$ and $T_0$ large enough. Define
\[
t_1=-T_0+ u_1,\; t_2=-T_0+u_2,\quad s_1=\frac{1}{2} v_1 e^{-T_0}, \; s_2= \frac{1}{2}v_2 e^{-T_0}. 
\]

Let $s=\abs{s_1-s_2}$. Then, obviously $\lim_{T_0\rightarrow \infty} \E\left(  e^{2s\tau} \right)=1$. 
Let $\theta=-\max(t_1, t_2)=T_0- u_1\vee u_2$. Then, by Lemma \ref{lem:weakyule}, 
\[
\begin{split}
\lim_{T_0 \rightarrow \infty} \E\left(  e^{-2s\xi(\theta)}  \right)&= \lim_{T_0 \rightarrow \infty} \E\left(  \exp\left( -\abs{v_1-v_2} e^{-T_0}\xi(T_0- u_1\vee u_2) \right)  \right)\\
&=\frac{1}{1+\abs{v_1-v_2}e^{-u_1\vee u_2}}.
\end{split}
\]
Combining all the pieces from Theorem \ref{prop:covariance}, gives us the correct covariance. 

We only need to argue tightness of each $U_j$ as $T_0\rightarrow \infty$. There are many ways to argue weak convergence of Gaussian surfaces. We choose to use \cite[Theorem 1]{DZ08}. The topologies allowed in \cite{DZ08} includes that of \cite{neuhaus71} (over continuous surfaces they are all uniform convergence). See the discussion at the beginning of Section 2 in \cite{DZ08} and the discussion following Theorem 3.1 in \cite{neuhaus71}. 

Take any sequence of $T_0$'s growing to infinity. In the notation of \cite[Theorem 1]{DZ08}, we have $d=2$, and take $\alpha=6$, $\beta=3$, and $a_{T_0}=1/T_0$. It follows from the covariance convergence that for any $(t_1, s_1)$ and $(t_2, s_2)$, the random variable $U_j(t_1, s_1) - U_j(t_2, s_2)$ is Gaussian with mean zero and a variance that is of the order 
\eq\label{eq:covorder}
O\left( \max\left( \abs{t_1-t_2}, \abs{s_1-s_2} \right)\right),
\en

where the $O$ can be taken not to depend on $T_0$. This shows 
\eq\label{kccondition}
\E {|U_j(t_1, s_1) - U_j(t_2, s_2)|}^{6}= O \left( {\abs{t_1-t_2}}^3+ {\abs{s_1-s_2}}^3 \right)
\en

and hence
condition (1) in \cite[Theorem 1]{DZ08}. Condition (2) follows since the Gaussian field $U_j$ is uniformly H\"older continuous as $T_0\rightarrow \infty$. This is a consequence of the Kolmogorov-\u{C}entsov Theorem and the uniform bound \eqref{kccondition}. For details, see \cite[page 53]{ksbrownian}. The uniform moment bound for the Gaussian random variable $U_j(0,0)$ follows from the fact that its variance has a limit as $T_0\rightarrow \infty$. These verify all the conditions for \cite[Theorem 1]{DZ08}; in particular, we obtain that the limiting Gaussian field $G_j$ is continuous almost surely.  This completes the proof.  
\end{proof}

\section{Weak convergence of cycle counts}

\subsection{Heuristic arguments on the limit}\label{hal}

   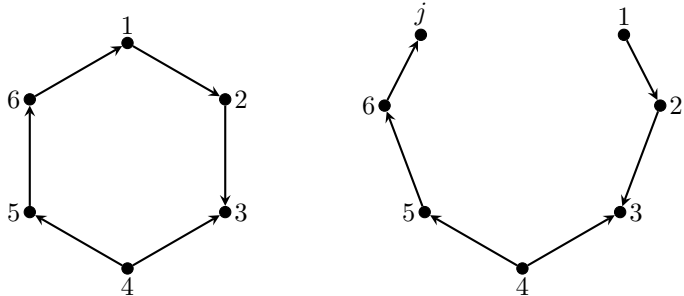
\begin{figure}[t]
      \begin{center}
        \begin{tikzpicture}[scale=1.5,vert/.style={circle,fill,inner sep=0,
              minimum size=0.15cm,draw},>=stealth]
            \node[vert] (s0) at (270:1) {};
            \node[vert] (s1) at (330:1) {};
            \node[vert] (s2) at (30:1) {};
            \node[vert] (s3) at (90:1) {};
            \node[vert] (s4) at (150:1) {};
            \node[vert] (s5) at (210:1) {};
            \draw[thick,->] (s0) to node[auto,swap] {} (s1);
            \draw[thick,<-] (s1) to node[auto,swap] {} (s2);
            \draw[thick,<-] (s2) to node[auto,swap] {} (s3);
            \draw[thick,<-] (s3) to node[auto,swap] {} (s4);
            \draw[thick,<-] (s4) to node[auto,swap] {} (s5);
            \draw[thick,<-] (s5) to node[auto,swap] {} (s0);
            \draw (s0) node[anchor=north] {$4$};
            \draw (s1) node[anchor=west] {$3$};
            \draw (s2) node[anchor=west] {$2$};
            \draw (s3) node[anchor=south] {$1$};
            \draw (s4) node[anchor=east] {$6$};
            \draw (s5) node[anchor=east] {$5$};
            
            \pgftransformxshift{3.5cm}
            \node[vert] (t0) at (270:1) {};
            \node[vert] (t1) at (330:1) {};
            \node[vert] (t2) at (20:1.3) {};
            \node[vert] (t3) at (50:1.4) {};
            \node[vert] (t4) at (160:1.3) {};
            \node[vert] (t5) at (210:1) {};
            \node[vert] (t6) at (130:1.4) {};
            \draw[thick,->] (t0) to node[auto,swap] {} (t1);
            \draw[thick,<-] (t1) to node[auto,swap] {} (t2);
            \draw[thick,<-] (t2) to node[auto,swap] {} (t3);
            \draw[thick,<-] (t6) to node[auto,swap] {} (t4);
            \draw[thick,<-] (t4) to node[auto,swap] {} (t5);
            \draw[thick,<-] (t5) to node[auto,swap] {} (t0);
            \draw (t0) node[anchor=north] {$4$};
            \draw (t1) node[anchor=west] {$3$};
            \draw (t2) node[anchor=west] {$2$};
            \draw (t3) node[anchor=south] {$1$};
            \draw (t4) node[anchor=east] {$6$};
            \draw (t5) node[anchor=east] {$5$};
            \draw (t6) node[anchor=south] {$j$};
        \end{tikzpicture}
      \end{center}
    \caption{A cycle that vanishes due to the action of the transposition $(1,j)$, $j > 6$, that gets multiplied on the left.}
    \label{fig:timecycle}
  \end{figure}

In this subsection we give heuristic arguments to justify the form of the limiting field. 
Suppose $M_T=n$, i.e., the graph $G(T,0)$ has $n$ vertices, where $n$ is very large. Consider a cycle with word $w$. For our purpose it suffices to consider that the vertex labels and edge directions are given while the edge labels are omitted. See Figure \ref{fig:timecycle} which depicts one such. For any word $w$, recall that $C_w(T,s)$ is the count of the number of cycles with word $w$ at dimension $T$ and time $s$.  

Consider the possible ways this cycle can get modified under the action of a transposition. Consider some $j > 6$. The transposition $(1,j)$ opens the cycle up (as in Figure \ref{fig:timecycle}). The same happens when $1$ is replaced by any other vertex of the cycle such that the direction of the edges is the same on both sides of it. The number of such vertices is $b(w)$. Other possibilities are multiplying with $(3,j)$ which replaces $3$ by $j$ but does not effect the count $C_w(T,\cdot)$. Nothing changes at all when multiplied with $(4,j)$. There are other possibilities. For example, we could choose a transposition $(i,j)$ where both $i,j \in [6]$. However, these events are of negligible probability for large $T$. Hence, the approximate rate at which this cycle ceases to exist is $2b(w)$ (recall definition from Definition \ref{defn:propertiesw}), at which point the count $C_w(T,\cdot)$ decreases by one. 

The other possibility is the appearance of a new cycle of word $w$. The easiest way to calculate the rate is to appeal to stationarity. From \cite[Thm 14,Cor 15]{JP14} we know that the law of $C_w(T,0)$ is approximately Poisson with mean $1/h(w)$. By exchangeability of vertex labels, the same law is true for any $C_w(T,s)$. Because the underlying graph is large and the number of cycles is roughly of constant order it is not too hard to imagine that the rate at which new cycles form should roughly stay the same along time.  Thus by the above discussion it follows that the law of $C_w(T, \cdot)$ for large $T$ is a \bad Markov chain. Since cycles disappear at rate $2b(w)$, they constant birth rate must be $2b(w)/h(w)$ in order to keep the given Poisson distribution invariant.

If cycles do not share vertices it seems reasonable that individual cycles get born and die independently of one another. Hence, the joint law of the process $\left( C_w(T,s),\; w\in \Ww',\; s\ge 0   \right)$ is approximately given by independent birth-and-death chains where the individual laws are described above.

Another way of expressing this \bad structure is to think of cycles appearing as a Poisson point process on the time axis according to a rate that depends on the word. With every atom that represents a cycle being born, we attach the length of time the cycle survives. These lifetimes are roughly independent exponentials, and this gives us the limiting PPP $\chi$ in Definition \ref{defn:limitpoisson}.

We now track these cycles backward in dimension. Suppose a cycle with word $w'$ exists at dimension $T$ and time $0$. At time $0$, looked backwards in dimension, the cycle shrinks in length or disappears entirely. The resulting sequence of words follow the halving chain. See the heuristics in \cite[Section 3.1]{JP14} and also \cite[Lemma 13]{JP14}.
Suppose now that this cycle exists simultaneously at two time points (say) $0$ and $s$. The joint law of the CRP backwards at these two time points depends on the order in which we remove the vertices. If we follow our convention outlined in \eqref{eq:orderremoval} the following convenient feature emerges. Consider the first time it gets halved, say $t_0 < T$, at which point its word becomes $w$. Then, some letter, say $\pi$, of $w$ doubles to give us $w'$ and this is the only difference between the two words. Therefore, on the cycle with word $w'$, we have a sequence of vertices $i \stackrel{\pi}{\rightarrow} j \stackrel{\pi}{\rightarrow} k$ as three successive vertices with labeled directed edges. Now, as we move time to $s$, the vertices of this cycle are exactly $\sigma_s$ applied to the vertices of $w'$. Now reduce its dimension at time $s$ and track the change in the cycle. Since $j$ is the first vertex to be deleted from the cycle at time $0$, by our convention, $\sigma_s(j)$ is the first vertex to be deleted at time $s$. Thus, the first change to $\sigma_s \cdot w'$ also happens exactly at dimension $t_0$ when we erase vertex $\sigma_s(j)$ and halve the double letters $\pi\cdot \pi$.

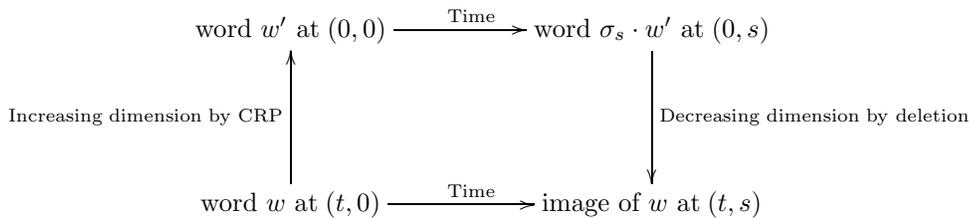
\begin{figure}[t]
\centerline{
\xymatrix@=5em{
\text{word $w'$ at}\; (0,0)  \ar[r]^{ \text{Time} } & \text{word $\sigma_s\cdot w'$ at}\; (0,s) \ar[d]^{\text{Decreasing dimension by deletion}} \\
\text{word $w$ at}\; (t,0) \ar[r]^{\text{Time}} \ar[u]^{\text{Increasing dimension by CRP}} & \text{image of $w$ at}\; (t,s)
}
}
\caption{Joint dynamics of a cycle at two times.}
\label{fig:commintw}
\end{figure}

Hence, by looking back at the most recent change at each time, inductively, allows us to describe the joint law of the process of cycles backward in the dimension parameter. Namely, at dimension $T$, consider any cycle that is born. During the entire time interval of its existence, its path, looked backwards in dimension, is identical up to relabeling of vertices (see Figure \ref{fig:commintw}). At any time point during its existence, this path is a typical path of the halving chain. This produces the limiting cycle counting field described in Theorem \ref{prop:weakconv}.  

The rest of the section is devoted to the proof of Theorem \ref{prop:weakconv}.

\subsection{Proof of Theorem \ref{prop:weakconv}} 

Notice that the time parameter for $\chi$ ranges over $(-\infty, \infty)$. This is done for a neater description. For the proof below we will work with a restricted version of $\chi$ with time varying over $[0, \infty)$. 

This is achieved by collecting all atoms of $\chi$ that \textit{exists} at time $0$ and marking them as points born at time zero. More formally, if $(z,v,w)\in \chi$ such that $z< 0$ and $z+v >0$, then, we replace this atom by another $(0, v', w)$, where $v'=z+v$. This produces an atomic intensity at time $0$. It follows from Lemma \ref{lem:dimzero} and Proposition \ref{prop:dimmarginal} that for a word $w$, the number of atoms of word $w$ at time $0$ is Poisson with mean $1/h(w)$. By the memoryless property, the remaining lifetimes of atoms remain exponentially distributed. By an abuse of notation, we will continue to call this PPP restricted $\chi$ or just $\chi$, in case there is no scope of confusion. 

\bigskip

Fix $T>0$. For every cycle appearing the process $G(T, \cdot)$, consider the triplet $(z,v,w)$ where (i) $z$ is the time when it first appears ($z=0$ if the cycle exists at time $0$), (ii) $v$ is the difference between the time it disappears and $z$ (the lifetime), and (iii) $w$ is the word of the cycle. Construct a point process $\chi(T)$ on $[0,\infty)\times[0,\infty) \times\Ww'$ as a random measure that counts these atoms $(z,v,w)$. Hence $\chi(T)$ is similar to $\chi$ but for the finite graph process at dimension $T$. For every $L\in \NN$, let $\chi_L(T)$ be the restriction of $\chi(T)$ to atoms whose words are of length at most $L$. We start by showing that every $\chi_L(T)$ converges to the claimed weak limit $\chi_L$, which is $\chi$ restricted to time $[0,\infty)$ and words of length at most $L$. 

Now, for every $T>0$, consider independent halving chains with initial condition given by atoms of $\chi_L(T)$ as described in Definition \ref{defn:limitpoisson}. There are only finitely many such chains and this operation is well-defined. One can define a cycle counting field $N^{(T,L)}_w(t,s)$ induced by these Markov chains exactly as in \eqref{eq:definecount}.

Fix any positive $T_0, S_0$. Consider both the count of cycles 
\[
C_w(T+t,s)\; \text{and}\; N^{(T,L)}_w(t,s), \quad (t,s) \in [-T_0,0]\times [0, S_0].
\]
By trivial modifications at their finitely many jump points, they can be both turned into \textit{primitive functions} in the sense of \cite[p. 1288]{neuhaus71}, and, therefore, elements in 
\[
D_2\stackrel{\Delta}{=}D^{\Ww'}\left([-T_0,0] \times [0, S_0] \right). 
\]

We have the following proposition. The topology of convergence of point processes is the usual one for random Radon measures. See \cite[Chapter 3]{extreme07}.

\begin{prop}\label{prop:mainconvg}
For any $L>0$, the point process $\chi_L(T)$ converges weakly to the restricted PPP $\chi_L$ as $T$ goes to infinity. Moreover, for any $K\in \NN$ and any $\epsilon >0$ one can find an $L \in \NN$ such that 
\eq\label{eq:LKapprox}
\limsup_{T\rightarrow \infty} \P\left(  \sup_{(t,s) \in [-T_0,0]\times [0, S_0],\; \abs{w} \le K} \abs{C_w(T+t,s) - N^{(T,L)}_w(t,s)} > 0 \right) < \epsilon.
\en 
In particular, for large enough $T$, the total variation distance between 
\eq\label{eq:LKTV}
C_w(T+t,s)\; \text{and}\; N^{(T,L)}_w(t,s), \quad (t,s) \in [-T_0,0]\times [0, S_0], \; w\in \Ww'_K,
\en
is less than $\epsilon$. 
\end{prop}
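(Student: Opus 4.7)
The plan is to handle the point process convergence and the cycle-count coupling as largely independent tasks, leveraging the single-time results of \cite{JP14} for the $s=0$ slice and the heuristic Markov description from Section \ref{hal} for the time dynamics.

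\medskip

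\textbf{Step 1 (initial slice at $s=0$).} First I would show that the restriction of $\chi_L(T)$ to time $0$, viewed as a random vector of cycle counts $\bigl(C_w(T,0)\bigr)_{w\in\Ww'_L}$, converges jointly in law to a product of independent Poisson$(1/h(w))$ variables. This is exactly the content of \cite[Thm.\ 14]{JP14}, applied only to words of length at most $L$. It identifies the correct intensity of the limiting PPP $\chi_L$ along the face $\{0\}\times[0,\infty)\times \Ww'_L$ (an atomic component created above by the $z<0$ collapsing procedure).

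\medskip

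\textbf{Step 2 (dynamics after $s=0$).} Conditionally on the state at $s=0$, I would analyze the transposition Markov chain $G(T,\cdot)$ projected onto the cycle-count vector $\bigl(C_w(T,s)\bigr)_{w\in\Ww'_L}$. The heuristic computation of Section \ref{hal} gives the following rates for $n=M_T$ large. A given cycle with word $w$ is destroyed by a transposition $(I,J)$ precisely when $I$ lies on the cycle at a position where the two incident edges have matching orientations and $J$ lies off the cycle; modulo an $O(1/n)$ event where $J$ also lies on the cycle or on another short cycle, this happens at total rate $2b(w)$. Analogously, by computing the probability that a single transposition creates a new $k$-cycle with a prescribed word, I would show that the aggregate birth rate of word $w$ converges to $2b(w)/h(w)$, and that any pairing between births and existing cycles (e.g.\ two new cycles simultaneously, or a birth of one word tied to the destruction of another) has probability $O(1/n)$ per unit time. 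On a fixed horizon $[0,S_0]$ with only $O_p(1)$ cycles of length $\le L$ present, these joint-event probabilities are negligible.

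This reduces the evolution to: independent \bad chains indexed by $w\in\Ww'_L$, with the rates of Lemma \ref{lem:dimzero}. Standard tightness plus finite-dimensional convergence (use Poisson thinning and convergence of Laplace functionals on rectangles $[z_1,z_2]\times[v_1,v_2]\times\{w\}$) then gives $\chi_L(T)\Rightarrow \chi_L$ as random measures. Birth times converge to a PPP of rate $2b(w)/h(w)$ on $(0,\infty)$; remaining lifetimes are asymptotically $\mathrm{Exp}(2b(w))$ by the memoryless property and the rate computation.

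\medskip

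\textbf{Step 3 (cycle-count coupling, Eq.\ \eqref{eq:LKapprox}).} For the second half I would mimic Lemma \ref{lem:existence}. Recall that by the coupling in \eqref{eq:orderremoval}, the graph $G(T+t,s)$ is obtained from $G(T,s)$ by running the CRP backward along the relabeled vertex sequence $\sigma_s(n),\sigma_s(n-1),\ldots$. Deletion of a vertex in a cycle either halves a pair of double letters or destroys the cycle, so every $k$-cycle with $k\le K$ at $(T+t,s)$ is the image of a unique cycle at $(T,s)$ under a halving-chain trajectory of length $|w|-k$. Thus
\[
C_w(T+t,s)=\sum_{(z,v,w')\in\chi(T)\colon z\le s\le z+v} \mathbf{1}\bigl\{X_{z,v,w'}(-t)=w\bigr\},
\]
with $X_{z,v,w'}$ the induced backward-CRP-halving chain. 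It remains to discard atoms of word length $|w'|>L$. A word-counting argument identical to the estimate culminating in \eqref{sum1}, using the explicit birth rate $2b(w')/h(w')\le 2|w'|$ and the form \eqref{candidate}, yields
\[
\P\bigl(\exists (z,v,w')\in\chi_{L^c}(T),\ z\in[0,S_0],\ X_{z,v,w'}(-u)\in\Ww'_K\text{ for some }u\in[0,T_0]\bigr)
\le\sum_{l>L}(2d)^K(2S_0 l+1)l^K(1-e^{-T_0})^{l-K},
\]
which is summable and can be driven below $\epsilon$ by choosing $L$ large. The same bound applies to the finite-$T$ system because the finite-$T$ birth rates are at most $2|w'|$ up to a $(1+o(1))$ factor, as established in Step 2. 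On the complementary event, $C_w(T+t,s)=N^{(T,L)}_w(t,s)$ for every $w\in\Ww'_K$ and every $(t,s)$ in the rectangle, giving \eqref{eq:LKapprox}. The total-variation statement follows because both sides are integer-valued and \eqref{eq:LKapprox} bounds the probability of a discrepancy.

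\medskip

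\textbf{Main obstacle.} The delicate step is Step 2: turning the heuristic ``cycles are approximately independent birth--death chains'' into a rigorous rate statement uniform over $[0,S_0]$. The difficulty is that cycles can share vertices and that the population of short cycles, while $O_p(1)$ in number, can fluctuate. The route I would take is to enlarge the state to include, jointly, the counts of cycles of all words in $\Ww'_L$ together with indicators of vertex sharing events, showing that the latter contribute only $O(1/n)$ jump rates to the generator. Proving this generator convergence (and then invoking a standard semigroup/martingale problem convergence result on the Skorokhod space $D^{\Ww'_L}[0,S_0]$) is what carries the argument over from heuristics to a proof of $\chi_L(T)\Rightarrow\chi_L$.
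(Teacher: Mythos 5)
Your proposal follows the same two-part strategy as the paper: (a) establish weak convergence of the front point process $\chi_L(T)\Rightarrow\chi_L$ via rate computations, and (b) combine this with the backward-CRP halving-chain coupling and a word-counting estimate, exactly parallel to Lemma \ref{lem:existence}, to get \eqref{eq:LKapprox}. Step 3 is essentially the paper's argument in Section \ref{sec:halvingconv}, and your display bound is the same as \eqref{eq:thirdunion}. The technical implementation in Step 2, however, differs: you propose a martingale-problem/generator-convergence route, whereas the paper expresses the counting processes as time-changed Poisson processes via the Watanabe--Meyer theorem (Lemma \ref{lem:formalpp}) and couples lifetimes directly using the explicit error bound \eqref{eq:deathest}. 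Both routes should work; the paper's has the advantage that the time-change $\Gamma^+_w$ is an explicit, almost-deterministic object (within $O(\sqrt{\log n}/n)$ of the limiting rate up to the stopping time $\stime$), which makes the convergence of Laplace functionals transparent without setting up a full generator-convergence framework.

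There is one genuine gap. In Step 2 you argue that ``any pairing between births and existing cycles ... has probability $O(1/n)$ per unit time'' while only tracking cycles of length at most $L$. But the dominant mechanism for two size-$\le K$ cycles to be born \emph{simultaneously} is a transposition $(x,y)$ with both $x,y$ lying on a single existing cycle of size up to $2K$: the transposition splits that long cycle into two short ones. To show this happens at rate $O(\sqrt{\log n}/n)$ you must therefore control the population of cycles of length up to $2K$, not just $K$. This is precisely why the paper's stopping time $\tau_2$ in \eqref{eq:defnst} is defined over $\Ww'_{2K}$ rather than $\Ww'_K$ (see Remark \ref{brate1}). Your ``Main obstacle'' paragraph flags the general difficulty (simultaneous events, shared vertices) but the proposed fix --- enlarging the state with vertex-sharing indicators and showing they contribute $O(1/n)$ --- does not identify that the required a priori bound is on strictly longer cycles than the ones being counted. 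Without that observation the generator-convergence argument will not close, because you cannot bound the bad-transposition rate using only the state variables you have declared. Relatedly, Step 3 implicitly requires that cycles at dimensions $T$ and $T+t$ be in bijection through the backward CRP; this holds only on the event $\gevent$ that no two short cycles ever share a vertex during $[0,S_0]$, which you should state explicitly and control as the paper does, rather than leaving it folded into the claim that every short cycle at $(T+t,s)$ ``is the image of a unique cycle at $(T,s)$.''
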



\subsection{PPP convergence at the front} We start by proving weak convergence of $\chi_K(T)$ to the limiting PPP $\chi_K$ at the front for every $K\in \NN$.

\begin{prop}\label{lem:jumpone} 
Let $\P$ refer to the law of the transposition Markov chain acting on $d$ uniform random permutations. Then, under $\P$, for any fixed $K>0$, the random measure $\chi_K(T)$, converges in law to the restricted PPP $\chi_K$.   
\end{prop}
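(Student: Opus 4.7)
The plan is to verify PPP convergence by (i) computing the asymptotic intensity of $\chi_K(T)$, (ii) verifying asymptotic independence across disjoint Borel sets, and (iii) invoking the standard criterion for convergence of simple point processes to a PPP \cite{extreme07}. Fix $K\in\NN$ and work with the finitely many equivalence classes of words $w\in\Ww'_K$. I would decompose an atom of $\chi_K(T)$ according to whether its birth time is $z=0$ (the cycle exists at $s=0$) or $z>0$ (born during the time interval). For $z=0$, the count of atoms at $w$ equals $C_w(T,0)$, which by \cite[Corollary 15]{JP14} converges jointly across $w$ to independent $\mathrm{Poisson}(1/h(w))$ random variables; this matches the time-zero atomic component of $\chi_K$ described just before the statement of this proposition. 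The residual lifetime of each such cycle is the waiting time until the transposition chain first destroys it.

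For the death rate, formalizing the heuristic in Section~\ref{hal}, I would enumerate the transpositions $\tau_{IJ}$ that destroy a given $k$-cycle of word $w$. There are exactly $2b(w)(n-k)$ choices of $(I,J)$ such that exactly one of $I,J$ is a same-sign-transition vertex on the cycle and the other is external; each fires at rate $2/(n-1)$, giving a total rate of $2b(w)+o(1)$. Transpositions with both endpoints on the cycle, or on another short cycle, contribute only $O(1/n)$ by a union bound and are negligible. Independence across the $O_K(1)$ cycles of bounded length follows because such cycles are vertex-disjoint with probability $1-o(1)$, and the rates computed for disjoint cycles are decoupled.

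For $z>0$, the analogous calculation is the birth rate: the rate at which a fresh transposition applied to $G(T,s)$ creates a new cycle of word $w$. Using exchangeability of vertex labels and the combinatorial enumeration underlying the proof of \cite[Theorem~14]{JP14} (with the observation that the distribution of $G(T,s)$ is the same as that of $G(T,0)$ by stationarity of the uniform measure on permutations under the transposition chain), the expected number of ``good'' transpositions, multiplied by $2/(n-1)$, converges to $2b(w)/h(w)$. Conditionally on a birth, the subsequent lifetime is Exp$(2b(w))$ by the same destruction-rate argument, independent of the prior history by the strong Markov property. Births of different words, and births in disjoint time intervals, are asymptotically independent by the same vertex-disjointness argument plus the Markov property.

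To upgrade these marginal computations to joint PPP convergence, I would verify, for any finite collection of disjoint bounded rectangles $A_1,\ldots,A_m\subset[0,\infty)\times[0,\infty)\times\Ww'_K$, that $(\chi_K(T)(A_i))_{i=1}^m$ converges to independent Poissons with the correct means given by the intensity of $\chi_K$. Together with the fact that $\chi_K(T)$ is a simple point process and the limit has no fixed atoms (except at $z=0$, which is handled by the explicit limit of $C_w(T,0)$), this gives weak convergence of point processes by the standard criterion of \cite[Ch.~3]{extreme07}. The main obstacle is the rigorous verification that the exceptional configurations -- short cycles sharing vertices, transpositions hitting two vertices inside or touching a short cycle -- contribute only $o(1)$ to both the birth and death rates; this is controlled by a direct union bound using that cycles of length at most $K$ occupy at most $O_K(1)$ of the $n\to\infty$ vertices, and there are only $O(S_0)$ relevant transpositions on any finite time horizon.
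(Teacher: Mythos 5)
Your high-level strategy---compute the death rate ($\approx 2b(w)$) and birth rate ($\approx 2b(w)/h(w)$), control collisions between short cycles, and pass to the limiting PPP---is the same as the paper's, but the final assembly step differs. Where you invoke the standard criterion for weak convergence of simple point processes (Poisson counts on disjoint rectangles, \cite[Ch.~3]{extreme07}), the paper instead constructs an explicit coupling: it extends the counting process past a stopping time $\stime$, applies the Watanabe--Meyer theorem to realize the (extended) birth counts as a time-changed Poisson process (Lemma~\ref{lem:formalpp}), and then shows $\stime\to\infty$ in probability (Lemma~\ref{lem:stimeinf}). Both routes are legitimate; the time-change route has the advantage that it gives a coupling, not just convergence in law, which the paper exploits elsewhere.

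A few points in your sketch would need to be repaired before it is airtight. First, your count of destructive transpositions gives a total death rate of $2b(w)(n-k)\cdot 2/(n-1)\approx 4b(w)$, a factor-of-two slip; the count of \emph{unordered} destructive pairs is $b(w)(n-k+O(1))$, each at rate $2/(n-1)$, giving $2b(w)+o(1)$. Second, ``there are only $O(S_0)$ relevant transpositions on any finite time horizon'' is not right: there are $\mathrm{Poisson}(nS_0)$ transpositions in total, and the point of the paper's stopping times $\tau_1,\tau_2,\tau_3$ in \eqref{eq:defnst} and Lemma~\ref{lem:stimeinf} is precisely to show that among these $\Theta(n)$ events, with high probability none of the $O(\log n/n)$-rate bad ones occur before a fixed horizon. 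Third, and most importantly for your chosen PPP criterion, you must verify \emph{simplicity} of the point process, i.e.\ that two short cycles are never born by a single transposition. The paper's $\tau_2$ controls cycles of length up to $2K$ (not merely $K$), because a transposition that creates two $\le K$-cycles simultaneously must be undoing a cycle of length up to $2K$; see Remark~\ref{brate1}. Your sketch controls only length-$\le K$ cycles, so this obstruction to simplicity is not addressed. Finally, your asymptotic birth-rate claim leans on ``the combinatorial enumeration underlying \cite[Thm.~14]{JP14}''; in the paper this is made explicit as the pre-cycle count of Lemma~\ref{lem:precyclecount} together with the rate computation \eqref{eq:notrans}--\eqref{eq:rateborn}, and writing that out is genuinely the bulk of the work.
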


To prove the above proposition we start with the following definitions inspired from \cite{LP} and earlier work on word maps. 

\begin{defn}\label{defn:trailsetal} Let $w$ be a word (not an equivalent class). A trail with word $w=w_1w_2\ldots w_k$ is an edge-labeled directed graph of the form   
\begin{align}\label{fig:trail}
        \xymatrix{
          s_0 \ar[r]^{w_1} & s_1 \ar[r]^{w_2}&s_2\ar[r]^{w_3}&
          \cdots\ar[r]^{w_k}&s_k
        }
\end{align}
with $s_i\in\{1,\ldots,n\}$ and does not have repeated vertices except perhaps at the two ends. The trail is said to be closed if $s_k=s_0$. Clearly, closed trails are cycles with word $w$. A pre-cycle with word $u$ is a trail that is \textit{not} closed and that can be obtained by multiplying (on left) a closed trail with word $u$ with some transposition $\sigma$. Multiplication here means the natural action of the transposition on the edges of the trail. Since multiplication by a transposition is an involution, $\sigma$ multiplied to the pre-cycle gives us a cycle with word $u$. This is only possible if the signs of $u_1$ and $u_k$ are the same, $\sigma=[s_0, s_{k}]$, where $\abs{u}=k$. Let $S_u$ denote the set of pre-cycles with word $u$. Two pre-cycles with words $u_1$ and $u_2$ are called equivalent if $u_1^{-1}=u_2$ and the sequence of vertices in the first pre-cycle is the reverse of that in the second. 
\end{defn}

Consider some $T>0$ and let $M_T$ be the number of vertices of $G(T,0)$. Condition on $M_T=n$. For the proofs in this subsection, we will send $n$ to infinity instead of $T$. This is equivalent by the well-known fact. Assume that at dimension zero, the permutations have exactly one label, $1$. Then $M_T$ is the state of a Yule process at time $T$ starting with one individual. 
We know from Lemma \ref{lem:weakyule} that the weak limit $\lim_{T\rightarrow \infty} e^{-T}M_T$ is exponential with mean one. Thus, sending $T$ to infinity is equivalent to sending $M_T$ to infinity.

We now define an appropriate filtration. Let $\mcal{G}_0$ denote the $\sigma$-algebra generated by the $d$ many permutations $\{ \pi_1^{(n)}, \ldots, \pi_d^{(n)} \}$ at time $0$. For any positive $s$, let $\mcal{G}_s$ be the $\sigma$-algebra generated by the path of the random transposition Markov chain applied to these $d$ permutations during time $[0,s]$. Let $\pi_i(s)$ denote the state of the $i$th permutation at time $s$. Then the vector-valued process $\left(  \pi_1(s), \ldots, \pi_d(s)  \right)$ is Markov with respect to this filtration.

Since $T$ will be kept implicit in the analysis in this subsection, we will shorten $C_w(T,s)$ to $C_w(s)$. Fix an arbitrary $K\in \NN$. We call a cycle (or a word) \textit{short} if its length is at most $K$.
Define the following $\left\{ \mcal{G}_s \right\}$ stopping times:
\eq\label{eq:defnst}
\begin{split}
\tau_1&= \inf \left\{ s\ge 0: \text{a newborn short cycle shares a vertex with an existing short cycle} \right\},\\
\tau_2&= \inf \left\{ s\ge 0:\; \sum_{w\in \Ww'_{2K}} C_w(s) > \sqrt{\log n}     \right\},\\
\tau_3&= \inf\left\{ s\ge 0: \; \sum_{w\in \Ww'_K }\abs{C_w(s)-C_w(s-)} > 1      \right\}.
\end{split}
\en
Note that $\tau_2$ bounds cycles of length up to $2K$ and $\tau_3$ rules out the possibility of the short cycle count jumping by more than one at any given moment in time.  

Let $\stime=\min\left\{  \tau_1, \tau_2, \tau_3 \right\}$. Assume $\stime > 0$. 

\begin{defn}\label{propgood}
Let $\theta(s)$ denote the proportion of vertices counted in short cycles at time $s$. Thus, $\theta(0)\le 2K\sqrt{\log n}/n$. 
\end{defn}

\medskip

\nin\tbf{Step 1.} Fix $s > 0$. Suppose there are $N$ short cycles in the graph at time $s$. Let $\xi_i$ be the stopping time when the $i$th short cycle vanishes.  For $h\ge 0$, let $\mcal{I}(s+h)$ be the vector of length $N$ whose $i$th coordinate, $\mcal{I}_i(s+h)$, is the indicator of the event $\{\xi_i \le s+h \}$. Consider this now as a process in $h$ that starts at the vector of all zeroes, and then, with progressing time, individual coordinates jump to one.

\begin{lemma}\label{lem:deathrate} There exist a stopping time $\tau^*$ and a family of progressively measurable nonnegative processes $\left( \lambda_w^-(\cdot),\; w\in \Ww'  \right)$ such that the following happens. 
\begin{enumerate}[(i)]
\item Until $\tau^*\wedge \stime$, the process $\mcal{I}$ is a counting process, i.e., every coordinate increases exactly by one, and no two coordinates jump together. 
\item Suppose $w$ is the word of the $i$th cycle. Then
\eq\label{eq:dyingcount}
\mcal{I}_i(s+h) - \int_0^{h\wedge \xi_i} \lambda_w^-(s+v)dv, \quad h\ge 0,
\en
is a local martingale. 
\item The death rates $\lambda_w^-(\cdot)$ satisfies the uniform estimate:
\eq\label{eq:deathest}
\lambda_w^-(s) \in 2b(w)\left[1- \frac{\sqrt{\log n}}{n}, 1\right], \qquad \text{for all $s\ge 0$}.
\en
\item Let $E^*$ denote an independent exponential random variable with mean $n/\log n$. Then, for any $t >0$, $P\left(  \tau^* \wedge \stime > t  \right) \ge P\left(  E^* \wedge \stime > t \right)$.
That is, $\tau^* \wedge \stime$ stochastically dominates $E^* \wedge \stime$. 
\end{enumerate}
\end{lemma}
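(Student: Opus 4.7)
\medskip
\noindent\textbf{Proof proposal.}
The plan is to enumerate, for each short cycle present at time $s$, the transpositions whose firing kills it, to compute the total firing rate of this class, and to package the result using the standard compensator formula for continuous-time Markov chains.

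\textit{Step 1 (classifying destructive transpositions).} Fix a cycle $\gamma_i$ at time $s$ with word $w$, vertex set $V_i\subset[n]$ and edge labels as in Definition \ref{defn:propertiesw}. For each vertex $v\in V_i$, consider the two letters of $w$ incident to $v$. If these two letters have equal sign, then multiplication on the left by $\tau_{v,j}$ with $j\notin V_i$ changes one of the two edges incident to $v$ (replacing $v$ by $j$ on exactly one side of the cycle) and so opens the closed walk, destroying $\gamma_i$ as a cycle; if the two signs disagree, the action replaces $v$ by $j$ on both incident edges simultaneously, which merely relabels the cycle. By Definition \ref{defn:propertiesw} the number of same-sign vertices equals $b(w)$.

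\textit{Step 2 (rate computation and bound \eqref{eq:deathest}).} Each transposition $\tau_{v,j}$ fires at rate $2/(n-1)$. For each of the $b(w)$ good vertices $v\in V_i$, say a $j\in[n]\setminus\{v\}$ is \emph{good} if $j$ does not belong to the vertex set of any short cycle (counted by $\sum_{w\in\Ww'_{2K}}C_w(s)$). Before $\stime$, the total number of vertices appearing in short cycles is at most $2K\sqrt{\log n}$, so on $\{s<\stime\}$ the number of good $j$ lies in $[\,n-2K\sqrt{\log n}-1,\,n-1\,]$. Summing yields
\[
\lambda_w^{-}(s)\;=\;b(w)\cdot\frac{2}{n-1}\cdot\#\{\text{good }j\}\;\in\;2b(w)\Big[1-\tfrac{\sqrt{\log n}}{n},\,1\Big]
\]
for all $n$ large enough, which is \eqref{eq:deathest}. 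Good vertices were chosen precisely so that the only effect of the firing on the list of short cycles is to kill $\gamma_i$; all other cycles in the list remain unaffected, giving (i) in this scenario.

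\textit{Step 3 (martingale property).} The joint process $(\pi_1(s),\dots,\pi_d(s))$ is a continuous-time Markov chain with finitely many states, and $\mcal{I}_i$ is the indicator of a hitting event driven by a specific subset of its jumps. Standard compensator theory (Dynkin's formula applied to the indicator $\bone\{\xi_i\le s+h\}$, stopped at $\xi_i$) yields that \eqref{eq:dyingcount} is a local martingale, with $\lambda_w^{-}$ the rate identified in Step 2.

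\textit{Step 4 (definition of $\tau^*$ and stochastic domination).} Define $\tau^*$ as the first time $>s$ at which a single transposition firing makes more than one of the coordinates of $\mcal{I}$ jump. Since a coordinate jumps only upon destruction of the corresponding initial short cycle, such a simultaneous jump can occur only when the fired $\tau_{i,j}$ has both endpoints $i,j$ lying in $\bigcup_i V_i$. Before $\stime$, the total number of such pairs is at most $\binom{2K\sqrt{\log n}}{2}=O(\log n)$, each firing at rate $2/(n-1)$, so the total rate of a simultaneous-jump event is bounded by $c\log n/n$ for a constant $c$ depending only on $K$. A standard thinning/coupling argument (bound the first jump of a rate-$\le c\log n/n$ point process by an independent exponential of the same rate) gives $\P(\tau^*\wedge\stime>t)\ge\P(E^*\wedge\stime>t)$ after absorbing the constant $c$ into $K$, establishing (iv).

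\textit{Main obstacle.} The only delicate point is Step 1: one must verify case-by-case that multiplication by $\tau_{v,j}$ at a good vertex with good $j$ actually destroys $\gamma_i$ as a cycle (as opposed to splitting, merging, or leaving it intact), and that at a bad vertex the cycle is genuinely preserved up to relabeling. This requires unwinding the action of a left transposition on a directed, edge-labeled closed walk; the conditions encoded in $\tau_1$ and $\tau_2$ (no shared vertices among short cycles and few short cycles) are what reduce the analysis to these clean local cases and let us ignore anomalous interactions that would otherwise need to be subtracted.
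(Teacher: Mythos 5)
Your proposal is correct and follows essentially the same route as the paper: classify the $b(w)$ vertices whose incident letters share a sign as the ones whose transposition opens the cycle, compute the killing rate as $2/(n-1)$ times the number of choices of the other endpoint, use the bound on short-cycle vertices provided by $\tau_2$ to get \eqref{eq:deathest}, invoke standard compensator theory for \eqref{eq:dyingcount}, and bound simultaneous jumps by transpositions with both endpoints in short-cycle vertex sets. One small difference: you define $\tau^*$ as the first time two coordinates of $\mcal{I}$ actually jump together, whereas the paper defines $\tau^*$ as the first time a transposition selects both of its endpoints from short cycles; yours is a larger stopping time and makes item (i) tautological, but the rate estimate in (iv) is identical, so both choices work. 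One tiny inaccuracy in your Step 1: when the two incident letters have opposite signs, the vertex is not always replaced by $j$ on both edges — if both edges are directed out of $v$ the transposition changes nothing at $v$, and if both are directed in, $v$ is relabeled to $j$ — but in either subcase the cycle survives, so your conclusion stands.
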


\begin{proof}[Proof of Lemma \ref{lem:deathrate}]
By the homogenous Markov property, it suffices to consider the case of $s=0$. 
Thus consider all cycles existing at time $0$. Suppose $\mcal{I}(0)=x$.  Consider the probability that the next transposition will turn the count to $x+e_i$, where $e_i$ is the standard basis in $\rr^N$ and $i$ is some coordinate which is currently $0$. 
Consider the following collection of transpositions that can destroy the $i$th cycle while keeping others unchanged. Let $w$ be the word of the $i$th cycle. As explained earlier in subsection \ref{hal} and Figure \ref{fig:timecycle}, a cycle with word $w$ vanishes if one of the vertices (say $u$) incident at $b(w)$ many spots is involved in the transposition. Now given $u$, the proportion of $v$ such that the transposition $[u,v]$  leads to $x+e_i$ and $v$ is not a vertex of any short cycle is in between $1$ and $1- \theta(0)$ (recall $\theta(s)$ from Definition \ref{propgood}). Call this proportion $\delta_w(0)$.
Let $\tau^*$ denote the first time that both vertices in the transposition are selected from the short cycles. Then, until $\tau^* \wedge \stime$, the coordinates of $\mcal{I}$ do not jump together and the infinitesimal death rate at time $0$ makes sense:
\eq\label{eq:dratem}
\lambda^-_w(0):= \lim_{h\rightarrow 0} h^{-1} P\left[ \mcal{I}(h)=x + e_i, \mid \mcal{G}_0,\; \mcal{I}(0)=x   \right]= 2 b(w)\delta_w(0).
\en
Similarly, the death rate for any other time $\lambda^-_w(s):=2 b(w)\delta_w(s)$ exists and satisfies
\[
\abs{\frac{\lambda^-_w(s)}{2b(w)} - 1} \le \theta(s)=O\left(  \frac{\sqrt{\log n}}{n} \right), \quad \text{until $\tau^* \wedge \stime$}.  
\]

 Moreover, by \eqref{eq:dratem},  each
\[
\mcal{I}_i(s+h) - \int_0^{h\wedge \xi_i} \lambda_w^-(s+v) dv
\]
is a local martingale, where $w$ is the word of the $i$th cycle. 

Finally, we estimate the tails of $\tau^*$. Both vertices are selected with a probability given by the square of the total number of vertices in short cycles over $n$. Thus, by our assumption on the short cycle count, we get
\eq\label{eq:ratetaust}
\lim_{h\rightarrow 0} \frac{1}{h} \P\left(  \tau^* \le  h \mid \mcal{G}_0 \right)\le   \frac{\log n}{n}.
\en  
Let $E^*$ denote an independent exponential random variable with mean $n/\log n$. By replacing $\mcal{G}_0$ by any other $\mcal{G}_s$, it follows from \eqref{eq:ratetaust} that $\tau^* \wedge \stime$ stochastically dominates $E^*\wedge \stime$. This completes the proof of the lemma. 
\end{proof}
\medskip

\nin\tbf{Step 2.} We now consider the infinitesimal rates at which cycles are born. 

\begin{lemma}\label{lem:birthrate} 
Let $R_w(s)$ denote the number of cycles of word $w$ that have ever existed during time $[0,s]$. Then, during $[0,\stime)$, the vector-valued process $\left(  R_w(\cdot), \; w\in \Ww'_K   \right)$ is a counting process, i.e. each $R_w$ jumps exactly by one, no two coordinates jump together. Moreover there exists nonnegative progressively measurable processes $\left(\lambda_w^+(s),\; w\in \Ww'_K \right)$ such that every $w\in \Ww'_K$
\[
R_w(s) - \int_0^s \lambda_w^+(u)du
\]
is a local martingale. For $s\in[0,\stime)$ we also have the following bound. 
\[
\frac{2b(w)}{h(w)} - C_1 \left( \frac{\sqrt{\log n}}{n} \right) \le \lambda^+_w(s) \le \frac{2b(w)}{h(w)}
\]
where the positive constant $C_1$ depends only on $d$ and $K$.
\end{lemma}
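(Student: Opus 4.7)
The plan is to compute the compensator of $R_w$ by directly counting pre-cycles (Definition~\ref{defn:trailsetal}) present in the graph at time $s$. The key observation is that a new cycle of word class $w$ appears precisely when the firing transposition $[u,v]$ coincides with the closing transposition of some pre-cycle with endpoints $\{u,v\}$ and word class $w$. Each unordered transposition $[u,v]$ fires at rate $2/(n-1)$, so the compensator is
\[
\lambda_w^+(s) \;=\; \frac{2}{n-1}\,P_w(s),
\]
where $P_w(s)$ is the number of pre-cycles modulo the reversal equivalence with word class $w$ in the graph at time $s$.

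The counting-process property on $[0,\stime)$ comes from the stopping-time definitions in \eqref{eq:defnst}: $\tau_3$ directly forbids simultaneous jumps in the total short-cycle count exceeding one, so $R_w$ (which only counts births) jumps by at most one at a time; and $s<\tau_1$ ensures each newborn short cycle is vertex-disjoint from existing short cycles, so no two coordinates of $R_{\cdot}$ jump together and the jump size is exactly one. Local martingality of $R_w(s)-\int_0^s\lambda_w^+(u)\,du$ is then the standard compensator formula for this jump process.

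For the rate bounds we enumerate specific trails that are pre-cycles, then divide by two to pass to equivalence classes. By chasing the effect of left-multiplication by $[s_0,s_k]$ on the edge maps of each layer $\pi_i$, a specific trail with word $w'$ can be closed by $[s_0,s_k]$ precisely when the first and last letters of $w'$ have the same sign. Using the $D_{2k}$ action, the number of orderings of $w$ (rotations together with inversions) with matching first and last sign is exactly $2b(w)$ by the very definition of $b(w)$, so by the orbit-stabilizer formula there are $2b(w)/h(w)$ distinct specific words in the orbit with the matching-sign property. For each such specific word, the starting vertex determines the trail, so there are at most $n$ such trails, of which $h(w)C_w(s)$ are closed. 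Summing over the good specific words and dividing by two for the forward/backward overcounting yields
\[
P_w(s) \;\le\; \frac{n\,b(w)}{h(w)} \;-\; b(w)\,C_w(s),
\]
from which the upper bound $\lambda_w^+(s)\le 2b(w)/h(w)$ follows (the additional $O(1/n)$ slack is absorbed into the lower-bound error $C_1\sqrt{\log n}/n$).

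For the lower bound we must subtract two corrections from the count of $n$ per specific good word: (i) starting vertices whose word-walk contains a repeated internal vertex, hence fails to be a valid trail; and (ii) the $h(w)C_w(s)$ closed trails already accounted for. Correction (ii) is bounded by $h(w)\sqrt{\log n}$ on $[0,\stime)$ by the definition of $\tau_2$. For correction (i), each invalid start forces a cycle of length at most $k\le K$ embedded inside the length-$k$ word-walk; the number of such configurations is thus dominated by a constant (depending only on $d$ and $K$) times $\sum_{|w'|\le 2K}C_{w'}(s)\le \sqrt{\log n}$, again by $\tau_2$. Combining the two corrections, both of order $\sqrt{\log n}$, and multiplying by $2/(n-1)$ produces the stated error $C_1\sqrt{\log n}/n$ with $C_1$ depending only on $d$ and $K$. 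The main technical obstacle is correction (i): one must carefully match each failed starting vertex to a shorter cycle embedded in its word-walk in order to invoke the $\tau_2$ bound cleanly; the combinatorial bookkeeping parallels the analogous arguments in \cite[Section 4]{JP14}.
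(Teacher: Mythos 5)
Your proposal is correct and follows essentially the same strategy as the paper's proof: identify $\lambda^+_w$ as the transposition rate times the number of pre-cycles, bound that count by enumerating, for each of the $2b(w)/h(w)$ sign-compatible words in the $D_{2k}$-orbit, the at most $n$ trails determined by a starting vertex, and control the corrections (closed trails and repeated-vertex walks) via $\tau_2$. The cosmetic difference is that you count by (specific word, starting vertex) pairs and subtract the $h(w)C_w(s)$ closed trails explicitly, whereas the paper (Lemma~\ref{lem:precyclecount}) counts (specific word, vertex, position) triples and folds the closed-trail case into the repeated-vertex correction before dividing by $k+1$. One point worth flagging: you correctly use the exact unordered-transposition rate $2/(n-1)$ from the paper's formal setup, while the paper's own proof of this lemma silently substitutes $1/n$ for the ordered rate; with $2/(n-1)$ the stated sharp upper bound $\lambda^+_w\le 2b(w)/h(w)$ picks up a multiplicative $n/(n-1)$ factor, and your remark about absorbing that slack ``into the lower-bound error'' does not resolve it (an upper-bound excess cannot be hidden in a lower-bound defect) — either one adopts the $1/n$ convention as the paper does, or the upper bound should read $2b(w)/h(w)+O(1/n)$.
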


That the infinitesimal rates $\lambda_w^+(u)$ exist for all $u$ is not hard to see. Consider the conditional law of the graph, conditioned on $\mcal{G}_0$. There is a certain number of transpositions which, if multiplied, increases any cycle count. Since, the vertices have independent exponential clocks, there exists an infinitesimal rate of increase of cycle counts $\lambda_w^+(u)$. We now compute it using the following lemma. Recall $S_u$ (Definition \ref{defn:trailsetal}) is the set of of pre-cycles with word $u$.

\begin{lemma}\label{lem:precyclecount}
Suppose we are given $d$ permutations on $n$ labels: $\pi_1, \ldots, \pi_d$. Consider the graph generated by these permutations. Let $A_j$ denote the set of vertices on this graph that lie on a cycle of length at most $j$. Given a short word $w$ of length $k$, there exists a positive constant $c_1$, depending on $d$ and $K$ ($1\le k \le K$), such that 
\[
2n \frac{b(w)}{h(w)} - c_1 A_k \le \sum_{u\sim w} \abs{S_u} \le 2n \frac{b(w)}{h(w)},
\]  
where $u\sim w$ means that $u$ and $w$ are in the same equivalence class in $\Ww'_k/D_{2k}$. 
\end{lemma}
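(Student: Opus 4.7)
The plan is to count $|S_u|$ for a single ordered word $u = u_1\cdots u_k$ representing the equivalence class and then aggregate over the $2k/h(w)$ members of $[w]$. The whole argument proceeds by interpreting $|S_u|$ as a count over starting vertices and separating out a small defect proportional to $|A_k|$.

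First I would observe that for every starting vertex $s_0 \in [n]$ and fixed word $u$, successive application of the signed permutations produces a unique walk $(s_0,s_1,\ldots,s_k)$. This walk is a trail in the sense of Definition~\ref{defn:trailsetal} when it has no repeated vertices other than possibly $s_0 = s_k$, and it qualifies as a pre-cycle precisely when (a) the walk is open, (b) it is internally simple, and (c) the signs of $u_1$ and $u_k$ coincide. Condition (c) is a property of $u$ alone: when it fails, $|S_u| = 0$, since no transposition can turn the open trail into a cycle with the same word. When it holds, the only candidate transposition is $\sigma = [s_0,s_k]$, and an edge-by-edge check—handling separately the cases where both outer letters are positive or both are negative—shows that acting by $\sigma$ on the graph sends the open simple trail to a cycle with word $u$; hence every sign-good, open, simple walk belongs to $S_u$ and vice versa.

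Next I would bound the number of bad starting vertices for sign-good $u$, those whose walk is closed or has an internal coincidence. Any such bad walk contains a sub-cycle of length at most $k$, and every vertex of that sub-cycle lies in $A_k$ by definition; in particular at least one of $s_0,s_1,\ldots,s_k$ is in $A_k$. Since each map $s_0 \mapsto s_i$ is a bijection of $[n]$ (a composition of signed permutations), each $v \in A_k$ is hit by at most one $s_0$ per position $i$, so the number of bad starts is at most $(k+1)|A_k|$. This yields, for sign-good $u$,
\[
n - (k+1)|A_k| \;\le\; |S_u| \;\le\; n.
\]

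Finally, I would sum over $u \sim w$, the key combinatorial input being that exactly $2b(w)/h(w)$ of the $2k/h(w)$ distinct words in $[w]$ are sign-good. This follows from the very definition of $b(w)$: among the $k$ cyclic rotations of a representative, exactly $b(w)$ have first and last letter of matching sign, and the $k$ reflected rotations contribute another $b(w)$, after which collapsing the $h(w)$-fold periodicity (under which sign-goodness is invariant) yields $2b(w)/h(w)$ distinct sign-good representatives. Multiplying the per-$u$ bound by this count gives
\[
\frac{2b(w)}{h(w)}\bigl(n - (k+1)|A_k|\bigr) \;\le\; \sum_{u\sim w}|S_u| \;\le\; \frac{2nb(w)}{h(w)},
\]
which is the stated inequality with $c_1 = 2(K+1)b(w)/h(w) \le 2K(K+1)$. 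The main delicate point is the identification of sign-good ordered representatives with the pre-cycles derived from cycles in the graph; the Case A / Case B edge analysis that justifies this, together with the tally of sign-good elements of $[w]$, is the principal combinatorial content, while the counting of bad starting vertices is then routine.
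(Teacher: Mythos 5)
Your proof is correct and follows the same overall decomposition as the paper's: identify that $|S_u|>0$ only for the $2b(w)/h(w)$ sign-good representatives $u\sim w$, show $n-O(|A_k|)\le |S_u|\le n$ for each such $u$, and sum. The one place you deviate is in bounding the number of ``bad'' starts. The paper considers, for each position $j\in\{0,\dots,k\}$ and vertex $x$, the unique candidate trail with $s_j=x$, and then argues that a bad candidate forces $x$ to lie within graph-distance $k$ of a cycle of length at most $k$, of which there are at most $K(2d)^K|A_K|$ vertices; combined with the identity $\sum_{x,j,s\in S_u}1\{s_j=x\}=(k+1)|S_u|$, this gives the paper's constant $c_1=2K^2(2d)^K$. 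You instead fix $j=0$ and observe directly that a bad walk must put some $s_i$ in $A_k$, and since each $s_0\mapsto s_i$ is a bijection of $[n]$, the bad starting vertices number at most $(k+1)|A_k|$; this bypasses the neighborhood count entirely, eliminates the $(2d)^K$ factor, and gives the cleaner $c_1=2K(K+1)$. Both yield the stated lemma, since only the existence of some $c_1=c_1(d,K)$ is needed.
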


\begin{proof}[Proof of Lemma \ref{lem:precyclecount}] Fix a word $w$. Consider an $u \sim w$ and some $x\in [n]$. For $\abs{S_u} > 0$, we need $u_1$ and $u_k$ to have the same sign. Therefore, the number of such $u$'s is $2b(w)/h(w)$. 

For such an $u$ we claim that there is at most one pre-cycle with word $u$ whose $j$th vertex is $x$ for $j=0,1,\ldots, k$. This is true, since the trail of the only possible pre-cycle is given by
\[
\begin{split}
s_j&=x, s_{j+1}= u_{j+1}(s_j), s_{j+2}= u_{j+2}(s_{j+1}), \ldots,\\
s_{j-1}&= u_j^{-1}(s_j), s_{j-2}= u_{j-1}^{-1}\left( s_{j-1} \right), \ldots. 
\end{split}
\]
Such a trail need not be valid due to repeated vertices (including the first and the last which will make it closed). Hence, the qualifier `at most'. However, suppose that there are two indices $i_1 < i_2$ such that $s_{i_1}=s_{i_2}$, then the closed trail from $i_1$ to $i_2$ forms a cycle of size $i_2-i_1$ which is at a graph distance at most $k$ from $x$. 

There are at most $k(2d)^k\abs{A_k}$ many vertices in the graph that are at a distance at most $k$ from a cycle of length at most $k$. For all other vertices, the trail given above is not closed, has no repeated vertices and is a pre-cycle with word $u$. 

Hence, if we define $c_0=K(2d)^K$, then, for a fixed $u\sim w$ and any $1\le k \le K$, we have
\[
(k+1)\left( n -  c_0 \abs{A_K} \right) \le  \sum_{x\in [n]} \sum_{j=0}^k \sum_{s\in S_u} 1_{\{ s_j=x \}} \le (k+1) n.
\]
On the other hand, interchanging the order of summation above we get
\[
\sum_{x\in [n]} \sum_{j=0}^k \sum_{s\in S_u} 1_{\{ s_j=x \}} = \sum_{s\in S_u}  \sum_{j=0}^k \sum_{x\in [n]} 1_{\{ s_j=x \}}= (k+1)\abs{S_u}. 
\]

Combining the two estimates we get
\[
\left( n -  c_0 \abs{A_K} \right) \le \abs{S_u} \le n.
\]
Summing up over all possible $u\sim w$ proves the lemma with $c_1=2Kc_0$. 
\end{proof}

\begin{proof}[Proof of Lemma \ref{lem:birthrate}]
We now return to computing the infinitesimal rate of increase of cycles. As before, it suffices to calculate the rate at time $0$ and appeal to the Markov property to argue for every other time. At time $0$, every possible transposition $[x,y]$, where $(x,y)$ is ordered, occurs with rate $1/n$. Fix a short word $w$. We ask: how many transpositions will turn a pre-cycle to a cycle with word $w$?  

To answer this question suppose the transposition is $[x,y]$ for $x\neq y$. Let $u \sim w$ be a word with the first and last letters having the same sign. Then, we are limited to all pre-cycles with word $u$ that either start with $x$ and end with $y$, or start with $y$ and end with $x$. Out of these, every pair of equivalent pre-cycles produce the same cycle by the same transposition. Thus, every such pair is to be counted once. Hence, the rate at which cycles with word $w$ get created is given by
\[
\begin{split}
\frac{1}{2n}\sum_{(x, y) \in [n]^2,\; x\neq y}\quad \sum_{u \sim w} \sum_{s\in S_u} \left[ 1_{\{ s_0=x, s_k=y \}} + 1_{\{ s_0=y, s_k=x \}} \right]
\end{split}
\]     

As before, we interchange the order the summation we get
\eq\label{eq:notrans}
\begin{split}
\frac{1}{2n}\sum_{u \sim w} &\sum_{s\in S_u} \sum_{x\in [n]} \sum_{y\neq x} \left[ 1_{\{ s_0=x, s_k=y \}} + 1_{\{ s_0=y, s_k=x \}} \right]\\
& =\frac{1}{2n}\sum_{u \sim w} \sum_{s\in S_u} \sum_{x\in [n]}  \left[ 1_{\{ s_0=x \}} + 1_{\{s_k=x \}} \right]=  \frac{1}{n}\sum_{u\sim w} \abs{S_u}.
\end{split}
\en

The final number has been counted in Lemma \ref{lem:precyclecount}; we get 
\eq\label{eq:rateborn}
\lim_{h\rightarrow 0+} h^{-1} \E\left[ R_w(s+h)-R_w(s) \mid \mcal{G}_s  \right]=\lambda_w^+(s)=  \frac{1}{n}\sum_{u\sim w} \abs{S_u}.
\en
Note that the bound above works for all time points, not just those before $\stime$.
The proof of the lemma now follows from Lemma \ref{lem:precyclecount} and the fact that upto $\stime$,  $\sqrt{\log n}$ is an upper bound of the number of short cycles. Also the process is a counting process until $\stime$ by definition.
\end{proof}
\begin{rmk}\label{brate1}
However note that the rate at which more than one cycle gets created is the rate at which a transposition $[x,y]$ occurs where the pair $x,y$  belong to two precycles which mean that both $x,y$ lie on a cycle of size at most $2K.$ This is the first time we need to use a bound on cycles of length $2K$ to bound rates concerning cycles of length at most $K$. This explains the definition of $\tau_2.$ Now by definition of $\tau_2$ upto $\bar \sigma$ there are at most $O(\log n)$ ( the constant in the $O$ term depends only on $K$), such transpositions. Also such a transposition can create at most $K$ many new cycles since every point is on at most $K$ precycles as discussed in the proof of Lemma \ref{lem:precyclecount}. Thus denoting for all $s\in [0,\bar \sigma)$ the rate at which exactly one cycle gets created by $\tilde{\lambda}^+_w(s)$ 
we have
$$\tilde{\lambda}^+_w(s)=\lambda_w^+(s)-O\left(\frac{\log n}{n}\right)$$ where $\lambda_w^+(s)$ appears in \eqref{eq:rateborn} and the constant in the $O$ term depends only on $K$. 
\end{rmk}
\nin\tbf{Step 3.} We define new processes by extending the birth rates $\tilde{\lambda}^+_w$ and death rates $\lambda^-_w$(Lemma \ref{lem:deathrate}) from $[0,\stime)$  to  all times in $[0, \infty)$ by defining
\[
\tilde{\lambda}_w^+(s)=\frac{2b(w)}{h(w)}, \quad \lambda_w^-(s)= 2b(w), \quad s\ge \stime,\; w\in \Ww'_K.  
\]
Consider the following time changes that are measurable with respect to the predictable $\sigma$-algebra:
\[
\Gamma^+_w(s)=\int_0^s \tilde{\lambda}_w^+(v)dv, \quad w\in \Ww'_K, \quad \Gamma_w^-(s)=\int_0^s \lambda_w^-(v)dv.  
\]

We now extend the counting process $R$ from $[0,\stime)$ to the entire time axis. Let us for the moment call it $\tilde{R}$. During time $[0, \stime)$ the coordinates of $\tilde{R}$ are exactly the same as $R$ as in Lemma \ref{lem:birthrate}. At $\stime$ in the actual process, $R_{w}$ can jump by more than $1$, however $\tilde{R}_w$ in that case does not jump at all. Lastly during $[\stime, \infty)$, each $\tilde{R}_w$ continues as an independent Poisson process with rate $2b(w)/h(w)$. Clearly, this extended process is a counting process on $[0, \infty)$. 
Thus for all $w \in \Ww'_K$ and $s\in [0, \stime]$ $$\tilde{R}_w(s)=\sum_{t\le s} \Delta R_{w}(t)\mathbf{1}(\Delta R_{w}(t)=1)$$
where for any $t$ $$\Delta R_{w}(t)=R_{w}(t)-R_{w}(t_{-})$$ is the jump size at $t.$
The above sum makes sense since the process $R_w$ has only finitely many jump points in any finite time interval.
Let $A^+_w(\cdot)$ be the compensator for the coordinate process $\tilde{R}_w(\cdot)$. Then by construction $A^+_w(s)=\Gamma^+_w(s)$ for $s\in [0, \stime)$. To see this notice that by definition the rate of increase of the process $\tilde{R}$ at any time is the rate at which one cycle is produced in the process $R$. This is $\tilde{\lambda}_w^+(s)$ for all times $s< \stime $
and equal to $2b(w)/h(w)$ on the interval $[\stime,\infty).$

\begin{lemma}\label{lem:formalpp}
It is possible to extend our current probability space to define a Poisson point processes $Q$ on $\Ww'_K \times [0,\infty)$ such that the following holds. The intensity measure for $Q$ on $\Ww_K'\times (0,\infty)$ is the $\Ww'_K$-fold product of $\leb$. At $\Ww'_K \times \{0\}$ the count coincides with the number of short cycles at time $0$. 
Moreover, if we consider the extended counting process $\left( \tilde{R}_w(\cdot),w\in \Ww'_K  \right)$  in Lemma \ref{lem:birthrate} as a point process on $\Ww'_K \times [0, \infty)$ as described above, then
\[
\tilde{R}_w(s)=Q_w\left(  A^+_w(s)  \right),\; s\in [0, \infty), \; w\in \Ww'_K. 
\]
\end{lemma}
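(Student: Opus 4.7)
The plan is to apply the multivariate time-change theorem of Watanabe--Meyer for orthogonal counting processes with absolutely continuous compensators (see, e.g., Br\'emaud, \emph{Point Processes and Queues}, Ch.~II). The three hypotheses I will need to verify are: (a) for each $w\in\Ww'_K$, the process $\tilde R_w(s)-A^+_w(s)$ is a local martingale with respect to a suitable filtration; (b) for distinct $w_1,w_2\in\Ww'_K$, the processes $\tilde R_{w_1}$ and $\tilde R_{w_2}$ have no common jump times (the orthogonality hypothesis); and (c) $A^+_w(\infty)=\infty$ almost surely.

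Hypothesis (a) on $[0,\stime)$ is precisely Lemma \ref{lem:birthrate} together with Remark~\ref{brate1}, which identify $\tilde\lambda^+_w$ as the instantaneous rate at which $\tilde R_w$ increases by one; on $[\stime,\infty)$ the extension of $\tilde R_w$ is an independent Poisson process of rate $2b(w)/h(w)=\tilde\lambda^+_w(s)$, so the compensator equals $\Gamma^+_w$ globally and agrees with $A^+_w$ by construction. Hypothesis (b) on $[0,\stime)$ follows from the definition of $\tau_3$, which forces the aggregate short-cycle counting process to have only unit jumps, hence no two coordinates can jump simultaneously; on $[\stime,\infty)$, independent Poisson streams a.s.\ have no common jumps. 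Hypothesis (c) is immediate since $\tilde\lambda^+_w$ is bounded below by the positive constant $2b(w)/h(w)$ for $s\ge\stime$, and $A^+_w$ is continuous and strictly increasing throughout.

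Granted these, Watanabe's theorem (in its multivariate form) yields that the time-changed processes
\[
Q_w(t)\;:=\;\tilde R_w\bigl((A^+_w)^{-1}(t)\bigr),\qquad w\in\Ww'_K,\ t\ge 0,
\]
are independent unit-rate Poisson processes on $[0,\infty)$, and the identity $\tilde R_w(s)=Q_w(A^+_w(s))$ follows from the continuity and strict monotonicity of $A^+_w$. Bundling the $Q_w$'s into the random measure $Q=\sum_w\sum_i\delta_{(w,\,t_i^w)}$ on $\Ww'_K\times(0,\infty)$ produces a Poisson point process with intensity equal to the $\Ww'_K$-fold product of Lebesgue measure, as required. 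At $\Ww'_K\times\{0\}$, I simply adjoin one atom per short cycle with word $w$ present at time $0$; this is a bookkeeping adjustment that plays no role in the time-change argument.

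The main obstacle is arranging the filtration and the extension past $\stime$ so that Watanabe's theorem applies verbatim. The cleanest route is to enlarge the probability space at the outset by adjoining an independent family $\{\Pi_w\}_{w\in\Ww'_K}$ of homogeneous Poisson processes of respective rates $2b(w)/h(w)$, and use these to drive $\tilde R_w$ on $[\stime,\infty)$. This enlargement (i) preserves the local-martingale property of $\tilde R_w-A^+_w$, since the pre-$\stime$ dynamics are adapted to the original filtration and the post-$\stime$ contribution is a genuine compensated Poisson with respect to the enlarged filtration; (ii) preserves orthogonality, as the $\Pi_w$'s are chosen mutually independent and independent of the original process; and (iii) supplies the exogenous randomness that the time-change representation implicitly requires.
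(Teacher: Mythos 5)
Your proposal is correct and takes essentially the same approach as the paper: the paper also invokes the Watanabe--Meyer theorem (citing Brown for a proof) and notes that the space must be enlarged to extend the process past $\stime$. You simply spell out the three hypotheses that the paper leaves implicit — local-martingale property of $\tilde R_w - A^+_w$ via Lemma~\ref{lem:birthrate}, orthogonality via the counting-process property established there (equivalently $\tau_3$), and divergence of $A^+_w$ — and the construction of the enlarged space via adjoined independent Poisson streams matches the paper's intent.
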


\begin{proof}[Proof of Lemma \ref{lem:formalpp}] This is a consequence of the Watanbe-Meyer theorem \cite{Watanabe64, Meyer71} that every counting process is a time-changed Poisson process. For a proof see \cite{Brown88}. The compensator has already been defined above. The extension of the space is necessary to extend the point process on $[\stime,\infty)$.
\end{proof}

\medskip

\nin\tbf{Step 4.} We can now extend the point process $\left( R_w(\cdot) \right)$ to an extension of the point process $\chi_K(T)$ in Proposition \ref{lem:jumpone} by noting the lifetime of each cycle. Formally, for every atom $(w,s)$ in $R$, extend it to $(s,v,w)$, where $v$ is the length of time that the short cycle which gets created at time $s$ exists. If a cycle with word $w$ exists beyond $\stime$ or gets born after $\stime$, the (possibly excess) lifetime will be i.i.d. $\mathrm{Exp}(2b(w))$. We will refer to this extended process by $\overline{R}(s,v,w)$. Thus $\chi_K\equiv \overline{R}$ during $s\in [0, \stime)$.

\begin{lemma}\label{lem:pppconv}
As $T$ goes to infinity, $\overline{R}$ converges to a PPP on $[0, \infty) \times [0,\infty) \times \Ww'_K$ that has independent $\mathrm{Poi}\left(  1/h(w) \right)$ many atoms for $w$ at time $s=0$, births at rate $2b(w)/h(w)\times \leb$ and a lifetime of $\mathrm{Exp}(2b(w))$.
\end{lemma}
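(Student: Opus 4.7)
The plan is to combine the time-changed Poisson representation of Lemma \ref{lem:formalpp} with the infinitesimal rate estimates from Lemmas \ref{lem:birthrate} and \ref{lem:deathrate}. The key reduction is to show that $\stime \to \infty$ in probability as $T$ (equivalently $n$) tends to infinity, so that on any compact window $[0,S]$ the extended process $\overline{R}$ agrees with the true point process of births and lifetimes with probability tending to one. For $\tau_1$ and $\tau_3$, I would invoke the stochastic domination $\tau^* \wedge \stime \ge E^* \wedge \stime$ with $E^*$ of mean $n/\log n$ from Lemma \ref{lem:deathrate}. For $\tau_2$, I would control $\E\!\left[\sup_{s \le S}\sum_{w \in \Ww'_{2K}} C_w(s)\right]$ by observing that the one-time marginal is $\sum_{w \in \Ww'_{2K}} 1/h(w) + o(1) = O(1)$ (by stationarity of the transposition chain and the finite-dimensional result of \cite{JP14}), then extending to the supremum via the uniform birth rate bound of Lemma \ref{lem:birthrate} applied also to words of length $\le 2K$, and finally applying Markov's inequality to keep the sum below $\sqrt{\log n}$ on $[0,S]$.

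Granted $\stime \to \infty$, the compensator $A_w^+(\cdot)$ from Lemma \ref{lem:formalpp} satisfies $A_w^+(s) \to (2b(w)/h(w))\, s$ uniformly on compact sets, since by Remark \ref{brate1}, $\tilde{\lambda}_w^+(s) = 2b(w)/h(w) + O(\log n / n)$ uniformly in $s$ on $[0,\stime)$ and equals exactly the limit afterwards. Because $\tilde{R}_w(s) = Q_w(A_w^+(s))$ for independent unit-intensity Poisson processes $Q_w$, the continuous mapping theorem yields joint convergence of $(\tilde{R}_w, w \in \Ww'_K)$ to independent homogeneous Poisson processes of rate $2b(w)/h(w)$. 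An analogous argument based on Lemma \ref{lem:deathrate}, which gives $\lambda_w^-(s) = 2b(w) + O(\sqrt{\log n}/n)$ uniformly on $[0,\stime)$, shows that the lifetime attached to each newborn cycle converges in distribution to $\mathrm{Exp}(2b(w))$, jointly independent across cycles via the time-change representation of the dying indicator process $\mathcal{I}$ from Step 1.

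For the initial condition at $s=0$, I would invoke \cite[Thm 14, Cor 15]{JP14}, which gives joint convergence of $(C_w(0), w \in \Ww'_K)$ to independent $\mathrm{Poi}(1/h(w))$ variables; the residual lifetimes of these initial cycles then fall under the same death-rate analysis and converge to independent $\mathrm{Exp}(2b(w))$ (with the memoryless property absorbing any conditioning on having survived until time $0$). Assembling the three components — initial atoms at $\{0\}\times[0,\infty)\times \Ww'_K$, birth Poisson processes on $(0,\infty)\times \Ww'_K$, and i.i.d. exponential lifetimes — produces the claimed limiting PPP. The main obstacle will be verifying joint asymptotic independence across different words: in principle one transposition could spawn two distinct short cycles simultaneously, coupling the marginal counts. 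Remark \ref{brate1} bounds the rate of such coincidences at $O(\log n / n)$, and the definition of $\tau_3$ rules out double jumps up to $\stime$; together these ensure that the vector-valued compensator converges to the diagonal deterministic limit, which via Lemma \ref{lem:formalpp} gives joint independence in the product Skorokhod topology.
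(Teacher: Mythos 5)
Your plan matches the paper's three ingredients exactly: the time-change representation from Lemma~\ref{lem:formalpp}, the rate estimates from Lemmas~\ref{lem:birthrate} and~\ref{lem:deathrate}, and a proof that $\stime \to \infty$ in probability. The paper, however, delegates that last step to a separate statement, Lemma~\ref{lem:stimeinf}, stated and proved just after Lemma~\ref{lem:pppconv} (and forward-referenced in its proof). Since you could not see that lemma, you sketch the argument inline, and that sketch has a gap.

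Concretely, for $\tau_1$ and $\tau_3$ you propose to invoke the stochastic domination from Lemma~\ref{lem:deathrate}(iv), namely $\P(\tau^* \wedge \stime > t) \ge \P(E^* \wedge \stime > t)$. This cannot, on its own, produce a lower bound on $\P(\stime > t)$: the stopping time $\stime$ appears on both sides, so the inequality collapses whenever $\stime$ is small. Moreover $\tau^*$ (both endpoints of the transposition lie on short cycles) is a different event from $\tau_1$ (a newborn short cycle shares a vertex with an existing one) or $\tau_3$ (the short-cycle count jumps by two), so the cited domination does not directly control them. Your underlying intuition --- that all of these require a transposition hitting two vertices in or near short cycles, an event of rate $O(\log n/n)$ given a $\sqrt{\log n}$ bound on cycle counts --- is precisely the paper's. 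But the paper's Lemma~\ref{lem:stimeinf} formalizes it by a two-stage conditioning that avoids circularity: it first fixes high-probability events $A_0$ (controlling the initial configuration, via \cite[Cor.\ 24, 25]{JP14}) and $A_1$ (bounding the number of length-$\le 2K$ cycles ever born on $[0,S_0]$, using the unconditional upper bound on the birth rate in \eqref{eq:rateborn} rather than the one restricted to $[0,\stime)$). On $A_0 \cap A_1$ the cycle counts stay below $\sqrt{\log n}$ for all $s\le S_0$, so $\tau_2 > S_0$ deterministically; only then does it bound, conditionally, the rates of $\tau_1$ and the three ways $\tau_3$ can occur (split into $\tau_4,\tau_5,\tau_6$) and compare to exponentials with rate $O(\log n/n)$. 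Replacing your appeal to Lemma~\ref{lem:deathrate}(iv) with this two-stage argument closes the gap; the rest of your proof (compensator convergence via continuous mapping, death-rate estimate for lifetimes, \cite[Thm.\ 14, Cor.\ 15]{JP14} for the initial condition, and Remark~\ref{brate1} with $\tau_3$ for joint independence) agrees with the paper's.
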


\begin{proof}[Proof of Lemma \ref{lem:pppconv}]
The proof follows from the coupling in Lemma \ref{lem:formalpp}, the estimate in Lemma \ref{lem:deathrate}, and Lemma \ref{lem:stimeinf} below. Since the arguments are standard we outline the major steps and skip the details. 

It suffices to argue that if we take finitely many disjoint intervals on the time line, then there are independent Poisson many births of the correct rate, and that, each such newborn cycle survives an independent exponential amount of time. As $T\rightarrow \infty$, $M_{T}=n$ goes to infinity in probability. Therefore, by Lemma \ref{lem:stimeinf} below, uniformly over compact sets in $s$ ,
\[
\lim_{T\rightarrow \infty} \Gamma_w^+(s)\stackrel{p}{=} \frac{2b(w) s}{h(w)}, \quad \forall \; w\in \Ww'.
\]
The coupling in Lemma \ref{lem:formalpp} then gives us weak convergence on the birth counts. Conditioned on the birth counts, Lemma \ref{lem:deathrate} allows us to couple each lifetime with independent exponentials. The lemma now follows from the explicit error bounds given in \eqref{eq:deathest}. 
\end{proof}

\nin\tbf{Step 5.} So far we have coupled $\chi_K$ with another point process $\overline{R}$ whose weak limit is the limiting field. The two fields are identical during time interval $[0, \stime)$. Hence, Proposition \ref{lem:jumpone} follows once we argue the following.

\begin{lemma}\label{lem:stimeinf}
\eq\label{eq:asyminf}
\lim_{n\rightarrow \infty} P\left( \stime \le S  \right)=0, \quad \text{for all $S>0$}.
\en
\end{lemma}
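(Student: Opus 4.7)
The plan is to bound $P(\tau_i \le S)$ for $i=1,2,3$ separately and combine by union bound. The natural order is to first control $\tau_2$ (using only unconditional rate estimates, to avoid circularity), and then use the resulting control on the short-cycle count to bound $\tau_1$ and $\tau_3$ on the event $\{\tau_2 > S\}$.

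For $\tau_2$, the key observation is that the birth-rate identity \eqref{eq:rateborn} combined with Lemma \ref{lem:precyclecount} (applied to words of length up to $2K$) gives an \emph{unconditional} bound of the form $\lambda^+_w(s) \le 2b(w)/h(w)$ for every $s$. Hence the total rate at which new cycles of length at most $2K$ are created is bounded by a constant $B=B(d,K)$, and by Remark \ref{brate1} the jump size is bounded by a constant $M=M(d,K)$. On the other hand, every existing short cycle of word $w$ dies at rate at least $b(w) \ge 1$. Consequently, the total count $C(s):=\sum_{w\in\Ww'_{2K}} C_w(s)$ is stochastically dominated by a jump Markov process $\tilde C$ with constant input rate $B$, jumps of size at most $M$, and per-particle removal rate $1$. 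Starting from $\tilde C(0)$ distributed essentially as a bounded-mean Poisson sum (the cycle counts in the stationary random $2d$-regular graph), the process $\tilde C$ is positive recurrent and $\sup_{s\le S}\tilde C(s) = O_P(1)$ uniformly in $n$. Thus $P(\tau_2 \le S) \le P(\sup_{s \le S}\tilde C(s) > \sqrt{\log n}) \to 0$.

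For $\tau_3$, on the event $\{\tau_2 > S\}$ the set $V^*$ of vertices lying on cycles of length at most $2K$ has cardinality at most $2K\sqrt{\log n}$ throughout $[0,S]$. A jump of size $>1$ in $\sum_{w\in\Ww'_K}C_w$ requires a single transposition to simultaneously alter two distinct short cycles, which forces both endpoints of the transposition to lie in $V^*$. The rate of such transpositions is at most $n \cdot (|V^*|/n)^2 = O(\log n/n)$, yielding $P(\tau_3 \le S,\tau_2 > S) = O(S\log n/n)\to 0$. For $\tau_1$, a newborn short cycle shares a vertex with an existing short cycle only if the precycle that closed up passed through a vertex in $V^*$; since any given vertex lies on $O_{d,K}(1)$ precycles of length at most $K$ (by the counting in the proof of Lemma \ref{lem:precyclecount}), there are only $O(\sqrt{\log n})$ such precycles, and each completes at rate $O(1/n)$. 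Hence $P(\tau_1 \le S,\tau_2 > S) = O(S\sqrt{\log n}/n)\to 0$.

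The main obstacle is Step 1: we need rate estimates valid \emph{without} conditioning on $\stime$, since $\tau_2$ itself feeds into the definition of $\stime$. Lemma \ref{lem:precyclecount} provides precisely such an unconditional combinatorial upper bound on the birth rate, while the per-cycle death rate $2b(w)$ is trivially lower bounded in a manner independent of the dynamics. With these two unconditional inputs, the stochastic domination by a classical $M/M/\infty$-type queue completes the argument, and the $\tau_1$ and $\tau_3$ pieces then follow by elementary union bounds.
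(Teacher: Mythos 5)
Your decomposition --- control $\tau_2$ first using unconditional birth-rate estimates, then use $\{\tau_2 > S\}$ to handle $\tau_1$ and $\tau_3$ --- matches the structure of the paper's proof, and your rate bounds for $\tau_1$ and $\tau_3$ are close to the paper's (the paper splits $\tau_3$ into the sub-cases $\tau_4,\tau_5,\tau_6$, but the content is the same; note however that in the one-cycle-destroyed/one-cycle-created case only one endpoint of the transposition need lie in $V^*$, so the ``both endpoints in $V^*$'' phrasing is slightly off, though the rate is still $o(1)$). The real problem is in your $\tau_2$ step.

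You write that ``every existing short cycle of word $w$ dies at rate at least $b(w)\ge 1$.'' Both parts of this claim fail. First, $b(w)$ can equal zero: any cyclically reduced word of even length whose letters alternate in sign, e.g.\ $\pi_1\pi_2^{-1}$ or $\pi_1\pi_2^{-1}\pi_3\pi_1^{-1}$, has $b(w)=0$. Cycles with such words are never destroyed by the random transposition dynamics, so they do not fit into an $M/M/\infty$ queue (they are also never created by it, since no rotation of $w$ begins and ends with like-signed letters, hence $|S_u|=0$ for all $u\sim w$ and $\lambda_w^+\equiv 0$; their count is a fixed Poisson random variable). Second, even for $b(w)\ge 1$, the per-cycle death rate is $2b(w)\delta_w$ with $\delta_w$ equal to the fraction of partner vertices not lying on a short cycle (cf.\ Lemma~\ref{lem:deathrate}), so the lower bound $\lambda_w^-\ge b(w)$ is \emph{not} independent of the dynamics: it presupposes control on the very quantity $\tau_2$ is defined to control. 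The circularity you flag in your closing paragraph is therefore not resolved merely by invoking the death rate; one would need an additional bootstrap (couple the domination only up to the first time the proportion of short-cycle vertices exceeds $1/2$, then verify the queue never reaches that level), and separately treat the $b(w)=0$ words.

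The paper's route around this is shorter and avoids death rates entirely. Since $\sum_{w\in\Ww'_{2K}} C_w$ can only increase through births, its running maximum over $[0,S]$ is bounded by the initial count plus the cumulative number of births. The initial count is at most $\tfrac12\sqrt{\log n}$ with high probability by the Poisson approximation for cycle counts in $G(n,2d)$ (\cite[Corollaries 24, 25]{JP14}), and the expected number of births is a constant $2KS(2d)^{2K+1}$ by the same unconditional upper bound $\lambda^+_w\le 2b(w)/h(w)$ that you use; Markov's inequality then gives fewer than $\tfrac12\sqrt{\log n}$ births with high probability. This yields $\tau_2>S$ w.h.p.\ with no mention of death rates, sidestepping both problems above.
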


\begin{proof}[Proof of Lemma \ref{lem:stimeinf}] Fix $\epsilon > 0$. Consider the stopping times defined in \eqref{eq:defnst}. First consider the event $\{\stime >0\}$. The event only depends on the graph $G(n,2d)$. We know from \cite[Corollary 25]{JP14} that, as $n$ goes to infinity, asymptotically almost surely no two cycles in the graph share a vertex. We also know that the asymptotic law of the vector $\left( C_w(0),\; w\in \Ww'_{2K}   \right)$  is the product of independent $\mathrm{Poi}\left( 1/h(w) \right)$ with a convergence in total variation. In fact, we know from \cite[Corollary 24]{JP14} that
\[
P\left(\sum_{w\in \Ww'_{2K}} C_w(0) > \frac{1}{2}\sqrt{\log n} \right) - P\left( \gamma_{2K} >  \frac{1}{2}\sqrt{\log n}     \right)\le c\frac{(2d-1)^{2K}}{n} ,  
\]
where $c$ is an absolute constant and $\gamma_{2K}$ is a Poisson random variable with mean
\[
\sum_{w\in \Ww'_{2K}} \frac{1}{h(w)}= \sum_{j=1}^{2K} \frac{a(d,j)}{2j} \le \sum_{j=1}^{2K} a(d,j) \le C_0 (2d-1)^{2K},
\]
where $C_0$ is a constant that depends only on $d$. Therefore, by taking a large enough $n$ we can guarantee that the event 
\eq\label{eq:initialconfig}
A_0:=\left\{ \sum_{w\in \Ww'_{2K}} C_w(0) < \frac{1}{2}\sqrt{\log n} \right\} \cap \left\{ \tau_1 > 0\right\} 
\en
occurs with probability at least $1-\epsilon/2$. Since $\tau_3>0$, the event $\{ \stime > 0\}$ has probability at least $1-\epsilon/2$.

Start from an initial configuration that satisfies $A_0$. The stopping time $\tau_3$ guarantees that the process of short cycle counts do not jump by more than one. Consider the possible cases when $\tau_3$ happens by defining three other stoping times. 
\begin{enumerate}[(i)]
\item $\tau_4$ is the first time two short cycles with words $w$, $w'$ (possibly same) appear simultaneously.
\item $\tau_5$ is the first time two short cycles with those words disappear simultaneously. 
\item $\tau_6$ is the first time when one short cycle appears while another disappears. 
\end{enumerate}

Consider $\tilde\tau=\tau_1 \wedge \tau_2 \wedge \tau_4$. We first evaluate the probability that $\{\tilde\tau > S\}$ for any $S > 0$. Call a transposition at time $s$ \textit{bad} if multiplication by that transposition will lead to $\tilde\tau$. Hence any time $s\in [0, \tilde\tau)$ it suffices to count the number of bad transpositions. This is a computation similar to the proof of Lemma \ref{lem:birthrate} and \ref{lem:precyclecount}.

Let $A_1$ be the event that there are less than $\sqrt{\log n}/2$ many new births of cycles of size at most $2K$ during $[0,S]$. Thus, under $A_0 \cap A_1$, the cycle counts of those cycles never exceed $\sqrt{\log n}$. We estimate the probability of $A_1$. At any given $s$, we bound the total number of pre-cycles that can possibly give us one or more cycles (simultaneously) of size at most $2K$.

Given any word, it follows from \eqref{eq:rateborn} and Lemma \ref{lem:precyclecount} that the expected total number of cycles of word $w$ created during time $[0,S]$ is at most $2Sb(w)/h(w)$. For a word of length at most $2K$, we can bound this mean by $2KS$. There are at most $(2d)^{2K+1}$ many words of length at most $2K$. Thus the expected number of cycles with such words born during $[0,S]$ is bounded above by $2KS(2d)^{2K+1}$. Hence, by Markov's inequality, one can choose $n$ large enough such that the probability of more than $\sqrt{\log n}/2$ births is at most $\epsilon/4$. By a union bound, 
\eq\label{eq:a0a1bnd}
P(A_0 \cap A_1) \ge 1 - 3\epsilon/4.  
\en
\medskip

Assume that $A_0\cap A_1$ holds. Thus $\tau_2 > S$. Consider $\tau_4$. Denote the state of the permutations at $(\tau_4-, \tau_4)$ by $\left( \pi_i-,\; i\in [d]  \right)$ and $\left( \pi_i, i\in [d]\right)$. Here $\tau_4-$ refers to left limit of the chain at $\tau_4$. 
Now observe the change in reversed time. Let $\sigma$ be the transposition that occurs at $\tau_4$. By taking inverses, each $\pi_i-$ can be obtained from $\pi_i$ by left multiplication by the same transposition $\sigma$. New cycles are formed by multiplying pre-cycles with $\sigma$. Therefore we see that two cycles can appear simultaneously at $\tau_4$ if and only if at $\tau_4-$ the vertices formed a cycle of size $\abs{w} + \abs{w'}$ and the transposition chooses two elements from its vertices. Since this cycle can be of size at most $2K$ and we have assumed $\tau_2>S$, the rate at which such a transposition occurs is bounded by
\eq\label{eq:tau4rate}
\frac{1}{n}\left(\sum_{w\in \Ww'_{2K}} C_w\right)^2 \le O\left(\frac{\log n}{n}\right).
\en 
where the constant in the order term only depends on $K$.

Now consider the rate which new short cycles share vertices with existing short cycles. This happens if there is a short pre-cycle that shares a vertex with an existing cycle. The number of existing short cycles is bounded by $\sqrt{\log n}$. Each has at most $K$ vertices. It has been shown in Lemma \ref{lem:precyclecount} that each vertex can lead at most $K^2$ pre-cycles with a given short word. Hence, the rate is bounded above by $c_1 {\log n}/{n}$, where $c_1$ is a constant depending on $d$ and $K$. 

Hence, during $[0,S]$, the random variable $\tau_1 \wedge \tau_4$ stochastically dominates an exponential random variable of rate $c_2\log n/n$, where $c_2$ is some constant depending only on $d$ and $K$. Thus one can take $n$ large enough to guarantee that 
\[
\P\left( \tilde\tau > S  \right) \ge 1 - \frac{7\epsilon}{8}. 
\]

Now consider $\tau_5$. Until $\tilde\tau$ no two short cycles share a vertex. Hence $\tau_5$ happens only if the transposition selects both vertices counted in short cycles and the cycles merge. The probability of such a transposition is bounded by $\log n/n$. Thus, comparing with an exponential with rate $\log n/n$, we see that one can take $n$ large enough to guarantee that 
\[
\P\left(  \tilde\tau \wedge \tau_5 > S  \right) \ge 1 - \frac{9\epsilon}{10}. 
\]  

The case of $\tau_6$ is similar. The transposition has to involve a vertex of a small cycle and another of a small pre-cycle. In any case, the rate of such transpositions is again of the order of $\log n/n$. Thus, one can take $n$ large enough to guarantee 
\[
\P(\stime > S)=\P\left(  \tilde\tau \wedge \tau_5\wedge \tau_6  > S  \right) \ge 1 - \epsilon.
\]
Since $\epsilon$ is arbitrary, this proves our claim. 
\end{proof}

The next subsection proves Proposition \ref{prop:mainconvg} using Proposition \ref{lem:jumpone} by proving that the projection of the backward chinese restaurant process on the space of cycles converge to the halving chain defined in Definition \ref{defn:mconwords}. A one dimensional version of this result is proved in the proof of Theorem 16 in \cite{JP14} and some of the basic arguments appearing there are used in the following proof.  
\subsection{Convergence to halving chains}\label{sec:halvingconv} 

Fix time $s>0$. Consider the collection of cycles present at any time during $[0,s]$ at dimension $T$. We consider their evolution backward in dimension. We claim that these evolve as independent halving chains. Notice that we do not mention their lifetime. This is because, by construction, if the same cycle exists at two time points, their backward evolution is identical. 
Let us outline the argument  Consider the graph valued process backward in dimension 
\[
\bG{v}{u}=G(T-u,v), \quad v\in [0,s], \; u \ge 0. 
\] 
Fix $K\in \NN$. Choose a large positive integer $L\gg K$ and ignore all of $\bG{v}{0}=G(T,v)$, $v\in [0,s]$, except for the subgraph consisting of cycles of size $L$ and smaller. Call this graph $\bGraph{v}{0}$. Consider now the evolution, backward in dimension, of the graphs $\bGraph{v}{\cdot}$, $0\le v\le s$. 

Define an event $\gevent$ which states that no two of the finitely many cycles of size at most $L$ that are created in dimension $T$ and during time $[0,s]$ share a vertex. Then, our proof goes by shown (i) $\gevent$ holds asymptotically almost surely, and (ii) under $\gevent$ the analysis of the backward processes $\bGraph{\cdot}{\cdot}$ is trivial.  This suffices then by a Lemma \ref{lem:existence} type argument which says large cycles do not quickly shrink to small cycles in the halving chain.

Claim (i) is almost Lemma \ref{lem:stimeinf}. The only difference is that it is possible for two cycles existing at disjoint intervals of time to have a common vertex. But, an easy extension to the same argument covers this case. 

Let us now explain (ii).  We ignore the vertex labels and consider every vertex in the graph to have an exponential one clocks attached to it. Backward in dimension, whenever the clock of a vertex rings, we remove that vertex from every permutation. By our construction, the same vertex (or, more precisely, its image) is removed simultaneously from every point in time. The remarkable fact is that, under $\gevent$, each cycle evolves independently as a halving chain for every $T$.  

Let $R$ be the rectangle $[-T_0, 0] \times [0, S_0] \times \Ww'_K$. Now,
\[
\begin{split}
P&\left(  C_w(T+t,s) = N^{(T,L)}_w(t,s),\; \forall\; (t,s,w)\in R \right)\\
\ge& P\left(  C_w(T+t,s) = N^{(T,L)}_w(t,s),\; \forall\; (t,s,w)\in R \mid \gevent\right) P(\gevent).
\end{split}
\]
Hence we will be done once we show that, for any $\delta >0$ there exists large enough $L$ such that, 
\eq\label{eq:atleastdel}
\limsup_{T\rightarrow \infty} P\left(  C_w(T+t,s) = N^{(T,L)}_w(t,s),\; \forall\; (t,s,w)\in R \mid \gevent\right) > 1-\delta.
\en
The argument is similar to the proof of Lemma \ref{lem:existence} and hence we use similar notation. Let $E_T(L)$ be the event that some cycle of length $l>L$ that exists at dimension $T$ and anywhere in time $[0,S_0]$ at least $l-K$ of its shrinking vertices are deleted by dimension $T-T_0$.

For $l > L$, word $w\in \Ww_l/D_{2l}$, and $I \subseteq [l]$, such that $\abs{I} =l-K$, $w_i=w_{i+1}$ for $i\in I$, let $F(w,I)$ denote the event that a cycle of word $w$ that exists at some point in $\{T\}\times[0,S_0]$ shrinks all the vertices in $I$ by dimension $T-T_0$. 
Thus exactly as \eqref{eq:firstunion1}
\eq\label{eq:firstunion}
P\left( E_T(L)  \right) \le \sum_{w, I} P\left[  F(w,I)  \right].
\en
And then similarly as in \eqref{eq:secondunion1} we have
\eq\label{eq:secondunion}
P\left[  F(w,I)  \right]\le \left( 1 - e^{-T_0} \right)^{l-K} E\left( G_w \right). 
\en 
where  $G_w$ denotes the number of cycles of word $w$ that ever exists during $\{T\}\times[0,S_0]$. 

The following lemma is needed to finish the current argument. 

\begin{lemma}\label{lem:howmnycyc}
For any $w$ and any $T>0$, we must have $E(G_w) \le 2 S_0 \abs{w} + 1$. 
\end{lemma}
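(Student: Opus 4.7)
The plan is to split $G_w = C_w(T,0) + B_w$, where $B_w$ counts the number of cycles of word $w$ that are newly created during $(0,S_0]$ (each counted at its moment of birth), and to bound the two pieces separately by $1$ and $2|w|S_0$.

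For the contribution $E[C_w(T,0)]$, I would condition on $M_T=n$: the $d$ permutations at dimension $T$ are then i.i.d.\ uniform on $S_n$, so I can compute the mean by enumerating closed edge-labeled walks. Writing $k=|w|$ and letting $c_i(w)$ denote the total number of occurrences of $\pi_i^{\pm 1}$ in $w$, the orbit of $w$ under $D_{2k}$ has $2k/h(w)$ elements, there are $(n)_k$ distinct vertex sequences $(s_0,\ldots,s_{k-1})$, each realizing a given word in the orbit with probability $\prod_i 1/(n)_{c_i(w)}$, and each cycle contributes $2k$ walks. Putting this together yields
\begin{equation*}
E[C_w(T,0) \mid M_T = n] = \frac{(n)_k}{h(w)\prod_i (n)_{c_i(w)}}.
\end{equation*}
The desired bound $\le 1/h(w) \le 1$ then follows from the elementary inequality $(n)_k \le \prod_i (n)_{c_i(w)}$, which I would prove by sorting the factors in the denominator in decreasing order and observing that the $i$-th largest is at least $n-(i-1)$: indeed, either some $c_l \ge i$ already furnishes $i$ factors $\ge n-(i-1)$, or all $c_l < i$ and then $\sum_l \min(c_l,i) = \sum_l c_l = k \ge i$ still gives at least $i$ such factors. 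Unconditioning on $M_T$ gives $E[C_w(T,0)] \le 1$.

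For $E[B_w]$, I would appeal to equation \eqref{eq:rateborn}, which identifies the instantaneous rate at which cycles of word $w$ are created as $\lambda_w^+(s) = (1/n)\sum_{u\sim w}|S_u|$, and combine it with the upper-bound half of Lemma \ref{lem:precyclecount}, namely $\sum_{u\sim w}|S_u| \le 2nb(w)/h(w)$. Crucially, this upper bound is a purely combinatorial statement that needs no smallness assumption on $|w|$ or on the current configuration of the graph (unlike the lower bound there, which carries an $A_k$ correction); so $\lambda_w^+(s) \le 2b(w)/h(w) \le 2|w|$ holds pointwise in state and time. Integrating over $[0,S_0]$ gives $E[B_w] \le 2|w|S_0$, and adding the two pieces yields $E[G_w] \le 2|w|S_0 + 1$ as claimed.

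The only nontrivial step in this plan is the elementary majorization inequality $(n)_k \le \prod_i (n)_{c_i(w)}$ underlying the time-zero bound; everything else is a direct application of \eqref{eq:rateborn} and Lemma \ref{lem:precyclecount}, which were already established earlier in the paper.
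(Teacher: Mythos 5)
Your proof takes essentially the same approach as the paper: the same decomposition of $G_w$ into cycles present at $(T,0)$ (bounded by $1$) and cycles born during $(0,S_0]$ (bounded by integrating the birth rate $\lambda_w^+\le 2b(w)/h(w)\le 2|w|$, which indeed requires only the upper-bound half of Lemma~\ref{lem:precyclecount} and therefore holds for words of arbitrary length). The only difference is that you supply a self-contained elementary derivation of $\E[C_w(T,0)]\le 1$ via the falling-factorial formula $\E[C_w(T,0)\mid M_T=n]=(n)_k/\bigl(h(w)\prod_i(n)_{c_i(w)}\bigr)$ and the majorization inequality $(n)_k\le\prod_i(n)_{c_i(w)}$, whereas the paper simply cites \cite[p.~19]{JP14} for this bound; your computation is correct.
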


Keeping the proof of this lemma for last, let us continue with the previous argument. Using Lemma \ref{lem:howmnycyc} as in \eqref{sum1} we have
\eq\label{eq:thirdunion}
P\left[  E_T(L) \right] \le  
{(2d)}^{K}\sum_{l=L+1}^\infty \left( 2S_0 l + 1  \right) l^{K} \left(  1- e^{-T_0} \right)^{l-K}.  
\en
The right side of the above bound is summable. Hence, one can find $L$ large enough such that it is smaller than any $\delta >0$ proving \eqref{eq:atleastdel}. 

\begin{proof}[Proof of Lemma \ref{lem:howmnycyc}] Given a word $w$ the number of cycles to ever exist during $\{T\} \times [0, S_0]$ consists of two kinds: (i) those that exist at $(T,0)$ and (ii) those that are created during $\{T\} \times (0,S_0]$. The expected value of the first kind is at most one as argued in the second displayed equation on \cite[p. 19]{JP14}. So, we focus on the second kind. 

Consider the counting in \eqref{eq:notrans} for a word $w$ that need not be short. In any case, \eqref{eq:rateborn} continues to hold: 
\[
\lim_{h\rightarrow 0} h^{-1}\E\left[ R_w(s+h) - R_w(s)\mid \mcal{G}_s, \; R_w(s)=x  \right] \le \frac{2b(w)}{h(w)}. 
\]
Therefore, for any $w$, during time $(0,S_0]$ the expected total number of $w$ cycles created during that time is dominated by 
\[
\frac{2b(w)}{h(w)} S_0 \le 2\abs{w}S_0.
\]
This completes the argument. 
\end{proof}

\subsection{Weak convergence of cycle counts} Finally we prove Theorem \ref{prop:weakconv}.

\begin{proof}[Proof of Theorem \ref{prop:weakconv}]
Consider the set-up of Proposition \ref{prop:mainconvg}, in particular, the rectangle $R=[-T_0, 0] \times [0, S_0] \times \Ww'_K$. We have four different random fields of interest.
\begin{enumerate}[(i)]
\item The field of cycles: $\left( C_w(T+t, s),\; (t,s,w)\in R    \right)$.
\item The field $\left(  N_w^{(T,L)}(t,s),\; (t,s,w)\in R  \right)$ with non-limiting initial conditions $\chi_L(T)$.
\item We now define the field $\left( N_w^{L}(t,s),\; (t,s,w)\in R   \right)$ of limiting cycle counts with the limiting initial condition $\chi_L$. 
\item And the actual limiting field $\left( N_w(t,s),\; (t,s,w)\in R   \right)$ defined in Definition \ref{defn:limitpoisson}. 
\end{enumerate}

Proposition \ref{prop:mainconvg} and Slutsky's theorem implies that any weak limit of fields (i) and (ii) must be the same. We also know from Proposition \ref{prop:dimmarginal} that the field (iii) converges almost surely to (iv) as $L$ tends to infinity. Hence, it suffices to show that, as $T$ tends to infinity, the field (ii) converges weakly to (iii). 

We know that $\chi_L(T)$ converges weakly to $\chi_L$ as random Radon measures on the space $[0, \infty) \times (0, \infty) \times \Ww'_L$. By Skorokhod's theorem, one can construct a probability space and copies of $\chi_L(T)$ and $\chi_L$ such that this convergence holds almost surely. We will restrict ourselves to this constructed space and construct couplings. 

On this space one can find $T'$ large enough such that, for all $T \ge T'$, the total mass of $\chi_L(T)$ on every $w\in \Ww'_L$ is the same as that of $\chi_L$. This is because, almost surely, there are only finitely many of them. Assume, henceforth, $T\ge T'$.

Informally, the same number of cycles of each word get created under $\chi_L(T)$ and $\chi_L$. For every word $w$ enumerate these atoms according to the times of their births. Then, take an identical set of independent halving chains starting from $w$ and attach to the two sets of atoms in the same order. Thus, we have a collection of independent Markov chains whose birth times and lifetimes are \textit{slightly off}, however, the correct alignment renders their paths identical. This produces a coupling of field (ii) and (iii).

However, as $T$ goes to infinity, the finite vector of birth times and lifetimes of every cycle in $\chi_L(T)$ is a continuous function of the point process, converges almost surely to that of $\chi_L$. By construction of our coupling this causes the fields (ii) to converge to (iii) pointwise in the rectangle $R$. This implies converges in the space $D_2$ trivially (see \cite[eqn. (2.3), (2.4)]{neuhaus71}). 

The claimed weak convergence now follows. 
\end{proof}

\section{Linear eigenvalue statistics}
 Let us recall some of the basic facts established in \cite[Section 3, 5]{DJPP} and \cite[Section 5]{JP14} that connect linear eigenvalue statistics with cycle counts. Recall $G_n$ is a random regular graph of degree $2d$ on $n$ vertices. A closed non-backtracking walk is a walk
  that begins and ends at the same vertex, and that never follows
  an edge and immediately follows that same edge backwards.
   If the last step of a closed non-backtracking walk is anything other
  than the reverse of the first step, we say that the walk is \emph{cyclically
  non-backtracking} (CNBW). 
  Cyclically non-backtracking walks on $G_n$
  are exactly the closed non-backtracking walks whose words
  are cyclically reduced.
  Let $\CNBW[n]{k}$ denote the number of closed cyclically
  non-backtracking walks of length $k$ on $G_n$.

 Let $\{T_n(x)\}_{n \in \mathbb{N}}$ be the Chebyshev polynomials of the first kind 
on the interval $[-1,1]$.
We define a set of polynomials
\begin{align*}
  \Gamma_0(x) & = 1~, \\
  \Gamma_{2k}(x) &= 2T_{2k}(x)+ \frac{2d-2}{(2d-1)^k}~,~~\forall ~k \geq 1~, \\
  \Gamma_{2k+1}(x) &= 2T_{2k+1}(x)~, ~~\forall ~k \geq 0~.
\end{align*}

  Let $A_n$ be the adjacency matrix of $G_n$, and
  let $\lambda_1\ge\cdots\ge\lambda_n$ be the eigenvalues
  of $(2d-1)^{-1/2}A_n/2$.  
  Then
  \begin{align}
    \sum_{i=1}^n\Gamma_k(\lambda_i)&=(2d-1)^{-k/2}\CNBW[n]{k}.
    \label{eq:eigencnbw}
  \end{align}

Now, for any cycle in $G_n$ of length $j|k$, we obtain $2j$ non-backtracking walks
  of length $k$ by choosing a starting point and direction and then
  walking around the cycle repeatedly.
  It follows from \cite[Corollary~18]{DJPP}, that for fixed $d$ and $r$, all cyclically non-backtracking walks of length $r$ or less
  have this form with high probability.
  Thus the random vectors $\big(\CNBW[n]{k},\,1\leq k\leq r\big)$
  and $\big(\sum_{j\divides k}2j\Cy[n]{j},\,1\leq k\leq r\big)$ have the same
  limiting distribution, and the problem of finding the limiting distributions
  of polynomial linear eigenvalue statistics is reduced to finding limiting
  distributions of cycle counts. \\
  
  Most of this section is devoted to showing that a similar statement holds for 
  the entire two parameter field $G(\cdot, \cdot)$. Let $\mathrm{CNBW}_{k}(\cdot,\cdot)$ denote the corresponding field of cyclically non-backtracking walks of length $k$.
As in \cite{JP14} call a CNBW \textit{bad} if it is anything other than a repeated walk around a cycle. Formally we prove the following proposition.

\begin{prop}\label{prop:nobadwalk}
Fix any $T_0, S_0, \epsilon >0$ and $K\in \NN$. Consider $T$ for which the rectangle $R_0:=[T-T_0, T] \times [0, S_0]$ is a subset of $[0, \infty)^2$. Then, for all large enough $T$, the probability that there is any bad $\mathrm{CNBW}$ of length at most $K$ in the field of graphs $G(t,s)$, $(t,s)\in R_0$, is at most $\epsilon$. 
\end{prop}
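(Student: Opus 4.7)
The plan is to run a first-moment argument on the appearances of ``excess subgraphs'' across the rectangle $R_0$. Following \cite[Corollary~18]{DJPP}, a bad CNBW of length $\leq K$ is necessarily supported on a connected subgraph $H$ with $|E(H)|\leq K$ and some vertex of degree at least three in $H$; call any such $H$ an \emph{excess subgraph of size $\leq K$}. The first-moment estimate underlying \cite[Corollary~18]{DJPP} gives that the expected number of excess subgraphs of size $\leq K$ in the random regular graph $G(n,2d)$ is $O(1/n)$. Since $M_T\to\infty$ in probability as $T\to\infty$, it therefore suffices to prove
\[
\mathbb{P}\bigl( \exists\, (t,s)\in R_0 :\ G(t,s)\ \text{contains an excess subgraph of size}\leq K \bigr) \longrightarrow 0.
\]

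I would decompose this event by the \emph{first appearance} of an excess subgraph in the field and bound three contributions separately. \emph{(a) Initial configuration at $(T,0)$:} the probability is $O(1/M_T)$ by the static first-moment bound. \emph{(b) Creation by a transposition during $s\in[0,S_0]$:} a transposition $\tau_{ij}$ alters only edges of $G(T,s)$ incident to $i$ or $j$, so any newly appearing excess subgraph must contain $i$ or $j$; by exchangeability of the stationary random regular graph law the expected number of excess subgraphs of size $\leq K$ containing a uniformly random vertex is $O(K/M_T^2)$, and multiplying by the transposition rate $M_T$ and the interval length $S_0$ gives an expected number of time-creation events of $O(S_0/M_T)=o(1)$. \emph{(c) Creation by a CRP removal during $t\in[T-T_0,T]$:} each removal contracts $d$ pairs of edges into $d$ new edges, and any new excess subgraph in the shrunken graph must involve one of these new edges; exchangeability again gives $O(K/M_t^2)$ expected excess subgraphs incident to a given vertex (and hence a given edge) in the post-removal graph, so the per-removal contribution is $O(dK/M_t^2)=O(1/M_t^2)$, and summing over the $\Theta(M_T)$ removals in $[T-T_0,T]$ yields a total of $O(1/M_{T-T_0})=o(1)$. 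Combining (a)--(c) by Markov's inequality produces the required $o(1)$ bound.

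The main obstacle is justifying exchangeability cleanly for the dimension step: the new edge produced by a CRP removal depends on the vertex being removed, which in turn depends on $\sigma_s$ and on the CRP times, so one must exploit the joint exchangeability of $(G(T,s),\sigma_s)$ together with the marginal r.r.g.\ law of $G(t,s)$ to conclude that the new edge is, up to negligible error, a uniformly random edge in the stationary distribution. A secondary bookkeeping point is that a single transposition at time $s$ simultaneously modifies $G(t,s)$ for every $t\in[T-T_0,T]$ via the coupling; however, because the first-moment bound in (b) counts only first appearances of excess subgraphs and each such first appearance is attributable to a unique transposition or removal event, this propagation does not cause double counting. Modulo these points, the argument is a natural two-parameter extension of the one-dimensional analysis in the proof of \cite[Theorem~16]{JP14}.
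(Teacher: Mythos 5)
Your proposal takes a genuinely different route from the paper, and I think it has a real gap. The paper does \emph{not} run a direct first-moment argument over elementary events in the two-parameter family. Instead it exploits a monotonicity property: when one grows the graph in dimension (forward CRP), the \emph{characteristic} $e-v$ of the category of any trail is non-decreasing. So any bad CNBW of length $\le K$ at $(t,s)$ with $t<T$ must descend, by deleting vertices, from a bad CNBW / double-lollipop at $(T,s)$, and if that ancestor is of length at most $L$ then $G(T,s)$ already fails $(L,L)$ tangle-freeness. This reduces the entire two-parameter statement to (i) a one-parameter-in-$s$ tangle-free statement at the front $T$ (Lemma~\ref{lem:tanglefreedimt}, proved by the pre-lollipop count as in Lemma~\ref{lem:stimeinf}), plus (ii) a halving-chain-style tail bound showing that with high probability no ancestor of length $>L$ at dimension $T$ shrinks to length $\le K$ by dimension $T-T_0$ (Lemma~\ref{lem:badcnbwshrink}, mirroring the computation in Lemma~\ref{lem:existence}). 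That reduction is the heart of the argument, and it is absent from your plan.

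The concrete problem in your bookkeeping is in (c), and it also infects (b). A CRP removal is not a single elementary event for the field: the removal order is $\sigma_s(n),\sigma_s(n-1),\ldots$, so the vertex removed at a given CRP time depends on $s$. Your per-removal bound $O(1/M_t^2)$ is correct \emph{at a fixed $s$}, and summing over the $\Theta(M_T)$ removals gives $O(1/M_{T-T_0})$ for one $s$; but you need the statement for all $s\in[0,S_0]$, and there are $\Theta(M_T S_0)$ distinct values of $\sigma_s$. The naive union bound then gives a total of order $O(1)$, not $o(1)$. One could try to save this by arguing that each transposition changes $\sigma_s$ at only two labels, so only $O(1)$ of the $\Theta(M_T)$ removals change across a transposition; but then you must simultaneously account for the fact that the transposition also changes the edges of $G(T,\cdot)$ at those two labels, which propagates to \emph{all} lower dimensions at once and can manufacture an excess subgraph at some $(t,s)$, $t<T$, without ever producing one at $(T,s)$ or at $(t,0)$. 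Your remark on ``no double counting'' addresses duplication, but not this under-counting: your (b) only counts excess subgraphs touching $\{i,j\}$ \emph{in $G(T,s)$}, and the first-appearance map from interior points $(t,s)$ with $t<T$ and $s>0$ to elementary events is never actually constructed. The paper sidesteps all of this precisely because the characteristic-monotonicity observation collapses the dimension axis into the shrinking estimate, leaving only a one-dimensional-in-time problem at the front.

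Two smaller remarks. First, your characterization of the support of a bad CNBW (connected, $\le K$ edges, a vertex of degree $\ge 3$) is correct and is equivalent to the paper's ``characteristic $\ge 1$'' condition, and the static $O(1/n)$ estimate you cite is indeed the one from \cite[Lemma~14, Proposition~15]{DJPP} that the paper also uses. Second, the paper needs the more refined dichotomy of Lemma~\ref{lem:tanglecnbw} (either a bad CNBW of characteristic $\ge 2$, or a genuine double-lollipop), because it is the double-lollipop structure whose backward-in-dimension growth can be tracked via the halving-chain tail estimate; a bare ``excess subgraph'' count does not by itself give you the quantitative control over how far a bad configuration at $(t,s)$ must have shrunk from dimension $T$.
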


To prove the above proposition we recall the definition of categories of trails from \cite[Section 3]{DJPP}. Recall the definition of trails from Definition \ref{defn:trailsetal}. In this section we will use an expanded definition that allows vertices to repeat. If we start at a vertex of a CNBW and walk around it, we get a closed trail with possible repeated vertices. Given such a trail its category is a directed, edge-labeled graph depicting the overlap of trails. Essentially one gets a category from a trail by gluing the vertices with the same label.   
See Figure 1 in \cite{DJPP} or Figure 7 in \cite{LP} for the general definition of categories of a list of trails. 
%

\begin{figure}[t]
\centerline{
        \xymatrix{
          1 \ar[r]^{\pi_1} & 2 \ar[r]^{\pi^{-1}_2}& 3 \ar[r]^{\pi_3}& 2 \ar[r]^{\pi_2} & 4 \ar[r]^{\pi_1^{-1}} & 5 \ar[r]^{\pi_3} & 1
        }
}
\vspace{1cm}

\centerline{        
        \xymatrix@=6em{
       \{s_2\} \ar @/_1.5pc/ [r]_{\pi_2} \ar @/^1.5pc/ [r]^{\pi_3} & \{ s_1, s_3\} \ar[r]^{\pi_2} & \{s_4\} \\
       & \{s_0\} \ar[u]^{\pi_1} & \{s_5\} \ar[l]^{\pi_3} \ar[u]_{\pi_1}
         }
}
\caption{A bad CNBW trail and its category graph}
\label{fig:cnbwcateg}
\end{figure}
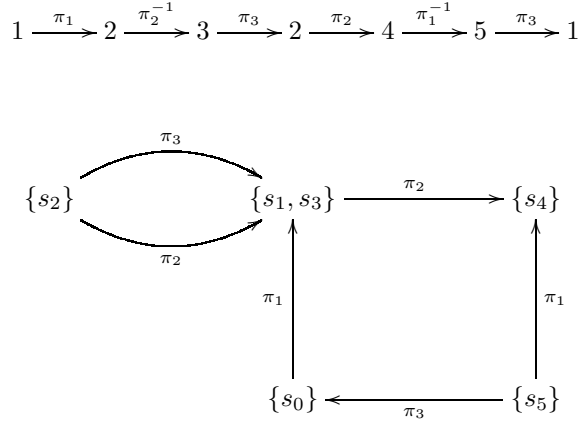

This new graph is called the category of the trail. See Figure \ref{fig:cnbwcateg} for an example. One can now inquire how many trails of a certain category appears in a given graph. The crucial property of categories is the difference between the number of edges and vertices. We introduce the new definition. 

\begin{defn}\label{defn:categcharacter}
For a category $\Gamma$ its characteristic will refer to the quantity $e-v$, where $e$ is the number of edges of $\Gamma$ and $v$ is the number of vertices. 
\end{defn}

The characteristic of a category determines how likely it is to appear in $G(n,2d)$. We will refer to the following bound from \cite[Lemma 14]{DJPP}. Let $X_\Gamma^{(n)}$ be the number of trails of category $\Gamma$ that appear in $G(n,2d)$ for $n > e$. Then 
\eq\label{eq:expectcateg}
\E\left( X_\Gamma^{(n)}  \right) \le \frac{1}{[n-k]_\gamma},
\en
where $\gamma$ is the characteristic of $\Gamma$ and $[x]_j$ is the falling factorial $[x]_j=x(x-1)\cdots(x-j+1)$.

\begin{lemma}\label{lem:nobadcharacter}
For any $S_0>0$ and any $\epsilon >0$, for all large enough $T$, the probability that there exists a bad $\mathrm{CNBW}$ of length at most $K$ in some $G(T,s)$, $0\le s\le S_0$, whose category has characteristic $2$ or higher is at most $\epsilon$. 
\end{lemma}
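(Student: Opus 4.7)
The plan is to combine the expectation bound from \eqref{eq:expectcateg} with the observation that $G(T, \cdot)$ is piecewise constant in $s$, exploiting stationarity of the random transposition chain at each transposition epoch. I would condition on $M_T = n$ throughout; since $M_T \to \infty$ in probability as $T \to \infty$, it suffices to establish a bound that tends to zero as $n \to \infty$.

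First I would fix a single time $s \in [0, S_0]$ and estimate the expected number of bad CNBWs of length at most $K$ whose category has characteristic at least $2$ at time $s$. By invariance of the uniform measure under the random transposition chain, $\{\pi_i^{(n)}(s) : i \in [d]\}$ is again a family of $d$ independent uniformly random permutations of $[n]$. Hence \eqref{eq:expectcateg} applies at time $s$ and gives, for any such category $\Gamma$ arising from a trail of length $k \le K$ with characteristic $\gamma \ge 2$,
\[
\E\bigl[ X_\Gamma^{(n)}(s) \bigr] \leq \frac{1}{[n-k]_\gamma} = O(n^{-2}).
\]
Summing over the finite set of such categories (whose cardinality depends only on $K$ and $d$, by the standard enumeration in \cite{DJPP}), and letting $B(s)$ denote the total number of bad CNBWs in question at time $s$, I obtain $\E[B(s)] \le C_{K,d}/n^{2}$ uniformly in $s$.

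To upgrade this single-time estimate to a uniform bound over $s \in [0, S_0]$, I would use that the total rate of transpositions on $[n]$ is $n$, so the number $M$ of transpositions in $[0, S_0]$ is Poisson with mean $nS_0$, and $G(T, \cdot)$ is constant on each interval between consecutive transposition times $0 = t_0 < t_1 < \cdots < t_M$. Therefore
\[
\sup_{s \in [0, S_0]} B(s) = \max_{0 \le i \le M} B(t_i),
\]
and by Markov's inequality together with marginal stationarity at each $t_i$,
\[
\P\!\left( \sup_{s \in [0, S_0]} B(s) \geq 1 \right) \leq \E\!\left[ \sum_{i=0}^{M} B(t_i) \right] \leq (1 + \E M)\,\frac{C_{K,d}}{n^2} = O\!\left(\frac{S_0}{n}\right),
\]
which vanishes as $n \to \infty$, proving the lemma after un-conditioning on $M_T$. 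The only slightly delicate point is justifying the identity $\E[\sum_i B(t_i)] = (1 + \E M)\,\E[B(0)]$: this follows by conditioning on $M$ and the successive transposition choices, since the marginal law of the permutation configuration at each $t_i$ remains the uniform product measure independently of the history, so the Wald-type identity applies. No step presents a serious obstacle; the main content is simply extracting, via stationarity, a pointwise-in-$s$ estimate from the purely static bound \eqref{eq:expectcateg} and then paying a factor of $nS_0$ for the number of intermediate states.
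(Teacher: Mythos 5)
Your argument is correct and follows essentially the same route as the paper: bound the expected number of bad CNBWs of characteristic $\ge 2$ at a single time by $O(n^{-2})$ (the paper cites \cite[Proposition 15]{DJPP}, you invoke \eqref{eq:expectcateg} directly, which amounts to the same computation), observe that only a Poisson$(nS_0)$ number of transposition epochs generate new graphs, and union/Markov bound to get $O(1/n)$. Your Wald-style formalization of the final step (using that each $\pi(t_i)$ is marginally uniform and that the Poisson clock is independent of the permutation trajectory) is a clean way to make precise the ``union bound over $O(n)$ graphs'' that the paper leaves informal.
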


\begin{proof} For a given graph $G$ with law $G(n,2d)$, the probability that it has a bad CNBW of characteristic $2$ or higher is $O(n^{-2})$. This follows from \cite[Proposition 15]{DJPP} when we start the summation in the statement from $i=2$ (instead of $i=1$). The constant in the big-$O$ depends only on $d$ and $K$.   

Suppose at dimension $T$ the graphs have $n$ vertices. Given $S_0$, there are a Poisson with mean $nS_0$ many transpositions that occur during time $[0, S_0]$. Thus with probability at least $1- \frac{\e}{2},$  $O(n)$ many distinct graphs that ever exist during time $[0, S_0]$. By a union bound, the probability that any of them will have a bad CNBW of characteristic $2$ or higher is $O(1/n)$. 
We skip the easy details. 
\end{proof}

\begin{defn}\label{defn:tangle}
For any pair $(l,j)\in \NN^2$ we say that a graph is $(l,j)$ tangle free if any two cycles of length at most $l$ are at a graph distance at least $j$ from one another. 
\end{defn}

\begin{rmk}
The concept of \textit{tangle-free-ness} has been given other similar but slightly different definitions. See, for example, \cite{friedmanalon}. Our definition is relevant only to the following argument. 
\end{rmk}

\begin{lemma}\label{lem:tanglecnbw}
If a graph is $(l,l)$ tangle free then it has no bad CNBW of length $l$ or less.
\end{lemma}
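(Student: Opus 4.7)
I propose proving the contrapositive. Suppose $G$ contains a bad CNBW $w$ of length $k \leq l$; the plan is to extract two distinct simple cycles of length $\leq l$ in $G$ at graph distance $< l$, contradicting Definition \ref{defn:tangle}. Let $H \subseteq G$ be the subgraph of edges traversed by $w$, with the induced vertex set. Since $w$ is a single walk, $H$ is connected, and since $w$ is a closed non-backtracking walk of positive length, $H$ contains at least one cycle.

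The key structural claim is that $H$ contains at least two distinct simple cycles. First I would show every vertex of $H$ has $H$-degree $\geq 2$: if some $v$ had $H$-degree one, its unique $H$-incident edge would be the only edge available to the walk each time the walk arrived at or left $v$, forcing a backtrack. Next I would consider the cycle rank $|E(H)| - |V(H)| + 1$, which is at least $1$ since $H$ contains a cycle. If the rank equaled exactly $1$, then $|E(H)| = |V(H)|$, and combined with the minimum-degree-$\geq 2$ property and the handshake lemma, every vertex of $H$ would have degree exactly $2$, so $H$ would be a single simple cycle. But any non-backtracking closed walk on a simple cycle is just a repeated traversal of that cycle, contradicting the badness of $w$. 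Hence the cycle rank of $H$ is $\geq 2$, and $H$ contains two distinct simple cycles $C_1$ and $C_2$ (for instance, the fundamental cycles of two distinct non-tree edges with respect to any spanning tree of $H$).

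Each $C_i$ has length at most $|E(H)| \leq k \leq l$. Since $H$ is connected and contains a cycle, $|V(H)| \leq |E(H)| \leq l$, so the diameter of $H$ is at most $|V(H)| - 1 < l$; in particular $d_H(C_1, C_2) < l$, and since $H \subseteq G$, also $d_G(C_1, C_2) < l$. This produces two distinct cycles of length $\leq l$ in $G$ at $G$-distance $< l$, contradicting the $(l,l)$-tangle-free hypothesis. The only step that warrants careful handling is the degree-$\geq 2$ claim for vertices of $H$, particularly because the paper allows loops and multiple edges, so a loop at $v$ must be counted as contributing $2$ to the $H$-degree of $v$ (consistent with the usual convention); with that convention the non-backtracking argument goes through unchanged, and I do not anticipate any substantive obstacle elsewhere.
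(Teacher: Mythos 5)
Your proof is correct, and it takes a genuinely different route from the paper. The paper's proof is constructive and walk-based: it follows the trail of the bad CNBW to the first repeated vertex, identifies the first cycle encountered, ``erases'' that cycle from the trail (keeping one attachment vertex and its outgoing edge), and observes that the remainder is still a closed walk and hence contains another cycle. This yields two short cycles at distance at most $l-2$. Your proof instead passes to the edge-support subgraph $H$ of the walk and argues via cycle rank: the non-backtracking property forces minimum $H$-degree $\geq 2$, so if $|E(H)| - |V(H)| + 1 = 1$ then all degrees equal $2$ and $H$ is a single simple cycle, on which the only CNBWs are repeated traversals (not bad). Hence the cycle rank is at least $2$, giving two distinct fundamental cycles of length $\leq |V(H)| \leq |E(H)| \leq l$, at distance $\leq |V(H)| - 1 < l$. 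Your approach is cleaner in that it avoids the informal ``erasing'' surgery on the trail (which in the paper is stated a bit loosely and requires care to verify that the residual walk is still closed and contains a cycle after the splice), replacing it with a standard spanning-tree/cycle-rank argument; the trade-off is that the paper's direct construction gives a marginally sharper distance bound of $l-2$ versus your $l-1$, though both suffice for the $(l,l)$ tangle-free definition. Your remark about loops contributing $2$ to the degree is the right convention and is indeed the one needed for the handshake step; both proofs are equally informal about the degenerate loop/multi-edge cases, but neither causes trouble because a repeated traversal of a $1$- or $2$-cycle is by definition not bad.
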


\begin{proof} Consider a trail corresponding to a bad CNBW of length $l$ (or less) by starting from a vertex and walking along the walk. The lemma is clear if one imagines the category graph for this trail. 

This graph has to have a cycle, since otherwise it is not a cyclically non backtracking walk. However, the graph cannot be the cycle itself, since, otherwise the CNBW will be a repeated cycle which cannot be bad. Thus, consider the first cycle one encounters following the trail. That is, the first time we encounter a vertex encountered before. If one \textit{erases} this cycle from the graph, the remaining is non-empty. By erasing a cycle we mean the following procedure: consider the part of the trail that constitutes the cycle. Remove all vertices and edges, except the first vertex where the cycle starts and the edge that connects the final vertex (which is the same as the first) to the next vertex (which is not in the cycle).

Now, we claim that the remaining graph must also a have a cycle. This is evident because erasing the cycle did not destroy the property that the remaining walk is closed. The two cycles thus found have length at most $l$ and a graph distance at most $l-2$. If it exists in the graph, it contradicts the $(l,l)$ tangle free property. 
\end{proof}

The plan of the proof of Proposition \ref{prop:nobadwalk} is the following. Consider a bad CNBW of length at most $K$ that exists at some point in the rectangle $R_0$. Consider its trail and the corresponding category graph. Fix the time axis, and move forward in dimension. As a new vertex gets added to the graph, it is possible that it will have edges with the existing vertices. 
Suppose the new vertex is $k$. According to the rule of CRT, an existing edge 
\[
i \stackrel{\pi_i}{\longrightarrow} j \quad \text{or} \quad i \stackrel{\pi^{-1}_i}{\longrightarrow} j 
\]
can only change into 
\eq\label{eq:edgedouble}
i \stackrel{\pi_i}{\longrightarrow} k \stackrel{\pi_i}{\longrightarrow} j \quad \text{or} \quad i \stackrel{\pi^{-1}_i}{\longrightarrow} k \stackrel{\pi^{-1}_i}{\longrightarrow} j, \quad \text{respectively}.  
\en
Introduce the new vertex into the walk by including it as above by choosing one of the possible edges it belongs to. 

Modify the category graph by adding the new vertex and the new edge. The important observation is that the characteristic of the category of the trail is non-decreasing. This is because when a new vertex gets added to the existing trail a new edge also gets added, and therefore the trail remains a bad CNBW.  

Thus, for a bad CNBW $\omega$ of characteristic $\gamma$, there must exist another bad CNBW $\omega'$ of characteristic at least $\gamma$ at dimension $T$ such that the former can be obtained from the latter by deletion of vertices. This is a very similar situation to the arguments in Subsection \ref{sec:halvingconv} where we looked into larger cycles at dimension $T$ shrinking to smaller cycles at a lower dimension. In short, one can expect an $O(1)$ many vertices to be included in the bad CNBW as the dimension increases to $T$. Suppose the length of $\omega$ is $j$ and that of $\omega'$ is $l$. There are $i=l-j$ many new vertices added to the trail. The proof of Lemma \ref{lem:tanglecnbw} shows that there must be at least two cycles in $\omega$ that are at distance at most $j-2$ from each other.

Some of the $i$ many extra vertices will possibly increase the size of these cycles, some will possibly increase the graph distance. In any case, at dimension $T$ we will have two cycles of length at most $l$ that are at a graph distance at most $l-2$. Thus, the $(l,l)$ tangle free condition will be violated at some time point at dimension $T$.

Our proof has two parts. One, to show that for any fixed $l$, with high probability, the $(l,l)$ tangle free condition holds at dimension $T$ during time $[0, S_0]$. Two, to show that for any $k\in \NN$, there is a large $l\in \NN$, such that with high probability all bad CNBWs at a lower dimension of length at most $k$ have shrunk from bad CNBWs of length at most $l$ at dimension $T$, for all large enough $T$. 

\begin{lemma}\label{lem:tanglefreedimt}
Fix any $\epsilon>0$ and any $L\in \NN$. Then, for all large enough $T$, we have 
\[
\P\left(  G(T,s) \; \text{is $(L,L)$ tangle free for all $s\in[0, S_0]$ }   \right) \ge 1- \epsilon. 
\]
\end{lemma}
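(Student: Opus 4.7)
The plan mirrors the proof of Lemma \ref{lem:stimeinf}: a first-moment bound at $s=0$ for tangles in the permutation model, combined with a rate estimate for tangle creation under the random-transposition dynamics on $(0,S_0]$. Throughout write $n:=M_T$.

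First, I would show that at $s=0$ the graph $G(T,0)$ is $(L,L)$ tangle-free with probability at least $1-\epsilon/3$ for all large $T$. A tangle corresponds to a trail whose category graph $\Gamma$ (in the sense of \cite[Section 3]{DJPP}) contains two distinct cycles of length at most $L$ joined by a path of length at most $L-1$, including the degenerate cases of a shared vertex or shared sub-path. An Euler count shows $e(\Gamma)-v(\Gamma)\ge 1$ in every such case, so by \eqref{eq:expectcateg} and summation over the $O_{L,d}(1)$ isomorphism classes of admissible $\Gamma$ together with their finitely many edge labelings, the expected number of tangles in $G(n,2d)$ is $O(1/n)$, which is less than $\epsilon/3$ once $n$ is large.

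For the dynamics, let $\rho$ be the first $s>0$ at which a $(L,L)$ tangle exists. After enlarging the index $K$ in \eqref{eq:defnst} to $\max(K,L)$, Lemma \ref{lem:stimeinf} still gives $P(\stime\le S_0)\le\epsilon/3$ for $T$ large; on $\{\stime>S_0\}$ every $G(T,s)$, $s\in[0,S_0]$, contains at most $\sqrt{\log n}$ cycles of length at most $2L$, and the $L$-neighborhood $\mathcal{N}(s)$ of their vertex set has size $|\mathcal{N}(s)|\le(2d)^L\sqrt{\log n}=O(\sqrt{\log n})$. Recall that left multiplication of each $\pi_k$ by $\tau_{ij}$ rewires only the out-edges of the $2d$ vertices $\pi_k^{-1}(i)$ and $\pi_k^{-1}(j)$. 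A tangle can therefore be created at time $s$ only if the transposition falls into one of three families: (a) it completes a short pre-cycle whose vertex set meets $\mathcal{N}(s)$; (b) for some $k$, at least one of the new edges $\pi_k^{-1}(i)\to j$ or $\pi_k^{-1}(j)\to i$ has both endpoints in $\mathcal{N}(s)$, one near each of two distinct short cycles; or (c) it simultaneously completes two short pre-cycles within distance $L-1$ of one another. Using Lemma \ref{lem:precyclecount} to bound the number of short pre-cycles touching a given vertex, each family contains at most $C_{L,d}\log n$ ordered pairs $(i,j)$, and each specific transposition has rate $2/(n-1)$; so the instantaneous rate of tangle-producing transpositions is $O(\log n/n)$.

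Comparison with an independent exponential clock of rate $C_{L,d}\log n/n$ then gives $P(0<\rho\le S_0\wedge\stime)\le C_{L,d}S_0\log n/n$, which is at most $\epsilon/3$ for $T$ large. Combining with the two previous estimates yields the lemma. The main technical point is family (b): one must verify that every transposition reducing the graph distance between two pre-existing short cycles below $L$ rewires an edge whose new source and new target both lie in $\mathcal{N}(s)$, which follows because any new short path between these cycles must contain one of the two newly created edges. Families (a) and (c) are direct extensions of the arguments already used in Lemma \ref{lem:stimeinf}.
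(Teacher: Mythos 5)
Your high-level strategy is the same as the paper's: establish tangle-freeness at $s=0$ by a first-moment bound on the relevant category graphs, then bound the rate at which tangles can appear under the dynamics using the $O(\sqrt{\log n})$ short-cycle count from Lemma~\ref{lem:stimeinf}, and compare to an exponential clock of rate $O(\log n/n)$.  The organization, however, differs: the paper introduces \emph{double-lollipops} (two short cycles joined by a short path, i.e.\ the category graph witnessing a tangle) and \emph{pre-lollipops} (the pre-image of a double-lollipop under one transposition), shows that $O(\log n)$ pre-lollipops ever exist because their creation requires a transposition touching the $L$-neighborhood of short cycles, and then observes that a tangle-creating transposition must choose two vertices from this $O(\log n)$-sized set.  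You instead carry out a direct three-family classification of tangle-creating transpositions and bound each family's instantaneous rate.  Both routes give $O(\log n/n)$ and yield the lemma.

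The place where the paper's pre-lollipop framework is safer than your decomposition is exactly the point you flag: verifying that families (a), (b), (c) are exhaustive and that the relevant vertices lie in $\mathcal{N}(s)$ \emph{before} the transposition.  A single transposition rewires $2d$ directed edges simultaneously, so it can at once close a pre-cycle and shorten the path from that new cycle to a pre-existing one; in that mixed scenario the pre-cycle need not meet the old $\mathcal{N}(s)$ as required by family~(a), nor is there necessarily a single new edge with both endpoints in the old $\mathcal{N}(s)$ as required by family~(b).  Likewise, a new short path between two old cycles may traverse more than one new edge, and for the interior new edges the source/target need not be close to either cycle in the pre-transposition graph, so your one-new-edge justification for family~(b) does not immediately cover that case.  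These cases can certainly be patched (for instance by induction on the number of new edges used, or by observing that they force additional vertices from $\{i,j,\pi_k^{-1}(i),\pi_k^{-1}(j)\}$ to lie in $\mathcal{N}(s)$), but the paper avoids the bookkeeping entirely by defining the pre-lollipop to be whatever object maps to a double-lollipop under the transposition: by construction every tangle-creating transposition closes a pre-lollipop, and one only needs to bound how many pre-lollipops are ever born.
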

The above statement is essentially the same as modifying the definition of
 the stopping time $\tau_1$ in \eqref{eq:defnst} to be the time till which all cycles of length at most $L$ have disjoint $L-$ neighborhoods. The proof hence is essentially the same as the proof of Lemma \ref{lem:stimeinf}. 
We include the proof for clarity and to introduce some new structures, needed for the proof of the next lemma.  

  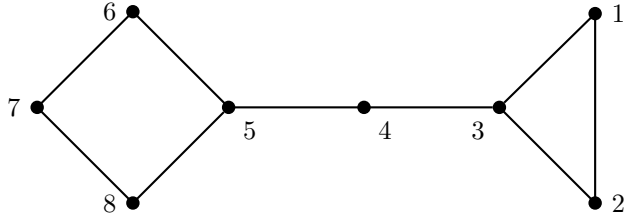
\begin{figure}[t]
      \begin{center}
        \begin{tikzpicture}[scale=1.8,vert/.style={circle,fill,inner sep=0,
              minimum size=0.15cm,draw},>=stealth]
            \draw[thick] (0,0) node[vert,label=below left:3](s0) {}
                  -- ++ (0.707cm, -0.707cm) node[vert,label=right:2](s1) {}
                  -- ++ (0,1.4) node[vert,label=right:1](s2) {} 
                  -- (s0) {}
                  -- ++(-1, 0 cm) node[vert,label=below right:4](s4) {}
                   -- ++(-1, 0 cm) node[vert,label=below right:5](s5) {}
                  -- ++(-0.707cm,0.707cm) node[vert,label=left:6](s6) {}
                  -- ++(-0.707cm,-0.707cm) node[vert,label=left:7](s7) {}
                  -- ++(0.707cm,-0.707cm) node[vert,label=left:8](s8) {}
                  --(s5);
        \end{tikzpicture}
      \end{center}
      \caption{The trail $1\rightarrow 2\rightarrow 3\rightarrow 4\rightarrow 5\rightarrow 6\rightarrow 7\rightarrow 8\rightarrow 5\rightarrow 4 \rightarrow 3\rightarrow2\rightarrow 1$ is a double lollipop. The edge labels are not shown.}
      \label{fig:dlolly}
    \end{figure} 

\begin{proof} Consider any three integers $i,j,k$, each less than $L$. Consider a category graph that connects two cycles of sizes $i$ and $j$ by a single path of length $k\ge 0$. We call this graph (or the trail) a \textit{double-lollipop} for obvious reason. See Figure \ref{fig:dlolly} for a case of $i=4,j=3,k=1$. 
Given a double-lollipop associate a word with it by choosing any hamiltonian path and traversing the edges along it.  
The importance of the double-lollipops come from the Lemmas \ref{lem:nobadcharacter} and \ref{lem:tanglecnbw}. A graph is not tangle-free if and only if either (i) two short cycles intersect at more than one vertex, or (ii) there are two short cycles joined by a short path (i.e., a double-lollipop). Case (i) is unlikely by Lemma \ref{lem:nobadcharacter}. It remains is to argue that case (ii) is also unlikely.   

It is known from \cite[Section 5]{DJPP} that, for $T$ large enough, the probability that $G(T,0)$ is not $(L,L)$ tangle free is at most $\epsilon/2$. So, we restrict attention to new double-lollipops that are ever born during $(0, S_0]$. The argument is similar to that of pre-cycle counts in Lemma \ref{lem:precyclecount}, Lemma \ref{lem:birthrate}, and the proof of Lemma \ref{lem:stimeinf}. The following is the idea. 

Suppose we are given a trail corresponding to a double-lollipop with word $w$. Then the length of $w$ is $i+j+k\le 3L$. We define a pre-lollipop to be a trail (or a collection of two trails) that is obtained by pre-multiplying a transposition with the trail of the double-lollipop. Suppose we show, with high probability, that there is $O(\log n)$ many pre-lollipops that ever exist during time $[0, S_0]$, independent of $n$. Then, only $O(\log n)$ many vertices are included in these pre-lollipops. Thus, if a double-lollipop ever has to occur during time $[0, S_0]$, the required transposition must involve two vertices from these pre-lollipops. But, this is highly unlikely, since each transposition occurs with probability $2/n^2$ and only a Poisson with mean $n S_0$ ever transpositions occur during $[0, S_0]$.

  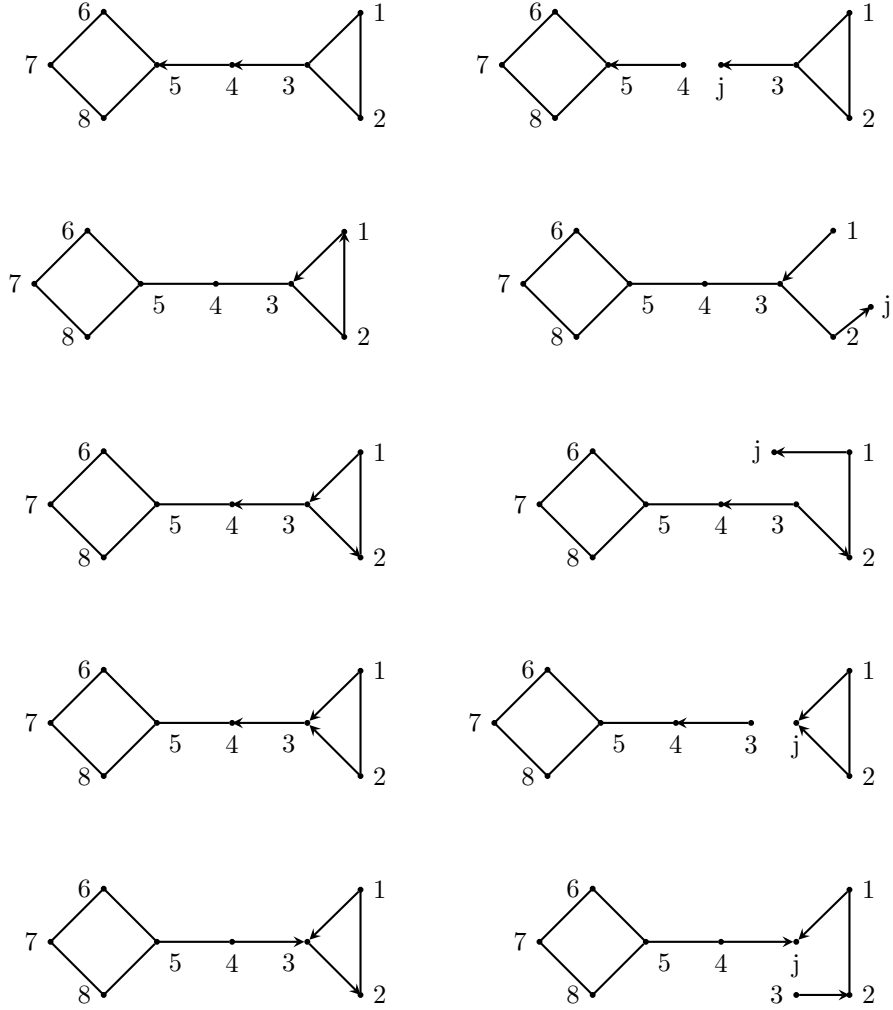
\begin{figure}[t]
      \begin{center}
       \begin{tikzpicture}[scale=1.0,vert/.style={circle,fill,inner sep=0,
              minimum size=0.05cm,draw},>=stealth]
            \draw[thick] (0,0) node[vert,label=below left:3](s0) {}
                  -- ++ (0.707cm, -0.707cm) node[vert,label=right:2](s1) {}
                  -- ++ (0,1.4) node[vert,label=right:1](s2) {} 
                  -- (s0) {};
                  \draw[thick, ->](s0) -- ++(-1, 0 cm) node[vert,label=below:4](s4) {};
                  
                 \draw[thick, ->] (s4) -- ++(-1, 0 cm) node[vert,label=below right:5](s5) {};

                 \draw[thick] (s5) -- ++(-0.707cm,0.707cm) node[vert,label=left:6](s6) {}
                  -- ++(-0.707cm,-0.707cm) node[vert,label=left:7](s7) {}
                  -- ++(0.707cm,-0.707cm) node[vert,label=left:8](s8) {}
                  --(s5);

                 \pgftransformxshift{6.5cm} 
                  \draw[thick] (0,0) node[vert,label=below left:3](s0) {}
                  -- ++ (0.707cm, -0.707cm) node[vert,label=right:2](s1) {}
                  -- ++ (0,1.4) node[vert,label=right:1](s2) {} 
                  -- (s0) {};
                  \draw[thick, ->](s0) -- ++(-1, 0 cm) node[vert,label=below:j](s4) {};
                  
                  \draw[thick] (-1.5,0) node[vert,label=below:4](s9) {};
                 \draw[thick, ->] (s9) -- ++(-1, 0 cm) node[vert,label=below right:5](s5) {};

                 \draw[thick] (s5) -- ++(-0.707cm,0.707cm) node[vert,label=left:6](s6) {}
                  -- ++(-0.707cm,-0.707cm) node[vert,label=left:7](s7) {}
                  -- ++(0.707cm,-0.707cm) node[vert,label=left:8](s8) {}
                  --(s5);

        \end{tikzpicture}
        
        \vspace{1cm}
        
       \begin{tikzpicture}[scale=1.0,vert/.style={circle,fill,inner sep=0,
              minimum size=0.05cm,draw},>=stealth]
            \draw[thick] (0,0) node[vert,label=below left:3](s0) {}
                  -- ++ (0.707cm, -0.707cm) node[vert,label=right:2](s1) {};
                  \draw[thick,->] (s1)-- ++ (0,1.4) node[vert,label=right:1](s2) {}; 
                  \draw[thick,->] (s2) -- (s0) {};
                  \draw[thick] (s0) -- ++(-1, 0 cm) node[vert,label=below:4](s4) {}
                  
              -- ++(-1, 0 cm) node[vert,label=below right:5](s5) {}

                -- ++(-0.707cm,0.707cm) node[vert,label=left:6](s6) {}
                  -- ++(-0.707cm,-0.707cm) node[vert,label=left:7](s7) {}
                  -- ++(0.707cm,-0.707cm) node[vert,label=left:8](s8) {}
                  --(s5);

                 \pgftransformxshift{6.5cm} 
                 \draw[thick] (0,0) node[vert,label=below left:3](s0) {}
                  -- ++ (0.707cm, -0.707cm) node[vert,label=right:2](s1) {};
                  \draw[thick,->] (s1)-- ++ (.5,.4) node[vert,label=right:j](s2) {}; 
                  
                  \draw[thick] (0.707cm,0.707cm) node[vert, label=right:1](s9) {};
                  \draw[thick,->] (s9) -- (s0) {};
                  \draw[thick] (s0) -- ++(-1, 0 cm) node[vert,label=below:4](s4) {}
                  
              -- ++(-1, 0 cm) node[vert,label=below right:5](s5) {}

                -- ++(-0.707cm,0.707cm) node[vert,label=left:6](s6) {}
                  -- ++(-0.707cm,-0.707cm) node[vert,label=left:7](s7) {}
                  -- ++(0.707cm,-0.707cm) node[vert,label=left:8](s8) {}
                  --(s5);

        \end{tikzpicture}
  
        \vspace{1cm}
        
        \begin{tikzpicture}[scale=1.0,vert/.style={circle,fill,inner sep=0,
              minimum size=0.05cm,draw},>=stealth]
            \draw[thick] (0,0) node[vert,label=below left:3](s0) {};
                  \draw[thick,->] (s0)-- ++ (0.707cm, -0.707cm) node[vert,label=right:2](s1) {};
                  \draw[thick] (s1)-- ++ (0,1.4) node[vert,label=right:1](s2) {}; 
                  \draw[thick,->] (s2) -- (s0) {};
                  \draw[thick, ->] (s0) -- ++(-1, 0 cm) node[vert,label=below:4](s4) {};
                  
                 \draw[thick] (s4)-- ++(-1, 0 cm) node[vert,label=below right:5](s5) {}

                -- ++(-0.707cm,0.707cm) node[vert,label=left:6](s6) {}
                  -- ++(-0.707cm,-0.707cm) node[vert,label=left:7](s7) {}
                  -- ++(0.707cm,-0.707cm) node[vert,label=left:8](s8) {}
                  --(s5);

                 \pgftransformxshift{6.5cm} 
             \draw[thick] (0,0) node[vert,label=below left:3](s0) {};
                  \draw[thick,->] (s0)-- ++ (0.707cm, -0.707cm) node[vert,label=right:2](s1) {};
                  \draw[thick] (s1)-- ++ (0,1.4) node[vert,label=right:1](s2) {};
                  \draw[thick,->] (s2)--++(-1,0) node[vert,label=left:j](s9) {};
                  \draw[thick,->] (s0) -- ++(-1, 0 cm) node[vert,label=below:4](s4) {};
                  
                  \draw[thick] (s4)-- ++(-1, 0 cm) node[vert,label=below right:5](s5) {}

                -- ++(-0.707cm,0.707cm) node[vert,label=left:6](s6) {}
                  -- ++(-0.707cm,-0.707cm) node[vert,label=left:7](s7) {}
                  -- ++(0.707cm,-0.707cm) node[vert,label=left:8](s8) {}
                  --(s5);
        \end{tikzpicture}

        \vspace{1cm}
        
           \begin{tikzpicture}[scale=1.0,vert/.style={circle,fill,inner sep=0,
              minimum size=0.05cm,draw},>=stealth]
            \draw[thick] (0,0) node[vert,label=below left:3](s0) {};
                  \draw[thick,<-] (s0)-- ++ (0.707cm, -0.707cm) node[vert,label=right:2](s1) {};
                  \draw[thick] (s1)-- ++ (0,1.4) node[vert,label=right:1](s2) {}; 
                  \draw[thick,->] (s2) -- (s0) {};
                  \draw[thick, ->] (s0) -- ++(-1, 0 cm) node[vert,label=below:4](s4) {};
                  
                 \draw[thick] (s4)-- ++(-1, 0 cm) node[vert,label=below right:5](s5) {}

                -- ++(-0.707cm,0.707cm) node[vert,label=left:6](s6) {}
                  -- ++(-0.707cm,-0.707cm) node[vert,label=left:7](s7) {}
                  -- ++(0.707cm,-0.707cm) node[vert,label=left:8](s8) {}
                  --(s5);

                 \pgftransformxshift{6.5cm} 
    \draw[thick] (0,0) node[vert,label=below:j](s0) {};
                  \draw[thick,<-] (s0)-- ++ (0.707cm, -0.707cm) node[vert,label=right:2](s1) {};
                  \draw[thick] (s1)-- ++ (0,1.4) node[vert,label=right:1](s2) {}; 
                  \draw[thick,->] (s2) -- (s0) {};
                 
                 \draw[thick] (-.6,0) node[vert,label=below:3] (s9) {};
                  \draw[thick, ->] (s9) -- ++(-1, 0 cm) node[vert,label=below:4](s4) {};
                  
                 \draw[thick] (s4)-- ++(-1, 0 cm) node[vert,label=below right:5](s5) {}

                -- ++(-0.707cm,0.707cm) node[vert,label=left:6](s6) {}
                  -- ++(-0.707cm,-0.707cm) node[vert,label=left:7](s7) {}
                  -- ++(0.707cm,-0.707cm) node[vert,label=left:8](s8) {}
                  --(s5);
                \end{tikzpicture}

	\vspace{1cm}
	
          \begin{tikzpicture}[scale=1.0,vert/.style={circle,fill,inner sep=0,
              minimum size=0.05cm,draw},>=stealth]
            \draw[thick] (0,0) node[vert,label=below left:3](s0) {};
                  \draw[thick,->] (s0)-- ++ (0.707cm, -0.707cm) node[vert,label=right:2](s1) {};
                  \draw[thick] (s1)-- ++ (0,1.4) node[vert,label=right:1](s2) {}; 
                  \draw[thick,->] (s2) -- (s0) {};
                  \draw[thick, <-] (s0) -- ++(-1, 0 cm) node[vert,label=below:4](s4) {};
                  
                 \draw[thick] (s4)-- ++(-1, 0 cm) node[vert,label=below right:5](s5) {}

                -- ++(-0.707cm,0.707cm) node[vert,label=left:6](s6) {}
                  -- ++(-0.707cm,-0.707cm) node[vert,label=left:7](s7) {}
                  -- ++(0.707cm,-0.707cm) node[vert,label=left:8](s8) {}
                  --(s5);

                 \pgftransformxshift{6.5cm} 
                \draw[thick] (0,0) node[vert,label=below:j](s0) {};
                \draw[thick] (0,-.707cm) node[vert,label=left:3] (s9) {};
                  \draw[thick,->] (s9) -- (0.707cm, -0.707cm) node[vert,label=right:2](s1) {};
                  \draw[thick] (s1)-- ++ (0,1.4) node[vert,label=right:1](s2) {}; 
                  \draw[thick,->] (s2) -- (s0) {};
                  \draw[thick, <-] (s0) -- ++(-1, 0 cm) node[vert,label=below:4](s4) {};
                  
                 \draw[thick] (s4)-- ++(-1, 0 cm) node[vert,label=below right:5](s5) {}

                -- ++(-0.707cm,0.707cm) node[vert,label=left:6](s6) {}
                  -- ++(-0.707cm,-0.707cm) node[vert,label=left:7](s7) {}
                  -- ++(0.707cm,-0.707cm) node[vert,label=left:8](s8) {}
                  --(s5);
                \end{tikzpicture}

      \end{center}
      \caption{Distinct classes of pre-lollipops for the double-lollipop in Figure \ref{fig:dlolly}. Arrows represent the signs of the letters on the edges, and are only shown on the edges incident on the vertex that is relevant for the transposition. The vertex $j$ refers to the other label in the transposition and is assumed to be not a vertex in the walk itself. In each case the resulting collection of trails is contained in the neighborhoods of one or more cycles. In all other cases either the graph does not change, except for vertex labels, or the pre-lollypop is similar to one above by symmetry.}
      \label{fig:prelolly}
    \end{figure}

Consider the stopping time $\stime$ similar to Lemma \ref{lem:stimeinf}, except in \eqref{eq:defnst} we consider short cycles to be those with size at most $L$. We start by showing that the number of pre-lollipops born during $[0, \stime)$ is of order $\log n$. Fix a double-lollipop and imagine the different pre-lollipops that are possible and transpositions $[x,y]$ which convert a pre-lollipop to the double lollipop. 
There are two possible cases: $x$ can be either a vertex of a cycle in the double-lollipop, or a vertex in the path connecting two cycles.

In the first case, the resulting graph is contained in the $L$ neighborhood of the other cycle. In the second case, we might get two subgraphs each contained in the $L$-neighborhood of one of the cycles. See Figure \ref{fig:prelolly} for a list of possibilities. Until $\stime$ there are at most $O\left(\sqrt{\log n}\right)$ many cycles of length at most $L$ that ever exist during $[0, S_0]$ with high probability. Thus, the total number of vertices contained in the $L$ neighborhood of these cycles is also $O\left( \sqrt{\log n} \right)$, where the big-$O$ involves a constant that depends on $d$ and $L$. Since pre-lollipops can only get born if the transposition involves vertices from these neighborhoods, there can be at most $O(\log n)$ many such pre-lollipops that ever exists during $[0, \stime)$ with high probability. The rest of the argument is taken care of as in Lemma \ref{lem:stimeinf} and the discussion in the previous paragraph. 
\end{proof}

\begin{lemma}\label{lem:badcnbwshrink} For any $T_0,S_0>0,$ $K\in\NN$ and any $\epsilon >0$, for all $T$ large enough we have
\[
\begin{split}
 \P&\left(  G(t,s)\; \text{is $(K,K)$ tangle free for all $(t,s)\in R_0$}\right) \ge 1-\e ,
\end{split}
\]
where 
$R_0:=[T-T_0, T] \times [0, S_0].$
\end{lemma}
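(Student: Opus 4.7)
The plan is to bootstrap from Lemma \ref{lem:tanglefreedimt} applied at a larger threshold $L \gg K$ via a halving-chain tail bound in the spirit of Lemma \ref{lem:existence}. Under the backward CRP on $G(T,s)$ using the vertex ordering \eqref{eq:orderremoval}, each vertex removal contracts the two incident edges sharing a common letter, so the combinatorial type of any tangle is preserved while its cycles and connecting paths merely shorten. Consequently, every $K$-tangle appearing in $G(t,s)$ for $(t,s)\in R_0$ (two cycles of length $\le K$ at graph distance $\le K$, either disjoint or sharing vertices) is the image of some tangle in $G(T,s)$ whose total edge-length $\ell$ is at least as large.

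First I would fix $L$, to be chosen later, and invoke Lemma \ref{lem:tanglefreedimt} with parameter $L$: for $T$ large enough, the event $A = \{G(T,s) \text{ is } (L,L)\text{-tangle free for every } s \in [0,S_0]\}$ has probability $\ge 1 - \epsilon/2$. The main step is then to show that, outside a second event of probability $\le \epsilon/2$, every $K$-tangle at any $(t,s)\in R_0$ pulls back to a tangle of total length $\le L$ in $G(T,s)$. Encoding an ancestor tangle by its multi-word label (two cyclically reduced cycle words plus one reduced path word of total length $\ell$), the reverse CRP acts on this label as independent halvings of double-letter positions, exactly as in Definition \ref{defn:mconwords}. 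Shrinking from length $\ell$ down to at most $3K$ in a dimension window of length $T_0$ requires at least $\ell - 3K$ halvings, which occurs with probability at most $(1 - e^{-T_0})^{\ell - 3K}$. The expected number of ancestor tangles of length $\ell$ ever present at dimension $T$ during $s \in [0,S_0]$ is polynomial in $\ell$ of degree depending only on $K$, by an extension of the pre-cycle counting in Lemma \ref{lem:precyclecount} and the birth-rate estimate \eqref{eq:rateborn} from cycles to tangle-structures (this extension was already implicit in the pre-lollipop analysis of the proof of Lemma \ref{lem:tanglefreedimt}). A union bound over $\ell$ gives a summable tail
\[
C \sum_{\ell > L} \ell^{c(K)} (1 - e^{-T_0})^{\ell - 3K},
\]
which can be made $\le \epsilon/2$ by choosing $L = L(\epsilon, T_0, S_0, d, K)$ large enough.

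On the intersection of the two good events (probability $\ge 1-\epsilon$), a $K$-tangle in some $G(t,s)$, $(t,s)\in R_0$, would lift to a tangle in $G(T,s)$ of total length $\le L$, giving two cycles of length $\le L$ at graph distance $\le L$ in $G(T,s)$ and thus contradicting event $A$. Hence no $K$-tangle appears in any $G(t,s)$, $(t,s)\in R_0$, which is precisely the $(K,K)$-tangle-free property we want. The main obstacle is the bookkeeping in the ancestor-counting step, namely verifying that the reverse CRP acts on the multi-word label of a tangle as independent halvings and obtaining the polynomial-in-$\ell$ bound on the expected number of length-$\ell$ ancestor tangles; both are direct extensions of arguments already carried out for single cycles in Lemma \ref{lem:existence} and for single double-lollipops in Lemma \ref{lem:tanglefreedimt}.
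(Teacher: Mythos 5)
Your proposal is correct and follows essentially the same strategy as the paper's proof: invoke Lemma~\ref{lem:tanglefreedimt} at a larger scale $L\gg K$, then use a halving-chain tail estimate of the form in \eqref{eq:thirdunion} (modeled on Lemma~\ref{lem:existence}) together with an ancestor-counting bound as in Lemma~\ref{lem:howmnycyc} to show that, outside an $\epsilon/2$-event, a $K$-tangle at $(t,s)\in R_0$ lifts to a tangle of total length at most $L$ at dimension $T$, contradicting $(L,L)$-tangle-freeness. The one place you are slightly terser than the paper is the reduction to a double-lollipop encoding: the paper first uses the shrinking bound to force the lifted cycles to have size $\le L$ and then uses $(L,L)$-tangle-freeness to conclude they are disjoint (hence joined by a genuine path, so the multi-word label is two cycle words plus one path word), whereas you invoke the double-lollipop encoding directly while having allowed "either disjoint or sharing vertices" earlier; this is harmless since the shared-vertex case produces an ancestor of characteristic $\ge 1$ that is ruled out on event $A$ by the same mechanism, but it is worth making the reduction explicit.
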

\begin{proof} This proof is very similar to the arguments in Section \ref{sec:halvingconv}. 
If some $G(t,s)$ is not $(K,K)$ tangle free then there exists two cycles $\mathcal{C}_1$ and $\mathcal{C}_2$ of size at most $K$ which are at distance at most $K$. Moving forward in dimension, these cycles grow into an overlap of cycles $\tilde {\mathcal{C}}_1$,$\tilde {\mathcal{C}}_2$.
  By \eqref{eq:thirdunion} there exists $L$ such that with probability at least $1-\e/8$ neither $\tilde {\mathcal{C}}_1$ or $\tilde {\mathcal{C}}_2$ contains a cycle of size bigger than $L$. Now Lemma \ref{lem:tanglefreedimt} then implies that with probability at least $1-\e/8$, $\tilde {\mathcal{C}}_1$ and $\tilde {\mathcal{C}}_2$ should contain exactly one cycle each and also should be disjoint. This is possible only if $C_1$ and $C_2$ were disjoint and had a path $\mathcal{P}$ joining them of length at most $K$ in $G(t,s).$ Now growing in dimension $\mathcal{P}$ will grow in to $\tilde{\mathcal{P}}$. Now some of the vertices on $\tilde{\mathcal{P}}$ might be repeated and also be the same as some of the vertices on either $\tilde{ \mathcal{C}}_1$ or $\tilde{ \mathcal{C}}_2$. However it is clear that one gets a double-lollipop at dimension $T$ with two cycles $\tilde {\mathcal{C}}_2$ and $\tilde{ \mathcal{C}}_2$ and a path $ {\mathcal{P}}_*$ joining them of length say $\ell.$ Since $\mathcal{P}$ had length at most $K$ it is clear that ${\mathcal{P}}_*$ has at least $\ell-K$ shrinking vertices. Also both 
 $\tilde{\mathcal{C}}_1$ and $\tilde{\mathcal{C}}_2$ have at least $|\tilde{\mathcal{C}}_1|-K$ and $|\tilde {\mathcal{C}}_2|-K$ shrinking vertices respectively. Again by Lemma \ref{lem:tanglefreedimt} with probability at least $1-\e/8$, we must have $\ell >L$. 
 
Thus, we have shown that
 \[
 \begin{split}
P&\left( \text{$G(t,s)$ is not $(K,K)$ tangle free}\right) - \e/2 \\
&\le P\left( \text{$\exists\;$ a lollipop of length $\ell>L$  with at 
least $\ell-3K$ shrinking vertices} \right).
\end{split}
\]
Define the size of double-lollipop to be the length of the word associated to it. 


Clearly now it suffices to show that there exists  an $L$, such that the probability that any of the possible double-lollipops of size $l > L$ shrinks to a double-lollipop of size at most $K$ by dimension $T-T_0$ is bounded above by $\epsilon/4$. 

Since the argument is very similar to that in Section \ref{sec:halvingconv}, we simply point out the major differences. Consider \eqref{eq:firstunion}. We will modify it so that $E_T(L)$ and $F(w,I)$ will refer to double-lollipops and not cycles. In \eqref{eq:secondunion}, we modify $G_w$ to refer to the number of double-lollipops that ever get created at dimension $T$ during time $[0,S_0]$. Now similar arguments as in Lemma \ref{lem:howmnycyc} gives us that $$\E(G_w)\le c_0 S_0 |w|,$$ where $c_0$ is some absolute constant. 
Equation \eqref{candidate} adapted to this setting now shows that the number of possible $w's$ of length $l$ which can shrink to a double-lollipop of size $k$ is at most $l^k(2d)^k.$ 

Thus by application of union bound similar to \eqref{eq:thirdunion} we are done. 
\end{proof}

\begin{proof}[Proof of Proposition \ref{prop:nobadwalk}] Follows from Lemmas \ref{lem:tanglecnbw} and \ref{lem:badcnbwshrink}.
\end{proof}
\begin{proof}[Proof of Theorem \ref{prop:eigenvalues}] This is almost identical to the proof of Theorem 5 in \cite{JP14}. We provide the sketch below.  Since by Proposition \ref{prop:nobadwalk} with probability approaching one, all CNBW's are repeated cycles, their weak limits can be computed from one another. Formally for any positive integer $K$, as $T \to \infty,$
\begin{equation}\label{wl1}
\{\mathrm{CNBW}_{k}(T+\cdot,\cdot):1\le k\le K\}\stackrel{weakly}{\rightarrow} \{\sum_{j\mid k} 2j N_j(\cdot,\cdot):1\le k\le K\}.
\end{equation}
The above follows from Theorem \ref{prop:weakconv}, Proposition \ref{prop:nobadwalk}, and  easy applications of Continuous Mapping Theorem and  Slutsky's theorem.
  Using  \eqref{eq:eigencnbw} and \eqref{wl1}  the polynomials $f_k$ are obtained from $\Gamma_k$ by expressing $N_k(\cdot,\cdot)$ as linear combinations of  $\sum_{j\mid \ell} 2j N_j(\cdot,\cdot).$ This is done using the M\"obius inversion formula. The inversion formula and  
the polynomial basis referred to is explicitly evaluated in \cite[eqn. (16)]{JP14}. 
\end{proof}

\begin{proof}[Proof of Theorem \ref{prop:chebycov}]
 By Proposition \ref{prop:nobadwalk}
\[
2 \tr T_k\left( G(\infty+t, s)  \right)= (2d-1)^{-k/2} \sum_{j\mid k} 2j N_j(t,s). 
\]
Taking expectations on both sides, we get
\[
2 \E \left[  \tr T_k\left( G(\infty+t, s)  \right)   \right]= (2d-1)^{-k/2} \sum_{j\mid k} a(d,j). 
\]

As $d$ tends to infinity, from Theorem \ref{prop:covariance}, only the $N_k$ term in the difference survives. Thus, the field \eqref{eq:chebyfield} has the same weak limit as one half of the field \eqref{eq:scaledfield}, which is $U/2$. This completes the proof of the proposition.  
\end{proof}

\bibliographystyle{alpha}
\bibliography{GFF}

\end{document}